 \newtheorem{theorem}{Theorem}[section]
 \newtheorem{corollary}[theorem]{Corollary}
 \newtheorem{lemma}[theorem]{Lemma}
 \newtheorem{proposition}[theorem]{Proposition}
\theoremstyle{definition}
\newtheorem{definition}[theorem]{Definition}
\theoremstyle{remark}
\newtheorem{fact*}{Fact}
\DeclareMathOperator{\RE}{Re}
\DeclareMathOperator{\IM}{Im}
\newcommand\dd{\mathrm d}
\newcommand{\hilbert}{\mathcal{H}}
\newcommand{\B}{\mathcal{B}}
\newcommand{\vecspace}{\mathcal{V}}
\newcommand{\K}{\mathcal{K}}
\newcommand{\M}{\mathcal{M}}
\newcommand{\N}{\mathcal{N}}
\newcommand{\D}{\mathbb{D}}
\newcommand{\C}{\mathbb{C}}
\newcommand{\R}{\mathbb{R}}
\newcommand{\cc}[1]{\overline{#1}}
\newcommand{\abs}[1]{\left\vert#1\right\vert}
\newcommand{\norm}[1]{\left\Vert#1\right\Vert}
\newcommand{\ran}[1]{\operatorname{ran}#1}
\newcommand{\ip}[2]{\left\langle #1, #2 \right\rangle}
\newcommand{\ad}{^\ast}
\newcommand{\inv}{^{-1}}
\newcommand{\til}{\raise.17ex\hbox{$\scriptstyle\mathtt{\sim}$}}
\newcommand\Pick{\mathcal P}
\newcommand{\ph}{\varphi}
\newcommand\la{\lambda}
\newcommand\beq{\begin{equation}}
\newcommand\eeq{\end{equation}}
\newcommand\RR{\mathcal{R}}
\newcommand{\vectwo}[2]
{
   \begin{pmatrix} #1 \\ #2 \end{pmatrix}
}
\newcommand{\bbm}{\left[ \begin{smallmatrix}}
\newcommand{\ebm}{\end{smallmatrix} \right]}
\newcommand{\bpm}{\left( \begin{smallmatrix}}
\newcommand{\epm}{\end{smallmatrix} \right)}
\numberwithin{equation}{section}
\newcommand{\tensor}[2]{\text{ }{\begin{smallmatrix} #1 \\ \otimes\\ #2\end{smallmatrix}}\text{  }}
\newcommand{\flattensor}[2]{#1 \otimes #2}
\newcommand{\bigtensor}[2]{\text{ }\begin{matrix} #1 \\{\otimes}\\ #2\end{matrix}\text{  }}
\newlength{\Mheight}
\newlength{\cwidth}
\newcommand{\mc}{\settoheight{\Mheight}{M}\settowidth{\cwidth}{c}M\parbox[b][\Mheight][t]{\cwidth}{c}}
\title[Free Pick functions]{Free Pick functions: Representations, asymptotic behavior and matrix monotonicity in several noncommuting variables}
\author{
J. E. Pascoe \\ \\
Ryan Tully-Doyle
}
\date{\today}
\begin{document}

\begin{abstract}
	We extend the study of the Pick class, the set of complex analytic
	functions taking the upper half plane into itself,
	to the noncommutative setting. R. Nevanlinna showed that elements of the Pick class have
	certain integral representations which reflect their asymptotic behavior at infinity.
	L\"owner connected the Pick class to matrix monotone functions.
	We generalize the Nevanlinna representation theorems and L\"owner's theorem on
	matrix monotone functions to the \emph{free Pick class}, the collection of functions that map tuples of matrices with positive imaginary
	part into the matrices with positive imaginary part which
	obey the free functional calculus.
	
\end{abstract}
\maketitle

\tableofcontents

\section{Introduction}

Let $\mathbb{H}\subset \mathbb{C}$ denote the complex upper half plane. That is,
	$$\mathbb{H} = \{z \in \mathbb{C} | \IM z > 0 \}.$$
The \emph{Pick class} is the set of analytic functions $f: \mathbb{H} \rightarrow \mathbb{H}.$ The elements of the Pick class
are called \emph{Pick functions.}

Rolf Nevanlinna showed that a subset of the Pick class satisfying an
asymptotic condition at infinity is exactly parametrized by positive Borel measures on the real line.
	\begin{theorem}[R. Nevanlinna \cite{nev22}] \label{NevanlinnaRep}
	Let $h: \mathbb{H} \to \C$. There exists a finite Borel positive measure $\mu$ on $\R$ such that 
	\beq\label{NevanlinnaRepFormula}
	h(z) = \int\!\frac{1}{t-z}d\mu(t)
	\eeq
	if and only if $h$ is in the Pick class and
	\beq\label{NevanlinnaRepFormulaAsy}
	\liminf_{s\to\infty} s\abs{h(is)} < \infty.
	\eeq
	Moreover, for any Pick function $h$ satisfying Equation \ref{NevanlinnaRepFormulaAsy}
	the measure $\mu$ in Equation \ref{NevanlinnaRepFormula} is uniquely determined.
	\end{theorem}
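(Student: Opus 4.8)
The plan is to establish the two implications separately and then treat uniqueness, with essentially all of the content concentrated in recovering $\mu$ from $h$.

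The implication ``\eqref{NevanlinnaRepFormula} $\Rightarrow$ Pick class $+$ \eqref{NevanlinnaRepFormulaAsy}'' is a routine estimate, which I would do first. For $z=x+\ii y$ with $y>0$ one has $\IM\,(t-z)^{-1}=y/((t-x)^2+y^2)\ge 0$, so $\IM h(z)=\int y\,\abs{t-z}^{-2}\,d\mu(t)\ge 0$; differentiation under the integral sign is legitimate since $\mu$ is finite and the integrand is bounded uniformly in $t$ over compact subsets of $\mathbb{H}$, so $h$ is analytic and hence (the degenerate $\mu=0$ aside) a Pick function. For the growth, $s\,\IM h(\ii s)=\int s^2(t^2+s^2)^{-1}\,d\mu(t)$ increases to $\mu(\R)<\infty$ as $s\to\infty$ by monotone convergence, whence $\liminf_{s\to\infty}s\,\abs{h(\ii s)}\le\mu(\R)<\infty$; in fact $s\,\abs{h(\ii s)}\to\mu(\R)$, a fact I would reuse for uniqueness.

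For the converse, the hard part --- indeed the only genuinely nontrivial ingredient --- will be the classical Herglotz--Nevanlinna representation of an \emph{arbitrary} Pick function. Since $u\df\IM h$ is a nonnegative harmonic function on $\mathbb{H}$, the Herglotz representation of positive harmonic functions on a half plane (obtained by transporting the Riesz--Herglotz theorem on the disk through a Cayley transform) supplies $c\ge 0$ and a positive Borel measure $\nu$ on $\R$ with $\int(1+t^2)^{-1}\,d\nu(t)<\infty$ such that $u(x+\ii y)=cy+\int y\,((t-x)^2+y^2)^{-1}\,d\nu(t)$. The holomorphic function $g(z)=cz+\int\big((t-z)^{-1}-t(1+t^2)^{-1}\big)\,d\nu(t)$ --- the subtracted term being precisely what makes the integral converge for such $\nu$ --- has $\IM g=u$, so $h-g$ is holomorphic with vanishing imaginary part, hence equal to a real constant $a$, and therefore
\beq
h(z)=a+cz+\int\Big(\frac{1}{t-z}-\frac{t}{1+t^2}\Big)\,d\nu(t).
\eeq
The delicate point here is the correct handling of the boundary point at infinity, which is the source of the affine term $cz$ (an ``atom at $\infty$''); past this, everything is bookkeeping.

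It then remains to feed \eqref{NevanlinnaRepFormulaAsy} into this representation. Taking imaginary parts at $z=\ii s$ gives $s\,\IM h(\ii s)=cs^2+\int s^2(t^2+s^2)^{-1}\,d\nu(t)$, which is nondecreasing in $s$; since $s\,\abs{h(\ii s)}\ge s\,\IM h(\ii s)$ has finite $\liminf$, this quantity stays bounded, forcing $c=0$ and $\nu(\R)=\lim_{s\to\infty}\int s^2(t^2+s^2)^{-1}\,d\nu(t)<\infty$. With $\nu$ finite, $t\mapsto t(1+t^2)^{-1}$ is bounded and $\nu$-integrable, so absorbing its integral into $a$ yields $h(z)=a'+\int(t-z)^{-1}\,d\nu(t)$ with $a'\in\R$; examining the real part, $\abs{\RE h(\ii s)-a'}=\abs{\int t(t^2+s^2)^{-1}\,d\nu(t)}\le\nu(\R)/(2s)\to 0$, so $s\,\abs{\RE h(\ii s)}\to\infty$ unless $a'=0$, and once more $s\,\abs{h(\ii s)}\ge s\,\abs{\RE h(\ii s)}$ forces $a'=0$, giving \eqref{NevanlinnaRepFormula} with $\mu\df\nu$ a finite positive measure. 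Finally, uniqueness I would obtain from the Stieltjes--Perron inversion formula: for $a<b$ neither of which is an atom of $\mu$, Tonelli together with the elementary uniformly bounded limit $\tfrac1\pi\int_a^b\eps((t-x)^2+\eps^2)^{-1}\,dx\to\mathbf 1_{(a,b)}(t)$ gives $\tfrac1\pi\int_a^b\IM h(x+\ii\eps)\,dx\to\mu((a,b))$ by dominated convergence, so $h$ determines $\mu$ on an algebra generating the Borel $\sigma$-algebra, hence determines $\mu$.
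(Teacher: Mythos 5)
The paper cites this theorem to Nevanlinna without providing a proof, so there is no in-paper argument to compare against. Your proof is correct and follows the standard route: deduce the finite-measure representation from the general Herglotz--Nevanlinna representation $h(z)=a+cz+\int\big((t-z)^{-1}-t(1+t^2)^{-1}\big)\,d\nu(t)$ (obtained via Cayley transform from the Riesz--Herglotz theorem for positive harmonic functions on the disk), then use the asymptotic hypothesis and monotonicity of $s\,\IM h(\ii s)$ to force $c=0$ and $\nu(\R)<\infty$, absorb $\int t(1+t^2)^{-1}\,d\nu$ into the additive real constant, and finally force that constant to vanish by looking at $s\,\RE h(\ii s)$; uniqueness via Stieltjes--Perron inversion. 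Two cosmetic points: the collection of intervals with non-atomic endpoints is not an algebra but a $\pi$-system, which together with $\mu(\R)<\infty$ still determines $\mu$ by Dynkin's lemma (and is the usual phrasing); and you advertise reusing the limit $s\,\abs{h(\ii s)}\to\mu(\R)$ for uniqueness but in the end never need it, the inversion formula sufficing on its own.
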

The map taking a finite positive Borel measure $\mu$ to a Pick function $h$ via
the correspondence in Equation \eqref{NevanlinnaRepFormula} is called the \emph{Cauchy transform.}
A na\"ive interpretation of the asymptotic condition at infinity, Equation \eqref{NevanlinnaRepFormulaAsy}, is that the first residue exists. This was studied rigorously in the guise of a conformally equivalent condition on self maps of the unit disk by Julia \cite{ju20} and Carath\'eodory \cite{car29}. 

Nevanlinna applied Theorem \ref{NevanlinnaRep} to the Hamburger moment problem:
Given a sequence of real numbers $(\rho_i)^{\infty}_{i=0}$, when does there exist a
measure $\mu$ such that for each $i\in \mathbb N,$ $\rho_i$ is the $i$-th moment of the measure $\mu,$
i.e.,
	$$\rho_i = \int x^i d\mu?$$
\begin{theorem}[R. Nevanlinna \cite{nev22}] 
	Let $(\rho_i)^{\infty}_{i=1}$ be a sequence of real numbers. The following are equivalent.
	\begin{enumerate}
		\item There is a finite positive Borel measure $\mu$ on $\R$ so that, for each $i\in \mathbb N,$
			$$\rho_i = \int x^i d\mu.$$
		\item There is a Pick function $h$ such that, for every $N\in \mathbb N,$
			$$h(z) = \sum^{N}_{i=0} \frac{1}{z^{i+1}}\rho_i + O(\frac{1}{|z|^{N+2}}).$$
		\item The infinite matrix Hankel matrix
			$$A = [\rho_{i+j}]_{0 \leq i,j \leq \infty} = \bbm \rho_0 & \rho_1 & \rho_2 & \ldots\\
			\rho_1 & \rho_2 & \rho_3 & \ldots \\
			\rho_2 & \rho_3 & \rho_4 & \ldots \\
			\vdots & \vdots & \vdots & \ddots 
			\ebm$$
		is positive semidefinite in the sense that, for each in $N \in \mathbb N,$
		the truncated Hankel matrix $[\rho_{i+j}]_{0 \leq i,j\leq N}$ is positive semidefinite.	
	\end{enumerate}
\end{theorem}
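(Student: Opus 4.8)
The plan is to establish the four implications $(1)\Rightarrow(3)$, $(1)\Rightarrow(2)$, $(2)\Rightarrow(1)$, and $(3)\Rightarrow(1)$, which together close all the equivalences. Everything is classical, due to Hamburger and Nevanlinna; the substantive content is the passage between a moment sequence and the Cauchy transform of its representing measure, together with the nontrivial direction of Hamburger's theorem. The two short implications come first. For $(1)\Rightarrow(3)$: for every $N$ and every $c\in\C^{N+1}$,
\[
\sum_{0\le i,j\le N}c_i\,\overline{c_j}\,\rho_{i+j}=\int\abs{\sum_{i=0}^{N}c_i t^i}^{2}\,d\mu(t)\ge 0 .
\]
For $(1)\Rightarrow(2)$: given $\mu$ with $\rho_i=\int t^i\,d\mu(t)$, let $h$ be the Cauchy transform $h(z)=\int\frac{d\mu(t)}{t-z}$, which is a Pick function since $\IM\frac{1}{t-z}=\IM z/\abs{t-z}^{2}>0$ for $\IM z>0$, and which satisfies \eqref{NevanlinnaRepFormulaAsy}. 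Writing the kernel as a truncated geometric series,
\[
\frac{1}{t-z}=-\sum_{i=0}^{N}\frac{t^{i}}{z^{i+1}}-\frac{1}{z^{N+1}}\cdot\frac{t^{N+1}}{z-t},
\]
and integrating, $h(z)=-\sum_{i=0}^{N}\rho_i z^{-i-1}+R_N(z)$ with $\abs{R_N(z)}\le\abs{z}^{-N-1}\dist(z,\R)^{-1}\int\abs{t}^{N+1}\,d\mu(t)$; all absolute moments are finite because $\int t^{2k}\,d\mu=\rho_{2k}$ and $\int\abs{t}^{2k+1}\,d\mu\le\rho_{2k}^{1/2}\rho_{2k+2}^{1/2}$ by Cauchy--Schwarz, so $R_N(z)=O(\abs{z}^{-N-2})$ nontangentially, which is the expansion in $(2)$ up to the overall sign fixed by the convention in \eqref{NevanlinnaRepFormula}.

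For $(2)\Rightarrow(1)$: the $N=0$ case gives $\liminf_{s\to\infty}s\abs{h(is)}=\abs{\rho_0}<\infty$, so Theorem \ref{NevanlinnaRep} produces a unique finite positive Borel measure $\mu$ with $h(z)=\int\frac{d\mu(t)}{t-z}$, and it remains to identify the $\rho_i$ as its moments. This is a bootstrapping argument on the imaginary axis. From $-\RE\bigl(is\,h(is)\bigr)=\int\frac{s^{2}}{s^{2}+t^{2}}\,d\mu(t)$ and monotone convergence one gets $\mu(\R)=\rho_0$; comparing the next correction term with the given expansion gives $s^{2}\int\frac{t^{2}}{s^{2}+t^{2}}\,d\mu=O(1)$ as $s\to\infty$, and since $\frac{s^{2}t^{2}}{s^{2}+t^{2}}\uparrow t^{2}$, monotone convergence yields $\int t^{2}\,d\mu=\rho_2<\infty$; Cauchy--Schwarz then makes $\abs{t}$ integrable, so a dominated-convergence version of the same comparison gives $\int t\,d\mu=\rho_1$. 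Iterating --- each round using the expansion to one higher order, extracting the next even moment by monotone convergence and then the neighbouring odd moment by dominated convergence --- shows all moments of $\mu$ are finite and equal the $\rho_i$ (equivalently, once finiteness is known, apply $(1)\Rightarrow(2)$ to $\mu$ and invoke uniqueness of asymptotic-expansion coefficients).

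The remaining implication $(3)\Rightarrow(1)$ is Hamburger's theorem, and this is the step I expect to demand the most care. First dispose of the degenerate case: if the Hankel form $\langle p,q\rangle=\sum_{i,j}p_i q_j\rho_{i+j}$ on $\R[t]$ has a nonzero null vector $p$, then by Cauchy--Schwarz in this form the functional $L(t^i)=\rho_i$ annihilates the ideal generated by $p$, hence factors through a finite-dimensional quotient on which multiplication by $t$ is symmetric, and so $L$ is a positive combination of point masses at the real roots of $p$, giving $(1)$ directly. Otherwise the form is nondegenerate, every principal truncation $[\rho_{i+j}]_{0\le i,j\le N}$ is positive definite, and Gaussian quadrature (finite-dimensional spectral theory for the truncated Jacobi matrix) supplies an $(N+1)$-atom measure $\mu_N$ with $\int t^i\,d\mu_N=\rho_i$ for $0\le i\le 2N$. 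These $\mu_N$ have common mass $\rho_0$ and are uniformly tight, since $\mu_N(\{\abs{t}>R\})\le\rho_2/R^{2}$ once $N\ge 1$; Helly's selection theorem then gives a vaguely convergent subsequence $\mu_{N_k}\rightharpoonup\mu$ with $\mu(\R)=\rho_0$, and the same estimate with $\abs{t}^{2m}$ in place of $t^{2}$ furnishes uniform integrability of each power $t^i$ along the sequence, so $\int t^i\,d\mu=\lim_k\int t^i\,d\mu_{N_k}=\rho_i$. (Alternatively one attaches a self-adjoint extension $J$ of the semi-infinite Jacobi matrix to the sequence and sets $h(z)=\ip{(J-z)^{-1}\delta_0}{\delta_0}$, trading the Helly argument for the classical determinacy/self-adjointness analysis.)
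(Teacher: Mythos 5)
The paper does not prove this statement; it is attributed to Nevanlinna \cite{nev22} and serves only as classical background motivating the free version established later (Theorem \ref{heltonHankelX}). There is therefore no in-paper argument to compare against, and the only question is whether your proof is correct on its own terms.

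It is. The Gram-form computation for $(1)\Rightarrow(3)$, the truncated geometric-series expansion of the Cauchy transform together with the Cauchy--Schwarz bound $\int|t|^{2k+1}\,d\mu\le\rho_{2k}^{1/2}\rho_{2k+2}^{1/2}$ for $(1)\Rightarrow(2)$, the use of Theorem \ref{NevanlinnaRep} at order $N=0$ followed by the monotone/dominated-convergence bootstrap along the imaginary axis for $(2)\Rightarrow(1)$, and the Gaussian quadrature plus Helly selection plus uniform-integrability argument (with the degenerate Hankel case reduced to a finite quotient) for $(3)\Rightarrow(1)$ are all the standard and correct routes. Two small remarks. First, you are right to flag the sign mismatch: with $h(z)=\int(t-z)^{-1}\,d\mu$ as in Equation \eqref{NevanlinnaRepFormula}, the expansion reads $h(z)=-\sum_{i=0}^{N}\rho_i z^{-i-1}+O(|z|^{-N-2})$, so item (2) of the theorem as printed should carry a minus sign (equivalently, (2) as written describes $-h$, which is not a Pick function); your bootstrap in $(2)\Rightarrow(1)$ implicitly uses the corrected sign, which is the consistent reading. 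Second, in the degenerate case of $(3)\Rightarrow(1)$ you should take $p$ of \emph{minimal} degree among nonzero null vectors of the Hankel form: minimality is what makes the induced form on $\R[t]/(p)$ positive definite, so that multiplication by $t$ is genuinely self-adjoint there, its eigenvalues are real (they need not exhaust the roots of a non-minimal $p$), and the spectral weights are nonnegative. With that word inserted, the argument is airtight.
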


Pick functions also correspond to matrix monotone functions via L\"owner's theorem.
Given a function $f:(a,b) \rightarrow \mathbb{R},$ we extend $f$ via the functional calculus to self-adjoint matrices $A$
with spectrum in $(a,b)$ by taking the diagonalization of $A$ by a unitary matrix $U,$ that is,
	$$A = U^* \bbm \lambda_1 & & \\
	 & \lambda_2 & \\
	 & & \ddots \\
	\ebm U,$$
and defining
	\beq\label{BFC}
	f(A) = U^* \bbm f(\lambda_1) & & \\
	 & f(\lambda_2) & \\
	 & & \ddots \\
	\ebm U.
	\eeq
A function $f:(a,b) \rightarrow \mathbb{R}$ is called \emph{matrix monotone} if
	$$A \leq B \Rightarrow f(A) \leq f(B)$$
where $A \leq B$ means that $B-A$ is positive semidefinite.

The condition that a function $f:(a,b) \rightarrow \mathbb{R}$ be matrix monotone is much stronger than that
$f$ should be monotone in the ordinary sense. For example, let the function $f:\mathbb{R} \rightarrow \mathbb{R}$ be given by the formula
	$$f(x) = x^3.$$
The function $f$ is monotone on all of $\mathbb{R}.$
Note that
	$$\bpm 1 & 1 \\ 1 & 1 \epm \leq \bpm 2 & 1 \\ 1 & 1 \epm$$
since   $$\bpm 2 & 1 \\ 1 & 1 \epm - \bpm 1 & 1 \\ 1 & 1 \epm = \bpm 1 & 0 \\ 0 & 0 \epm$$ is a positive semidefinite matrix.
However, 
	$$\begin{matrix}
	f\bpm 1 & 1 \\ 1 & 1 \epm & = & \bpm 4 & 4 \\ 4 & 4 \epm \\
	f\bpm 2 & 1 \\ 1 & 1 \epm & = & \bpm 13 & 8 \\ 8 & 5 \epm \\
	\end{matrix}$$
and 
	$$\bpm 13 & 8 \\ 8 & 5 \epm - \bpm 4 & 4 \\ 4 & 4 \epm = \bpm 9 & 4 \\ 4 & 1 \epm$$
which is not positive semidefinite since $\text{det} \bpm 9 & 4 \\ 4 & 1 \epm = -5 < 0$, and so $f(x) = x^3$ is not matrix monotone even though it is monotone on all of $\R$.

In \cite{lo34}, Charles L\"owner showed the following theorem.
\begin{theorem}[L\"owner \cite{lo34}]\label{LownerOne}
Let $f: (a,b) \to \R$ be a bounded Borel function. If $f$ is matrix monotone, then $f$ analytically continues to the upper half plane
as a function in the Pick class.
\end{theorem}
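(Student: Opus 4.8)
The plan is to go through the classical equivalence \emph{``matrix monotone $\iff$ all Loewner matrices positive semidefinite''}, to recognize that this positivity is the real-boundary trace of the kernel condition defining the Pick class, and to make the continuation explicit by solving a moment problem. The target is to extract from a matrix monotone $f$ a finite positive Borel measure $\mu$ on $\R\setminus(a,b)$ together with real constants $\alpha$ and $\beta\ge 0$ so that
\[
  f(x)=\alpha+\beta x+\int_{\R\setminus(a,b)}\Big(\frac{1}{\la-x}-\frac{\la}{1+\la^{2}}\Big)\,d\mu(\la).
\]
With $x$ replaced by $z\in\mathbb H$, the right-hand side is analytic on $\mathbb H$ (its singularities lie in $\R\setminus(a,b)$), it restricts to $f$ on $(a,b)$, and since $\IM\frac{1}{\la-z}=\IM z/|\la-z|^{2}>0$ it carries $\mathbb H$ into $\overline{\mathbb H}$; unless $\beta=0$ and $\mu=0$ (the degenerate case $f\equiv\mathrm{const}$) it is nonconstant, hence a Pick function by the open mapping theorem.

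Producing $\mu$ has two parts. First, regularity: $1\times1$ monotonicity is ordinary monotonicity, so $f$ is nondecreasing (in particular automatically Borel) and differentiable almost everywhere, and a real-variable argument using $2\times2,3\times3,\dots$ monotonicity bootstraps this to $f\in C^{\infty}(a,b)$, by forcing the divided-difference kernel $(x,y)\mapsto(f(x)-f(y))/(x-y)$ to extend smoothly across the diagonal and iterating. Second, and this is the crux, one shows that for any distinct $x_{1},\dots,x_{n}\in(a,b)$ the Loewner matrix
\[
  L=\Big[\frac{f(x_{i})-f(x_{j})}{x_{i}-x_{j}}\Big]_{i,j=1}^{n}\qquad(\text{diagonal entries }f'(x_{i}))
\]
is positive semidefinite. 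Here the hypothesis enters: at $A=\diag(x_{1},\dots,x_{n})$ and a vector $v$, matrix monotonicity gives $f(A+tvv\ad)-f(A)\succeq0$ for all small $t>0$, so dividing by $t$ and letting $t\to0^{+}$ exhibits the Fr\'echet derivative $Df(A)[vv\ad]$ as a limit of positive semidefinite matrices; the Daleckii--Krein formula for the derivative of the functional calculus at a diagonal matrix identifies $Df(A)[H]$ with the Schur (Hadamard) product $L\circ H$, and the choice $v=(1,\dots,1)$ gives $L\circ vv\ad=L$, whence $L\succeq0$. (The regularity step is what makes $f'(x_{i})$ and this differentiation meaningful.)

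Finally I would convert $L\succeq0$ into the representation. Fixing a base point $x_{0}\in(a,b)$ and letting $x_{1},\dots,x_{n+1}\to x_{0}$, the Loewner matrices degenerate, up to congruence, to the Hankel matrices $\big[c_{i+j}\big]_{0\le i,j\le n}$ with $c_{k}=f^{(k+1)}(x_{0})/(k+1)!$, so positivity of \emph{all} Loewner matrices forces every such Hankel matrix to be positive semidefinite; by the Hamburger moment theorem quoted above (with a Hausdorff-type localization reflecting that the singularities avoid $(a,b)$) there is a finite positive measure whose moments are the $c_{k}$, and the attendant Cauchy-type bounds on the $c_{k}$ simultaneously promote $f$ from $C^{\infty}$ to real-analytic near $x_{0}$, so the Taylor series genuinely represents $f$ there; summing it and continuing analytically yields, after standard manipulations, the displayed integral representation on all of $(a,b)$, hence the extension to $\mathbb H$. (A variant of this last step replaces the moment problem by the solvability criterion for boundary Nevanlinna--Pick interpolation, of which $L\succeq0$ is exactly the hypothesis: interpolate $f$ by rational Pick functions on an increasing sequence of finite subsets of $(a,b)$, with the boundedness of $f$ furnishing the uniform bounds needed to pass to a locally uniform limit by Montel's theorem.) I expect the crux step to be the main obstacle: extracting Loewner-matrix positivity from matrix monotonicity requires controlling the Fr\'echet derivative of the matrix functional calculus along a rank-one perturbation, and this rests on the preliminary $C^{\infty}$ bootstrap; with those in hand, the passage through the moment problem to the integral representation is routine given the results already quoted.
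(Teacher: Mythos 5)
The paper does not supply a proof of Theorem~\ref{LownerOne}: it quotes it as L\"owner's 1934 theorem and refers to \cite{don74,bha97,bha07} for modern treatments, so there is no in-text proof to compare against. The nearest thing is the one-variable sketch in the introduction, but that is a proof sketch of Corollary~\ref{seriesCharacter} --- it starts from a function already assumed analytic with a power series on a neighborhood of the closed disk, and is organized around the Choi--Kraus theorem, the Hamburger model, and the theory of models precisely because that is the machinery that lifts to the noncommutative setting. Your proposal instead takes the classical Donoghue/Bhatia route: bootstrap regularity from bounded Borel to $C^\infty$, derive Loewner-matrix positivity from the Daleckii--Krein formula applied along a rank-one perturbation, take confluent limits of the Loewner matrices to get Hankel matrices of Taylor coefficients, and finish either by a moment problem or by boundary Nevanlinna--Pick interpolation plus Montel. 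That is genuinely different from the paper's machinery, and, unlike the paper's sketch, it actually engages the Borel hypothesis in the statement; what it does not buy you is a route that generalizes to the free setting, which is the whole point of the paper's choice.

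Two places in the outline need more than gesturing. First, the $C^\infty$ bootstrap is itself the hard part of the theorem as stated; it is a real theorem (several pages in Donoghue), and without it the Daleckii--Krein step is unavailable, so \emph{``a real-variable argument $\ldots$ bootstraps''} is a placeholder, not an argument. Second, Loewner positivity plus the Hamburger moment theorem do not by themselves give real-analyticity of $f$: the Hamburger measure can have unbounded support and the resulting Taylor series need not converge. What rescues the step is the boundedness of $f$ on $(a,b)$, which (via a M\"obius change of variable, or direct estimates on divided differences) forces a Stieltjes/Hausdorff-type localization that keeps the support of the measure off $(a,b)$ and gives the geometric growth of the moments needed to sum the series. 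You flag this only parenthetically, but it is precisely where the ``bounded'' hypothesis of Theorem~\ref{LownerOne} enters and should be made explicit. Modulo those two issues, the proposal is a correct outline of one of the standard classical proofs.
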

For a modern treatment of L\"owner's theorem, see e.g. \cite{don74,bha97,bha07}.

L\"owner's theorem can be used to identify whether or not many classically important functions are matrix monotone. For example,
$x^{1/3}, \log x,$ and $-\frac{1}{x}$ are matrix monotone on the interval $(1,2),$ but $x^3$ and $e^x$ are not.

Interpreting Theorem \ref{LownerOne} in the context of Nevanlinna's solution to the Hamburger moment problem, we obtain the following corollary, which will guide our study of matrix monotone functions in several variables.
\begin{corollary}\label{seriesCharacter}
Let  $f(x) = \sum^{\infty}_{i=0} a_ix^i$ be a power series which converges on a neighborhood of the closed disk $\overline{\mathbb{D}}$. The function $f$ is matrix monotone if and only if the infinite Hankel matrix
			$[a_{i+j+1}]_{0 \leq i,j \leq \infty}\geq 0.$
\end{corollary}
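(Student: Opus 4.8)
The plan is to reduce the statement, via L\"owner's theorem, to the classical integral representation of a matrix monotone function that is analytic across $(-1,1)$, and then to recognise the Hankel condition as the positivity of a moment matrix. Write $R>1$ for the radius of convergence of $\sum a_i x^i$. The object to isolate is the representation
\[
f(x)\;=\;a_0\;+\;\int_{[-1/R,\,1/R]}\frac{x}{1-tx}\,d\mu(t),
\]
where $\mu$ is a finite positive Borel measure. Since $\frac{x}{1-tx}=\sum_{k\ge 1}t^{k-1}x^k$ for $|tx|<1$, expanding and matching Taylor coefficients at $0$ shows this is equivalent to the moment identities $a_{k+1}=\int t^{k}\,d\mu(t)$ for all $k\ge 0$; consequently the Hankel matrix $[a_{i+j+1}]_{i,j\ge 0}$ is exactly the moment matrix $\bigl[\int t^{i+j}\,d\mu(t)\bigr]_{i,j\ge 0}$ of $\mu$. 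The corollary then follows from this representation together with the two reductions below, with $[-1/R,1/R]\subset(-1,1)$ making all convergence and pole issues harmless.

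For ``matrix monotone $\Rightarrow$ Hankel matrix $\ge 0$'': a power series converging on a neighbourhood of $\overline{\D}$ is bounded and real-analytic on $[-1,1]$, so Theorem \ref{LownerOne} applies and $f$ continues to a Pick function $h$ on $\mathbb{H}$; by uniqueness of analytic continuation $h=f$ on a complex neighbourhood of $[-1,1]$, so $h$ is analytic across $(-R,R)$. Feeding $h$ into the Herglotz--Nevanlinna representation $h(z)=\alpha+\beta z+\int(\tfrac{1}{t-z}-\tfrac{t}{1+t^{2}})\,d\sigma(t)$ (with $\beta\ge 0$, $\sigma\ge 0$; this is the general form behind Theorem \ref{NevanlinnaRep}), analyticity of $h$ across $(-R,R)$ forces $\operatorname{supp}\sigma\subseteq\{\,|t|\ge R\,\}$. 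Expanding the kernel at $z=0$, pushing $\sigma$ forward under the inversion $t\mapsto 1/t$, and recording the linear term $\beta z$ as an atom $\beta\delta_0$, one arrives at the displayed representation with $\mu$ finite, positive, supported in $[-1/R,1/R]$. Positivity of the Hankel matrix is then automatic: for any finitely supported $(c_i)_{i\ge0}$,
\[
\sum_{i,j} c_i\overline{c_j}\,a_{i+j+1}\;=\;\int\Bigl|\sum_i c_i t^{i}\Bigr|^{2}\,d\mu(t)\;\ge\;0 .
\]

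For the converse, suppose $[a_{i+j+1}]_{i,j\ge0}\ge 0$. Putting $\rho_k=a_{k+1}$, this is precisely condition (3) in Nevanlinna's solution to the Hamburger moment problem, so there is a finite positive Borel measure $\mu$ on $\R$ with $\int t^{k}\,d\mu(t)=a_{k+1}$ for every $k$. Because the series has radius of convergence $R>1$ we have $|a_k|\le C\rho^{-k}$ for each $\rho<R$; combining this with $\mu(\{|t|>r\})\,r^{2k}\le \int t^{2k}\,d\mu=a_{2k+1}$ and letting $k\to\infty$ shows $\mu(\{|t|>r\})=0$ whenever $r>1/\rho$, hence $\operatorname{supp}\mu\subseteq[-1/R,1/R]\subset(-1,1)$. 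Set $g(x)=a_0+\int\frac{x}{1-tx}\,d\mu(t)$; its Taylor expansion at $0$ is $\sum_{k\ge0}a_k x^k$, so $g=f$ on $(-R,R)$ by the identity theorem. For each $t$ with $|t|<1$ the map $x\mapsto\frac{x}{1-tx}=-\tfrac1t+\tfrac1{t^{2}}\bigl(\tfrac1t-x\bigr)^{-1}$ (and $x\mapsto x$ when $t=0$) is matrix monotone on $(-1,1)$, since $|1/t|\ge 1$ and $x\mapsto(c-x)^{-1}$ is matrix monotone on any interval avoiding $c$ (the elementary order-reversal of matrix inversion); integrating against $\mu\ge0$ gives $f(A)=a_0 I+\int A(I-tA)^{-1}\,d\mu(t)$, which is matrix monotone on $(-1,1)$. (Alternatively, $g$ is visibly a Pick function, so one may instead invoke the converse half of L\"owner's theorem.)

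The genuinely delicate points are the bookkeeping in the first reduction --- extracting the precise form $a_0+\int\frac{x}{1-tx}\,d\mu$, with the moment identities $a_{k+1}=\int t^k\,d\mu$, from the general Herglotz--Nevanlinna representation of $h$, in particular correctly absorbing the linear term $\beta z$ and carrying out the change of variables $t\mapsto 1/t$ --- and the support-localisation estimate in the converse. The remaining matters (convergence and analyticity of the defining integral for $g$, and commuting the integral with the functional calculus by diagonalising the matrix argument) are routine. Uniqueness of the representing measure $\mu$ follows from determinacy of the compactly supported Hamburger moment problem, equivalently from the uniqueness clauses in Theorem \ref{NevanlinnaRep} and Nevanlinna's moment theorem.
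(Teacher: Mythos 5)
Your proof is correct, but you take the classical route (L\"owner's theorem plus the Nevanlinna integral representation), which is what the paper alludes to in the one sentence preceding the corollary, whereas the sketch the paper actually gives in Section 1.3.1 --- the one that matters for the rest of the paper --- runs along quite different lines. You pass through the one-variable Pick continuation (Theorem \ref{LownerOne}), localize the representing measure $\sigma$ of the Nevanlinna representation off $(-R,R)$ using analyticity across the real axis, change variables $t\mapsto 1/t$ and absorb the linear term into an atom at $0$ to exhibit $f(x)=a_0+\int x(1-tx)^{-1}\,d\mu(t)$, and read off $a_{k+1}=\int t^k\,d\mu$; positivity of the Hankel matrix is then the positivity of a moment matrix, and conversely the Hamburger solvability plus a support-localization estimate reconstructs $f$ as an integral of the elementary monotone kernels $x\mapsto -1/t+t^{-2}(1/t-x)^{-1}$. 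The paper instead works with the derivative: it writes $Df(X)[H]=m_X^*\bigl([a_{i+j+1}]\otimes H\bigr)m_X$ with $m_X=(1,X,X^2,\dots)^{T}$, deduces $[a_{i+j+1}]\geq 0$ from local monotonicity via the Choi--Kraus theorem and a uniqueness argument on the Szeg\"o-kernel span (the Hamburger model, Lemmas \ref{finitehamburger} and \ref{unique}), and then uses model theory to analytically continue $f$ to a Pick function --- it never touches the Nevanlinna integral representation. Your argument is more elementary and self-contained in one variable, but it relies on machinery (the measure-theoretic Nevanlinna representation, the change of variables $t\mapsto 1/t$, and localization of $\operatorname{supp}\sigma$) that has no free analogue; the paper's Hamburger-model proof is the one that lifts to several noncommuting variables, which is why the paper sketches it rather than yours.
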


Via the connection to moment problems and matrix monotonicity, the theory of Pick functions has deep and well-studied consequences for science and engineering. John von Neumann and Eugene Wigner applied L\"owner's theorem to the theory of quantum collisions \cite{wigner}.
Other applications include quantum data processing \cite{ahls}, wireless communications \cite{Jorswieck2007f, Boche2004e} and engineering \cite{alcober,osaka}.

We execute the program above in \emph{several noncommuting variables.} We now briefly describe the free functional calculus, which gives a meaningful notion of a noncommutative function.

\subsection{The free functional calculus}
	
	The functional calculus in several variables is less well understood than that given in \eqref{BFC} because it is
	noncommutative. The \emph{free functional calculus} is modeled on the theory of free polynomials evaluated
	on tuples of matrices. For this purpose, free polynomials have three important properties which we will
	now illustrate with an example.
	
	Consider the free polynomial in two variables,
		$$p(X,Y) = XY + 7XYX.$$
	
	First, given two $n$ by $n$ matrices with entries in $\mathbb{C},$
	$X, Y \in \M_n(\mathbb{C}),$ the value of $p$ at the point $(X, Y),$ $p(X,Y)$ is again
	a matrix in $\M_n(\mathbb{C}).$ This says that $p$ is a \emph{graded function.}
	
	Second, for two $n$ by $n$ matrices $X_1, Y_1 \in \M_n(\mathbb{C}),$ and two $m$ by $m$ matrices $X_2, Y_2 \in \M_m(\mathbb{C}),$
	consider the calculation
	\begin{align*}
		p\left(\bpm X_1 & \\ & X_2\epm, \bpm Y_1 & \\ & Y_2\epm\right) & =  \bpm X_1 & \\ & X_2\epm \bpm Y_1 & \\ & Y_2\epm 
		+ 7\bpm X_1 & \\ & X_2\epm \bpm Y_1 & \\ & Y_2\epm \bpm X_1 & \\ & X_2\epm , \\
				     & =  \bpm X_1Y_1 + 7X_1Y_1X_1 & \\ & X_2Y_2 + 7X_2Y_2X_2\epm,\\
				     & =  \bpm p(X_1,Y_1) & \\ & p(X_2,Y_2)\epm .
	\end{align*}
	The identity $$p\left(\bpm X_1 & \\ & X_2\epm, \bpm Y_1 & \\ & Y_2\epm\right)=
	\bpm p(X_1,Y_1) & \\ & p(X_2,Y_2)\epm $$ says that $p$ \emph{respects direct sums.}
	
	Third, given two matrices $X, Y \in \M_n(\mathbb{C}),$ and an invertible matrix consider the following calculation of the value
	of $p(S^{-1}XS,S^{-1}YS):$
		\begin{align*}
		p(S^{-1}XS,S^{-1}YS) & = S^{-1}XSS^{-1}YS + 7S^{-1}XSS^{-1}YSS^{-1}XS , \\
				     & =  S^{-1}XYS + 7S^{-1}XYXS,\\
				     & =  S^{-1}(XY + 7XYX )S,\\
				     & =  S^{-1}p(X,Y)S.
		\end{align*}
	The identity $$p(S^{-1}XS,S^{-1}YS) = S^{-1}p(X,Y)S$$ says that $p$ \emph{respects similarity.}

	These three properties, to be graded, respect direct sums and similarity, constitute the definition of a \emph{free function},
	which we now describe precisely.
	
	Let $\M^d$ denote the \emph{$d$-dimensional matrix universe}, which is defined by the equation
	\[
	\M^d = \bigcup_{n=1}^{\infty} \M_n^d(\C).
	\]
	\begin{definition}
	A set $D \subset \M^d$ is called a \emph{free set} if satisfies the following conditions.
	\begin{description}
	\item[$D$ is closed with respect to direct sums] 
	\hfill
	 
	$X = (X_1, \ldots, X_d) \in D$ and $Y = (Y_1, \ldots, Y_d) \in D$ if and only if
	$
	\bpm X & \phantom 0 \\ \phantom 0 & Y \epm = \left(\bpm X_1 & \phantom 0 \\ \phantom 0 & Y_1 \epm, \ldots, \bpm X_d & \phantom 0 \\ \phantom 0 & Y_d \epm\right) \in D.
	$
	\item[$D$ is closed with respect to unitary similarity]
	\hfill

	If $X \in D\bigcap \M_n^d, U \in U_n$, then
	\[
	U\ad X U = (U\ad X_1 U, \ldots, U\ad X_d U) \in D.
	\]
	\end{description}
	Here, $U_n$ denotes the unitary matrices of size $n$.
	\end{definition}
	\begin{definition}
	Let $D$ be a free set.
	Let $f: D \to \M^1$ be a function. We say that $f$ is a \emph{free function} if it satisfies the following conditions.
	\begin{description}
	\item[$f$ is graded] If $X \in D \bigcap \M_n^d$, then $f(X) \in \M_n^1$.
	\item[$f$ respects direct sums] If $X, Y \in D$ then
	\[ 
	f \bpm X & \phantom{0} \\ \phantom 0 & Y \epm = \bpm f(X) & \phantom 0 \\ \phantom 0 & f(Y) \epm.
	\]
	\item[$f$ respects similarity] 
	\hfill

	If $S \in GL_n,$ and $X \in D$ such that $S^{-1}XS \in D$, 
	\[
	f(S\inv X S) = S\inv f(X) S.
	\]
	\end{description}
	Here, $GL_n$ denotes the invertible matricies of size $n$.
	\end{definition}
	
	It is known that for a free set $U$, if $U\cap \M_n$ is open at each level and $f$ is a locally bounded free function, then for 
		$$\hat{U} = \{z |z\in \sigma(A), A \in U\}$$
	there is a unique holomorphic $\hat{f}:\hat{U} \rightarrow \mathbb{C}$ such that $f(A) = \hat{f}(A)$
	for all $A\in U.$
	As in the one variable case, if $U\cap \mathcal{M}^d_n$ is open, then $f$ is
	holomorphic on $U\cap \mathcal{M}^d_n$ as a function of $dn^2$ variables \cite{vvw12}.
	
	We note that free polynomials are free functions. Free rational functions also give free functions off of their singular sets.
	For example,
		$$f(X,Y) = (Y^2 + 5)(1-X)^{-1}Y$$
	is a free function defined on the free set
			$$\{X, Y \in \M^2| 1 \notin \sigma(X)\}.$$
	
	The study of functions in this context is known as \emph{free analysis.}

\subsection{Free Pick functions}

We specify an ordering on tuples of matrices where for $A = (A_1,\ldots, A_d)$ and $B = (B_1,\ldots, B_d)$, the statement
$A> B$ means $A_i-B_i$ is strictly positive definite for each $1 \leq i\leq d$, and $A \geq B$ means $A_i-B_i$ is positive semidefinite  for each $1 \leq i\leq d$.

We consider the \emph{free Pick class} $\mathcal{P}_d,$ the set of free functions defined on the domain
\[
\Pi^d := \{X \in \M^d | \IM X_i = \frac{1}{2}(X_i - X_i\ad) > 0, i = 1\ldots, d\}
\]
with range in $\cc{\Pi^1}$.
These functions directly generalize multivariable Pick functions, maps from the polyupperhalfplane $\Pi_1^d$ to the upper halfplane $\cc{\Pi_1^1}$, as
$$\Pi_1^d = \mathbb{H}^d.$$

Agler, Tully-Doyle, and Young showed the following generalization of Theorem \ref{NevanlinnaRep} \cite{aty13}, presented here in two variables.
\begin{theorem}[Type I representation theorem]\label{oldnevrepstype1}
Let $h$ be a function defined on $\Pi^2_1$. Then there exist a Hilbert space $\hilbert$, a self-adjoint operator $A$ on $\hilbert$, a vector $v \in \hilbert$, and a positive semidefinite contraction $Y$ on $\hilbert$ so that 
\[
h(z) = \ip{(A - z_1 Y - z_2 (1-Y) )\inv v}{v}
\]
if and only if $h$ is a Pick function and
\[
\liminf_{s \to \infty} s\abs{h(is, is)} < \infty.
\]
\end{theorem}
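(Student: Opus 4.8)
The strategy is to treat the two implications separately: the forward (``only if'') direction is a direct resolvent computation, and the converse is obtained by transporting a transfer-function realization from the bidisk, where And\^o's inequality guarantees one, and then using the growth hypothesis to discard the remainder.

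For the forward direction, suppose $h$ is given by the stated formula and write $T(z) = A - z_1 Y - z_2(1-Y)$ for $z \in \Pi_1^2 = \Ha^2$. Since $A$ is self-adjoint and $0 \le Y \le 1$, one has
\[
\IM T(z) = -(\IM z_1)\,Y - (\IM z_2)(1-Y) \le -\min(\IM z_1, \IM z_2)\, I < 0,
\]
where the inequality uses $Y + (1-Y) = 1$. Thus $T(z)$ is a bounded dissipative perturbation of a self-adjoint operator, so $T(z)$ and $T(z)\ad$ are bounded below and $T(z)\inv$ exists. The resolvent identity $\IM(T(z)\inv) = -T(z)\inv\,(\IM T(z))\,(T(z)\inv)\ad \ge 0$ gives $\IM h(z) = \ip{\IM(T(z)\inv)\, v}{v} \ge 0$, so $h$ is a Pick function. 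Restricting to the diagonal we get $T(is,is) = A - isI$, so by the spectral theorem $\abs{h(is,is)} \le \norm{(A - isI)\inv}\,\norm{v}^2 \le \norm{v}^2/s$, and hence $\liminf_{s\to\infty} s\abs{h(is,is)} \le \norm{v}^2 < \infty$.

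For the converse, suppose $h$ is a Pick function on $\Ha^2$ with $\liminf_s s\abs{h(is,is)} < \infty$; after replacing $h$ by $h + i\eps$ and removing $\eps$ at the end by a normal-families argument, we may assume $h$ maps into the open upper half-plane. Composing with the Cayley transform $\lambda_j = (z_j - i)(z_j + i)\inv$ in the arguments and with a M\"obius map carrying $\Ha$ onto $\D$ in the value produces an analytic $\ph \colon \D^2 \to \D$. By And\^o's inequality (equivalently, Agler's realization theorem on the bidisk, which is where two variables is used), $\ph$ has a transfer-function realization
\[
\ph(\lambda) = D + C\,(I - \Lambda_\lambda a)\inv \Lambda_\lambda\, B, \qquad \Lambda_\lambda = \lambda_1 P_1 + \lambda_2 P_2,
\]
for a unitary colligation $\bbm a & B \\ C & D \ebm$ on $\C \oplus \M$, where $\M = \M_1 \oplus \M_2$ and $P_j$ is the orthogonal projection of $\M$ onto $\M_j$. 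Undoing the two Cayley transforms and collapsing the composite expression by a Schur-complement (Cramer's rule) manipulation rewrites $h$ as a half-plane resolvent,
\[
h(z) = \ip{\big(A - z_1 Y - z_2(1-Y)\big)\inv v}{v} + R(z),
\]
with $A$ self-adjoint (self-adjointness inherited from unitarity of the colligation through the Cayley transform), $Y$ a positive contraction built from $P_1$ so that $1-Y$ is also positive, $v \in \hilbert$, and $R$ a remainder that is affine in $z$ with positive-semidefinite coefficients and real constant term. On the diagonal the resolvent term is $O(1/s)$ by the bound above, so $\liminf_s s\abs{h(is,is)} < \infty$ forces $\liminf_s s\abs{R(is,is)} < \infty$; writing $R(is,is) = \alpha_0 + i(\alpha_1 + \alpha_2)s$ with $\alpha_0$ real and $\alpha_1,\alpha_2 \ge 0$, this can only happen if $\alpha_0 = 0$ and $\alpha_1 + \alpha_2 = 0$, hence $\alpha_1 = \alpha_2 = 0$ and $R \equiv 0$, leaving exactly the Type I form.

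The delicate step is the transfer in the converse: one must check that unitarity of the colligation translates precisely into self-adjointness of $A$; that the grading $\M = \M_1 \oplus \M_2$ yields a genuine positive contraction $Y$ with $1-Y$ positive, not merely two positive operators; and that the remainder $R$ carries exactly the sign constraints that allow the growth hypothesis to annihilate it. A secondary nuisance is that only a $\liminf$ is assumed, so all asymptotic arguments should be run along a fixed sequence $s_n \to \infty$ realizing the $\liminf$, and the normal-families reduction to the open upper half-plane should be made at the outset.
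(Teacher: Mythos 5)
The paper does not prove this theorem itself — it cites it from Agler–Tully-Doyle–Young \cite{aty13} — but its own free-function generalization (Theorem \ref{newnevrep}) follows the same pattern as \cite{aty13}: Type 4 (general Herglotz/Cayley realization) $\to$ Type 3 $\to$ Type 2 $\to$ Type 1, using a successively stronger growth condition at each stage. Your forward direction is correct and matches the standard resolvent computation. The converse, however, has a genuine gap.

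The problem is the claim that undoing the Cayley transforms and doing a Schur complement ``rewrites $h$ as a half-plane resolvent $h(z) = \ip{(A - z_1Y - z_2(1-Y))^{-1}v}{v} + R(z)$ with $R$ affine.'' The Cayley transform of the Agler realization does not produce that form; it produces the Type 4 structure, in which $\hilbert$ splits as $\N \oplus \K$ with $A$ densely defined (and generally unbounded) on $\K$, and the $\K$-block is a \emph{Type 3} resolvent $(1-iA)(A - \delta_Y(Z))^{-1}(1 + \delta_Y(Z)A)(1-iA)^{-1}$, not a bare resolvent plus an affine term. In one variable, restricting to the diagonal, this block is $\int \frac{1+tz}{t-z}\,\dd\nu(t)$; the algebraic identity $\frac{1+tz}{t-z} = \frac{1+t^2}{t-z} - t$ lets you split it into ``Type 2 resolvent plus constant'' \emph{only if} $\int(1+t^2)\,\dd\nu < \infty$, i.e.\ only if $v \in \mathcal D(A)$. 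That membership must itself be deduced from the growth condition (as the paper does via the monotone convergence of $\int \frac{s^2(1+t^2)}{t^2+s^2}\,\dd\nu$), so your decomposition into ``resolvent + affine $R$'' presupposes the key estimate it is supposed to deliver. Without that step, the expression $R(is,is) = \alpha_0 + i(\alpha_1+\alpha_2)s$ you analyze does not exist.

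A secondary issue: the $i\eps$ normal-families reduction is both unnecessary and harmful. Replacing $h$ by $h + i\eps$ destroys the hypothesis, since $s\abs{h(is,is)+i\eps} \to \infty$. In fact the reduction is not needed: if a Pick function on $\Ha^2$ touches the real axis it is a real constant $c$ by Harnack, and the growth condition then forces $c = 0$, which trivially has a Type 1 representation with $v = 0$. So the degenerate case can be dispatched directly. To repair the main argument you should follow the staged approach: derive the Type 4 realization, use $\liminf \frac{1}{s}\IM h(is,is) = 0$ to annihilate the $\N$-component of $v$ (Type 3), use $\liminf s\,\IM h(is,is) < \infty$ to show $v \in \mathcal D(A)$ via the scalar spectral measure (Type 2), and finally use $\liminf s\abs{h(is,is)} < \infty$ to extract a real sequence $s_n$ on which $\RE h(is_n,is_n) \to a$ and conclude $a = 0$.
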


We show that the above theorem also holds for free Pick functions. Our Theorem \ref{newnevrep} contains a precise analogue of Theorem
\ref{oldnevrepstype1} among more general representations. That Theorem \ref{oldnevrepstype1} can be extended in this way is an example of \emph{noncommutative lifting principle}, a guiding principle that states that if a theorem holds by virtue of operator theoretic methods following  Agler's seminal paper of 1990, \emph{On the representation of certain holomorphic functions defined on a polydisc}, then it will hold for free functions. Our proofs of these theorems illustrate the theme of lifting functions from varieties to whole domains which began with Cartan's theorems A and B \cite{car51} and continued in \cite{agmc_vn} with bounds and later more precisely by \cite{agmc_dv, jkmc}, etc.

To give a concrete example of the noncommutative lifting principle, consider a Type I function given by the formula 
	$$h(z) = \ip{(A- z_1 Y - z_2 (1-Y) )\inv\alpha}{\alpha}$$
as in Theorem \ref{oldnevrepstype1}. We can extend $h$ to a function $H$ defined on $\Pi^2.$ Given a $Z \in \Pi^2_n$ we obtain
$H(Z)$ via the formula
	\beq \label{introtype1} H(Z) = (\alpha \otimes I_n)\ad (A \otimes I_n - Y \otimes Z_1 - (1-Y) \otimes Z_2)\inv (\alpha \otimes I_n). \eeq 
This function is well-defined on $\Pi^2$ and 
	$$H|_{\Pi_1^2} = h.$$
We show that free Pick functions $H: \Pi^d \to \cc\Pi^1$ that restrict to Type I Pick functions on $\Pi_1^d$ have representations analogous to that in \eqref{introtype1}.

\subsection{Free matrix monotonicity}

L\"owner's theorem on matrix monotone functions has been generalized to several commuting functional calculi.
Agler, {\mc}Carthy and Young proved that matrix monotone functions defined on commuting tuples of matrices extend
to functions in the L\"owner class \cite{amyloew}. Others have studied matrix monotonicity on an alternate commutative functional calculus involving tensor products \cite{han03,kor61,siva02}.

We prove a free analogue of L\"owner's theorem in several noncommuting variables.
We consider functions on domains contained in the \emph{real matrix universe,}
	$$\RR^d := \{X \in \M^d |  X_i = X_i^*, i =1 \ldots d\}.$$
A free set $D\subset \RR^d$ is a \emph{real free domain} if $D \cap \RR_n^d$ is open in the space $\RR^d_n.$ 
We prove L\"owner's theorem for real analytic free functions on convex free sets $D$, free sets such that $D_n = D\cap \M_n^d$ is convex for all $n$.  Since a free function 
is analytic if it is locally bounded \cite{vvw12}, a real free function $f$ is real analytic on $D$ if for all $X_0 \in D$ there is a bounded free function defined on a domain containing the set $\{X \in \M_{nk}^{d} | \norm{X - X_0 \otimes I_k} < \varepsilon\}$ which agrees with $f$ on $D$. We give a formal, intrinsically real definition of real analytic free functions in terms of powers series in Section \ref{realfreemaps}.

A real analytic free function is \emph{matrix monotone} if
	$$X \leq Y \Rightarrow f(X) \leq f(Y).$$
The following is our generalization of L\"owner's Theorem. 	
\begin{theorem}\label{freelow}
Let $D$ be a convex free set of $d$-tuples of self-adjoint matrices.
A real analytic free function $f: D \rightarrow \RR$ is matrix monotone if and only if
$f$ analytically continues to $\Pi^d$ as a function in the free Pick class.
\end{theorem}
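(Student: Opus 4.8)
The plan is to treat the two implications separately: the forward direction (free Pick $\Rightarrow$ matrix monotone) is an infinitesimal computation, while the converse is a lifting argument that runs the machinery behind the representation theorem, Theorem~\ref{newnevrep}.

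\emph{Pick implies monotone.} Suppose $f$ continues analytically to a function $F\in\mathcal P_d$ on $\Pi^d$. Fix $X\in D\cap\RR^d_n$ and a $d$-tuple $H=(H_1,\dots,H_d)$ of positive definite $n\times n$ matrices. For small real $\eps>0$ the point $X+i\eps H$ lies both in $\Pi^d$ (its imaginary part is $\eps H>0$) and near $X$, where the continuation $F$ is holomorphic and equals $f$; the free Taylor expansion gives $F(X+i\eps H)=f(X)+i\eps\,Df_X[H]+O(\eps^2)$. Since $f$ takes self-adjoint values, $f(X)$ and $Df_X[H]$ are self-adjoint, so $\IM F(X+i\eps H)=\eps\,Df_X[H]+O(\eps^2)$; as $F$ maps into $\cc{\Pi^1}$ this is positive semidefinite, and dividing by $\eps$ and letting $\eps\to0^+$ yields $Df_X[H]\ge0$. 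Approximating an arbitrary positive semidefinite $H$ by $H+\delta(I,\dots,I)$ gives $Df_X[H]\ge0$ for all $H\ge0$. Finally, if $X\le Y$ in $D\cap\RR^d_n$, convexity of $D$ puts the segment $X+t(Y-X)$, $t\in[0,1]$, inside $D$, and $f(Y)-f(X)=\int_0^1 Df_{X+t(Y-X)}[Y-X]\,dt\ge0$. Thus $f$ is matrix monotone.

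\emph{Monotone implies Pick.} Now assume $f$ is matrix monotone. Because $D$ is a free set it contains every ampliation $X\otimes I_k$, and for a free function the directional derivative at $X\otimes I_k$ is the $k$-th ampliation of the linear map $Df_X\colon M_n\to M_n$; applying matrix monotonicity at all the points $X\otimes I_k$ (and dividing difference quotients as above) therefore shows that \emph{$Df_X$ is completely positive for every $X\in D$}. The next step is to convert this into positivity of the hereditary kernel that underlies Theorem~\ref{newnevrep}: using the free difference quotient $\Delta f$ and its resolvent-type algebraic identities, complete positivity of the derivatives, transported between basepoints of different sizes, assembles into a single positive kernel on $D$. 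A Kolmogorov decomposition produces a Hilbert space $\hilbert$, and a lurking-isometry argument then organizes the decomposition into a self-adjoint operator $A$ on $\hilbert$, positive semidefinite operators $Y_1,\dots,Y_d$ with the appropriate normalization, an affine free function $L$, and a vector $v$, so that
\[
f(X)=L(X)+(v\otimes I)\ad\Bigl(A\otimes I-\textstyle\sum_{i=1}^d Y_i\otimes X_i\Bigr)^{-1}(v\otimes I)
\qquad\text{for all }X\in D;
\]
that is, $f$ satisfies the free Type~I representation contained in Theorem~\ref{newnevrep} (the analogue of Theorem~\ref{oldnevrepstype1}). The right-hand side is a free function, and the normalization of the $Y_i$ forces the pencil $A\otimes I-\sum Y_i\otimes X_i$ to be invertible — with positive semidefinite imaginary part — at every $X\in\Pi^d$, so the formula defines an element of $\mathcal P_d$ on all of $\Pi^d$. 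Since both sides are holomorphic on a complex free neighborhood of $D$ and agree on the totally real set $D$, they agree with the local continuation of $f$ there, and hence the formula is the sought analytic continuation, landing in $\cc{\Pi^1}$.

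\emph{The main obstacle.} The crux is the middle of the converse: passing from the purely local data ``$Df_X$ completely positive at each $X$'' to a single global positive kernel, and then to a realization whose pencil is invertible on the whole of $\Pi^d$. Convexity of $D$ is what makes this possible — it is the free analogue of using an entire interval, rather than a germ, in L\"owner's classical argument — and, unlike in one variable, there is no Pick-interpolation shortcut, since even commutative multivariable Nevanlinna--Pick interpolation is genuinely subtler; one is forced through the operator-model route, which is precisely the noncommutative lifting principle in action. The regularity difficulties of the classical theorem do not arise here because real analyticity of $f$ is assumed, but care is needed with the choice of basepoint and with the intrinsic real structure, for which we refer to Section~\ref{realfreemaps}.
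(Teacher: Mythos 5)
Your forward direction is correct and is essentially the paper's argument: a first-order expansion of the continuation $F$ at a self-adjoint point in a positive direction forces $Df_X[H]\ge 0$, and convexity of $D$ then integrates this up to monotonicity via the fundamental theorem of calculus (this is the paper's Lemma proving $(4\Rightarrow 1)$ in Theorem~\ref{lownerfull} combined with the integral identity preceding the definition of local monotonicity).

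The converse has a genuine gap, however, and it is not a matter of missing detail. You claim that matrix monotonicity produces a realization of the form
\[
f(X)=L(X)+\tensor{v^*}{I}\left(\tensor{A}{I}-\sum_i\tensor{Y_i}{X_i}\right)^{-1}\tensor{v}{I}
\]
with $L$ affine, and you identify this with the free Type~1 representation of Theorem~\ref{newnevrep}. But Type~1 (and Type~2, and Type~3) representations require a growth condition at infinity: Type~1 requires $\liminf_{s\to\infty} s\abs{h(is\chi)}<\infty$, and even with the extra affine term the pencil form $(\tensor{A}{I}-\delta_Y(Z))^{-1}$ subtracts off only linear growth. A general matrix monotone function need not satisfy anything of this kind. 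Already $f(X)=X_1$ is matrix monotone, continues to the Pick function $h(z)=z_1$, and has no Type~1, 2, or 3 representation — its only representation is Type~4, via the matricial resolvent $M_4$, which is a structurally more complicated object than a single resolvent plus affine term. So the target you set for the lurking-isometry step is not in general achievable, and the step ``the normalization of the $Y_i$ forces the pencil to be invertible on $\Pi^d$'' is vacuous because there is no such pencil to normalize.

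The paper's route is genuinely different and avoids this entirely. It never passes through a Nevanlinna-type resolvent realization. Instead it proves $(1\Rightarrow 2\Leftrightarrow 3\Rightarrow 4)$ of Theorem~\ref{lownerfull}: local monotonicity yields a \emph{Hamburger model} $Df(X)[H]=\sum_i u_i(X)^*\tensor{I}{H_i}u_i(X)$, which by Proposition~\ref{derUnit} (the direct-sum derivative identity) is equivalent to a \emph{boundary Nevanlinna model} $f(X)-f(Y)^*=\sum_i u_i(Y)^*\tensor{I}{X_i-Y_i^*}u_i(X)$, and then the free Schur--Agler model theorem for the half-plane (Theorem~\ref{ncschur2}) gives the extension to $\Pi^d$ — no structured resolvent, no growth condition. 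The hard content is precisely where your sketch says ``assembles into a single positive kernel on $D$'': the paper has to show (Lemma~\ref{finitehamburger}) that the Choi--Kraus factors for $Df(X)$ are values of free polynomials (this uses domination and the reduction to a direct sum of full matrix algebras, Lemmas~\ref{domReduction} and~\ref{dominationDer}), and then (Lemma~\ref{unique}) that the resulting localizing matrix is unique on the Szeg\H{o}-kernel spaces $\vecspace_X^m$ in the free coefficient Hardy space, which is what allows the local data to be patched into a global model by a direct limit. These are the steps that must be supplied; ``Kolmogorov decomposition plus lurking isometry'' is not a substitute, because the lurking isometry argument (Theorem~\ref{lurk}) only applies once one already has two linear forms agreeing on all of $D$ — which is exactly the globalization you have not established.
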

This follows from Theorem \ref{lowTheorem} applied to functions on convex domains.

Convex free sets have a rigid structure because they are simultaneously free sets and convex sets at each level.  Convex free sets have been an object of recent work following the trend in semidefinite programming. For example, Helton and McCullough \cite{heltmc12} showed that semialgebraic convex sets are LMI domains, a generalization of Lasserre's result in the commutative case \cite{las01}. An LMI domain is the set of tuples of self-adjoint  $X_i$ such that
$
\sum A_i \otimes X_i \leq A_0
$
where each $A_i$ is symmetric.

We remark that the representations which correspond to our free analogue of Theorem \ref{oldnevrepstype1} can be used to manufacture  concrete formulas for monotone functions similar
to the representations of rational matrix convex functions in work of Helton, McCullough and Vinnikov
\cite{helmcvin06}, which show that rational matrix convex functions are all obtained
by taking the Schur complement of a monic LMI.
In the classical theory, there is a strong connection 
between convex and monotone functions \cite{bha97,kraus36}, which seems to be suggested again here.

We now introduce the machinery for understanding \emph{free power series.}
Let $\mathcal I$ denote the set of words in the letters $x_1, \ldots, x_d.$ The set $\mathcal I$ is
equipped with an involution $*$ which reverses the letters in a word. For example,
	$$(x_1x_2)^* = x_2x_1.$$
For a word $w \in \mathcal I,$ we define $X^w$ recursively. For the empty word $e$, $X^e = I$
and $X^{x_k w} = X_kX^w.$ For example,
	$(X_1,X_2)^{x_1x_2x_1} = X_1X_2X_1.$
A \emph{free power series} is an expression of the form
	$$f(X) = \sum_{I\in \mathcal I} c_IX^I.$$
To prove L\"owner's theorem, we prove an analogue of Corollary  \ref{seriesCharacter} for free power series. 
\begin{theorem}
Let  $f(X) = \sum_{I\in \mathcal I} c_IX^I$ be a free power series in $d$ noncommuting variables which converges for all $\|X\|\leq d + \epsilon.$
The function $f$ is matrix monotone on the set of $X$ such that $\|X\| \leq \frac{1}{d}$ if and only if for $1\leq k\leq d$, the infinite matrices
			$[c_{I^*x_kJ}]_{I,J \in \mathcal I}$
are positive semidefinite.
\end{theorem}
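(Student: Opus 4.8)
The plan is to convert matrix monotonicity into positivity of the partial derivatives of $f$ and then to recognize the Hankel matrices $[c_{I^\ast x_k J}]$ as the Gram data governing that positivity. Since $f$ is self-adjoint valued on self-adjoint tuples, $\overline{c_W}=c_{W^\ast}$ and the stated matrices are Hermitian, so ``positive semidefinite'' (meaning: at every finite truncation) makes sense. Differentiating $t\mapsto f(X+tH)$ shows necessity, and integrating $Df$ along segments in the ball of radius $1/d$ shows sufficiency (legitimate since that ball is a convex free set and $A\le C\le D$ with $A,D$ self-adjoint forces $\norm C\le\max(\norm A,\norm D)$): thus $f$ is matrix monotone on that ball iff for every $k$, every self-adjoint $X$ with $\norm X<1/d$, and every $H\ge 0$ one has $D_k f(X)[H]\ge 0$, where $D_k f(X)[H]:=\sum_{A,B}c_{A x_k B}\,X^A H X^B$ is the derivative of $f$ at $X$ in the direction of perturbing the $k$-th variable by $H$. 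Using $X=X^\ast$ one has $h^\ast X^B=(X^{B^\ast}h)^\ast$, and substituting $B=J^\ast$ gives, in the rank-one case,
\[
\ip{D_k f(X)[hh^\ast]\,w}{w}\;=\;\sum_{A,J}c_{A x_k J^\ast}\,\ip{X^A h}{w}\,\overline{\ip{X^J h}{w}}.
\]
Finally, the word involution is a length-preserving bijection of $\mathcal I$, so relabelling rows by $A\mapsto I^\ast$ and columns by $J\mapsto J^\ast$ conjugates $[c_{A x_k J^\ast}]_{A,J}$ to $[c_{I^\ast x_k J}]_{I,J}$ by a permutation; the two families of matrices are thus simultaneously positive semidefinite at every truncation, so it suffices to prove the equivalence with $[c_{A x_k J^\ast}]$. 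Convergence of $f$ on $\norm X\le d+\epsilon$ also yields a geometric bound $|c_W|\le C\beta^{-|W|}$ with $\beta>1$, which is what the series below need.

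\emph{Hankel positivity $\Rightarrow$ matrix monotone.} Fix $k$ and take a Kolmogorov decomposition $c_{A x_k J^\ast}=\ip{\psi_J^{(k)}}{\psi_A^{(k)}}$ in a Hilbert space $\K_k$. For self-adjoint $X$ with $\norm X\le 1/d$ and $H=\sum_p h_p h_p^\ast\ge 0$, the displayed identity together with the decomposition gives $D_k f(X)[H]=\sum_p R_p R_p^\ast$, where $R_p^\ast\colon\C^n\to\K_k$ is $\zeta\mapsto\sum_J\ip{\zeta}{X^J h_p}\,\psi_J^{(k)}$; hence $D_k f(X)[H]\ge 0$ and $f$ is matrix monotone. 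The series defining $R_p^\ast$ converges because $\sum_A c_{A x_k A^\ast}\norm{X}^{2|A|}<\infty$ for $\norm X\le 1/d$: this is exactly where the calibration of the monotonicity radius $1/d$ against the convergence radius $d+\epsilon$ is used, the $d^m$ words of length $m$ being absorbed by the geometric decay of the coefficients. (Reorganized as a resolvent expansion, the same data produces a free Cauchy-transform representation of $f$ in the spirit of Theorem \ref{newnevrep}, giving a second, representation-theoretic proof of this implication.)

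\emph{Matrix monotone $\Rightarrow$ Hankel positivity.} Fix $k$ and a truncation level $N$, and enumerate the words of length $\le N$ as $W_1,\dots,W_n$. The crux is a realizability lemma: there exist a self-adjoint tuple $X_0\in\RR_n^d$ and $v_0\in\C^n$ with $\{X_0^{W_i}v_0\}$ a basis of $\C^n$. A nilpotent model is unavailable (a nonzero nilpotent cannot be self-adjoint), so instead observe that $(X,v)\mapsto\det[\,X^{W_1}v\mid\cdots\mid X^{W_n}v\,]$ is a nonzero polynomial on $\M_n^d\times\C^n$ — it does not vanish at the tuple of letter-prepending operators on $\C^n$ together with the empty-word basis vector, which these operators carry onto the standard basis — and the self-adjoint tuples are Zariski dense in $\M_n^d$ (every matrix is $H_1+iH_2$ with $H_j$ self-adjoint, so a complex polynomial vanishing on all self-adjoint tuples vanishes identically); hence the polynomial is nonzero at some self-adjoint pair. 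Scale $X_0\rightsquigarrow\epsilon'X_0$ to land in the $1/d$-ball. Given a finitely supported target $(\xi_{W_i})_i$, solve $\ip{X_0^{W_i}v_0}{w_{\epsilon'}}=(\epsilon')^{-|W_i|}\xi_{W_i}$ for $w_{\epsilon'}$, so that $\phi_A:=\ip{(\epsilon'X_0)^A v_0}{w_{\epsilon'}}$ equals $\xi_A$ for $|A|\le N$ while $|\phi_A|\le C\,(\epsilon'\norm{X_0})^{|A|}(\epsilon')^{-N}$ for $|A|>N$ (using $\norm{w_{\epsilon'}}=O((\epsilon')^{-N})$). Plugging $\phi$ into $\sum_{A,J}c_{A x_k J^\ast}\phi_A\overline{\phi_J}\ge 0$ — the specialization of the identity above to $X=\epsilon'X_0$, $h=v_0$, $w=w_{\epsilon'}$ — and bounding all terms with $|A|>N$ or $|J|>N$ by $O(\epsilon')$ via the geometric decay of the coefficients, then letting $\epsilon'\to 0$, yields $\sum_{|A|,|J|\le N}c_{A x_k J^\ast}\xi_A\overline{\xi_J}\ge 0$. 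As $N$, the target, and $k$ were arbitrary, all the Hankel matrices are positive semidefinite.

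The step I expect to be the main obstacle is in the direction matrix monotone $\Rightarrow$ Hankel positivity: self-adjointness blocks the obvious finite-dimensional model, forcing a genericity (Zariski-density) input, after which one must discard the tail of the Hankel form by a quantitative limit whose convergence hinges on the precise balance between the monotonicity radius $1/d$ and the convergence radius $d+\epsilon$. The other direction is comparatively soft once the Kolmogorov decomposition is at hand — it is essentially complete positivity of the map $H\mapsto\sum_{A,J}c_{A x_k J^\ast}\,X^A H (X^J)^\ast$ — and it is the implication that ties this theorem to the Nevanlinna-type representations of Theorem \ref{newnevrep}.
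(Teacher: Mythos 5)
Your proof is correct, and in the harder direction (monotonicity $\Rightarrow$ Hankel positivity) it takes a genuinely different route than the paper.

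For the easy direction (Hankel positivity $\Rightarrow$ monotonicity), your Kolmogorov-decomposition argument and the paper's are essentially the same: the paper takes the square root $(c_{I^*x_kJ})^{1/2}$ and writes $Df(X)[H]$ as $\sum_k m_X^* \bigl((c_{I^*x_kJ})^{1/2}\otimes I\bigr)(I\otimes H_k)\bigl((c_{I^*x_kJ})^{1/2}\otimes I\bigr)m_X$ (the Corollary following Theorem \ref{heltonHankelX}), while you factor through a Kolmogorov decomposition $c_{Ax_kJ^*}=\langle\psi_J,\psi_A\rangle$ and assemble the corresponding $R_pR_p^*$; both are sum-of-squares readings of the same identity, and the convergence estimates coincide. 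Incidentally, you should say $\beta>d$ (as Lemma \ref{powerSeriesConverge} actually gives $\beta=d+\epsilon/2$) rather than $\beta>1$; with $\|X\|\le 1/d$ the weaker bound would still suffice, but as written it could mislead a reader since $d^m$ words of length $m$ must be summed.

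For the necessity direction your argument is new relative to the paper. The paper establishes the local sum-of-squares representation of $Df(X)[H]$ at each reduced point via the Choi–Kraus theorem together with the Artin–Wedderburn structure of the algebra generated by $X$ (Lemmas \ref{domReduction} and \ref{finitehamburger}), then uses the free coefficient Hardy space and Szeg\H{o} kernels to show the resulting Gram matrix is unique on $\vecspace_X$ (Lemma \ref{unique}), and globalizes by denseness of $\bigcup_X\vecspace_X$ (Proposition \ref{denseSpan}). You instead attack the finite truncations of the Hankel form directly by a realizability argument: exhibit a self-adjoint tuple $X_0$ and vector $v_0$ for which $\{X_0^{W_i}v_0\}$ spans, by observing the relevant determinant is a nonvanishing polynomial on the nilpotent letter-prepending model and invoking Zariski density of the real form $\RR_n^d$ in $\M_n^d$; then prescribe the finite data $(\xi_{W_i})$ by solving for $w_{\epsilon'}$, rescale $X_0\rightsquigarrow\epsilon'X_0$ to enter the $1/d$-ball, and let $\epsilon'\to 0$ so the tail of the Hankel form ($|A|>N$ or $|J|>N$) is absorbed by the coefficient decay. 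This is more elementary and self-contained (no Choi–Kraus, no Hardy space), at the cost of a quantitative limiting argument; the paper's route, by contrast, produces the Hamburger model machinery it reuses throughout Section~\ref{LOWNER}. Both prove the theorem; yours is the shorter path if this theorem is the only goal, the paper's is the better investment for the rest of the development. Your remark that the realizability is the genuine obstruction — because the natural nilpotent model is not self-adjoint — accurately identifies where the free/self-adjoint tension sits in this proof.
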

The preceding is proved as Theorem \ref{heltonHankelX}.

\subsubsection{L\"owner's theorem in one variable}
We now sketch the proof (in one variable) of Corollary \ref{seriesCharacter} to describe ideas important to the proof in several variables.

For real free analytic functions on convex sets, matrix monotonicity is equivalent to local matrix monotonicity.
Let $Df(X)[H]$ be the G\^ateaux derivative at a matrix $X$ in the direction $H.$
A free function is \emph{locally matrix monotone} when 
	\[
		Df(X)[H] \geq 0
	\]
for all $X \in \RR^d$, whenever $H\in \RR^d$ such that $H\geq 0.$
Our proof of Theorem \ref{freelow} constructs a formula for the derivative of a locally monotone real analytic free function that can be used to construct an analytic continuation of the function. We call these formulas \emph{models}.

Given the power series of a monotone function, we construct an explicit model for that function as follows. 
Consider an analytic function defined by a power series in one variable
\[
f(x) = \sum_{i=0}^\infty a_i x^i
\]
defined on a neighborhood of the closed disk.
We seek to study the formula,
\beq
f'(x) =
\bpm 1 \\ x \\ x^2 \\ \vdots \epm^* \bbm a_1 & a_2 & a_3 & \ldots \\ a_2 & a_3 & \phantom\ldots & \ldots \\ a_3 & \phantom a &  & \phantom a  \\ \vdots & \vdots & \phantom a & \ddots \ebm \bpm 1 \\ x \\ x^2 \\ \vdots \epm,
\eeq
in the free case. The derivative of $f$ evaluated via the functional calculus defined by \eqref{BFC} at a matrix $X$ in the direction $H$ is given by the formula,
\beq \label{hampos2}
Df(X)[H] =
\bpm 1 \\ X \\ X^2 \\ \vdots \epm^* \bbm a_1H & a_2H & a_3H & \ldots \\ a_2H & a_3H & \phantom\ldots & \ldots \\ a_3H & \phantom a &  & \phantom a  \\ \vdots & \vdots & \phantom a &  \ddots \ebm \bpm 1 \\ X \\ X^2 \\ \vdots \epm.
\eeq
We establish that
\beq \label{hampos}
\bbm a_1 & a_2 & a_3 & \ldots \\ a_2 & a_3 & \phantom\ldots & \ldots \\ a_3 & \phantom a &  & \phantom a  \\ \vdots & \vdots & \phantom a &  \ddots \ebm \geq 0.
\eeq
When \eqref{hampos} is satisfied, the formula \eqref{hampos2} for $f'$
implies that $f$ can be analytically continued to the upper half plane as a Pick function via the theory of models \cite{ampi}.

In one variable, a \emph{Hamburger model} for a function defined on a neighborhood
of the closed disk is an expression for the derivative of the form
\beq \label{hambonta}
Df(X)[H] =
\bpm 1 \\ X \\ X^2 \\ \vdots \epm^* \bbm a_{11}H & a_{12}H & a_{13}H & \ldots \\ a_{21}H & a_{22}H & \phantom\ldots & \ldots \\ a_{31}H & \phantom a &  & \phantom a  \\ \vdots & \vdots & \phantom a &  \ddots \ebm \bpm 1 \\ X \\ X^2 \\ \vdots \epm.
\eeq
where $(a_{ij})_{ij}$ is positive semidefinite. Note that if \eqref{hampos} is satisfied, then \eqref{hampos2} defines a Hamburger model for the function $f$.
We construct a Hamburger model in Section \ref{construction}, which is unique among expressions of the
form \eqref{hambonta}, irrespective of whether $(a_{ij})_{ij}$ is positive semidefinite or not.
In Section \ref{hankel}, we contruct a candidate for the Hamburger model from the power series for a function, which 
must be equal to the model obtained in Section \ref{construction} by uniqueness.

To construct the Hamburger model, we establish a sum of squares representation
\beq\label{sosDer}
	Df(X)[H] = \sum_{k=0}^{\infty} f_k\ad H f_k,
\eeq
and then reduce it algebraically to the model itself.
This method is modeled on the theory of Positivstellensatz\"e.
In \cite{helmc04}, Helton and McCullough showed the following Positivstellensatz, a generalization of a results of Schm\"udgen \cite{schmu91} and Putinar \cite{put93} to the free case.
\begin{theorem}[Helton, McCullough \cite{helmc04}]
Let $Q$ be a family of free polynomials with the technical assumption that if $q \in Q$ satisfies $q(X)\geq 0$ then $\norm{X} \leq M$ for some uniform constant $M$.
If $f(X)\geq 0$ whenever all $q\in Q$ satisfy $q(X)\geq 0,$ then
$$f = \sum g^*_ig_i + \sum h^*_jq_jh_j$$
for some finite sequences of free polynomials $g_i,$ $h_j,$ and $q_j$ where each $q_j\in Q.$
\end{theorem}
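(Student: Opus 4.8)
The plan is to obtain the representation by running the Gelfand--Naimark--Segal (GNS) construction against a separating linear functional, in the spirit of the classical Positivstellens\"atze of Schm\"udgen and Putinar but taking advantage of the rigidity of matrix positivity. First I would reduce to the case that $Q = \{q_1,\dots,q_m\}$ is finite. The boundedness hypothesis on $Q$ is used to show that the quadratic module
\[
 M_Q = \Bigl\{\, \textstyle\sum_i g_i\ad g_i + \sum_j h_j\ad q_j h_j \;:\; g_i, h_j \text{ free polynomials},\ q_j \in Q \,\Bigr\}
\]
is \emph{archimedean}, i.e.\ $N - \sum_i x_i\ad x_i \in M_Q$ for some $N$; proving this from ``$q(X)\geq 0 \Rightarrow \norm X \leq M$'' is a lemma in its own right, established by a separation argument at the level of truncated cones. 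Archimedean-ness in particular lets finitely many of the $q_j$ do all the work, and once $f$ is shown to lie in $M_Q$, Carath\'eodory's theorem applied in the finite-dimensional space of symmetric polynomials of bounded degree reduces the a priori infinite sums to the finite ones asserted in the statement.

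The heart of the matter is the equality of three sets, proved by a short chain of inclusions: the quadratic module $M_Q$; its closure $\overline{M_Q}$ in the finest locally convex topology on symmetric free polynomials; and the set of symmetric $p$ with $L(p)\geq 0$ for every unital linear functional $L$ that is nonnegative on $M_Q$. The inclusion $M_Q \subseteq \{p : L(p)\geq 0\ \forall L\}$ is trivial, and $\{p : L(p)\geq 0\ \forall L\} \subseteq \overline{M_Q}$ is Hahn--Banach separation applied degree by degree on the finite-dimensional truncated cones $M_Q^{(k)}$ (the elements of $M_Q$ admitting an expression whose summands all have degree $\leq 2k$). The step $\overline{M_Q} = M_Q$ --- i.e.\ each $M_Q^{(k)}$ is \emph{closed} and, because of the archimedean structure, bounded-degree elements have bounded-degree representations so the union is already closed --- is the first genuinely hard point: it is what makes membership in $M_Q$ hold exactly rather than only $f+\eps \in M_Q$ for every $\eps>0$, and it has no equally clean commutative counterpart.

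Given these identifications it suffices to show that $L(f)\geq 0$ for every unital $L\geq 0$ on $M_Q$. Fix such an $L$. The form $(p,r)\mapsto L(r\ad p)$ is positive semidefinite, and the archimedean bound makes the GNS construction go through: it produces a Hilbert space $\h$, a tuple $X=(X_1,\dots,X_d)$ of self-adjoint operators with $\norm{X_i}\leq M$, and a cyclic unit vector $v$ with $L(p)=\ip{p(X)v}{v}$ for all free polynomials $p$. Nonnegativity of $L$ on $M_Q$ gives $\ip{q_j(X)h(X)v}{h(X)v} = L(h\ad q_j h)\geq 0$ for all $h$, hence $q_j(X)\geq 0$ for each $j$ by cyclicity of $v$, so $X$ lies in the operator analogue of the free semialgebraic set cut out by $Q$; meanwhile $\ip{f(X)v}{v}=L(f)$.

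It remains to deduce $\ip{f(X)v}{v}\geq 0$ from the hypothesis that $f$ is nonnegative on tuples of \emph{finite matrices} satisfying the constraints, and this matrix-to-operator transfer is the second hard point and the real crux. I would carry it out by a truncated-moment / flat-extension argument: the functional $p\mapsto\ip{p(X)v}{v}$, restricted to polynomials of degree $\leq 2N$ with $N$ exceeding $\deg f$ and every $\deg q_j$, has a positive semidefinite moment matrix $[L(w\ad w')]$, and one arranges --- passing to the boundary of the cone, where the relevant moment matrix acquires the needed rank stability --- a \emph{finite} matrix tuple $Y$ with $q_j(Y)\geq 0$ and a unit vector $\eta$ such that $\ip{Y^w\eta}{\eta}=\ip{X^wv}{v}$ for all $\abs w\leq 2N$; then $\ip{f(Y)\eta}{\eta}\geq 0$ by hypothesis while $\ip{f(Y)\eta}{\eta}=\ip{f(X)v}{v}$, giving the required nonnegativity. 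Thus the two places one expects to spend real effort are (i) closedness of the (truncated) quadratic module, which is what turns the conclusion into an honest sum-of-squares identity, and (ii) transferring positivity of $f$ from finite matrices to the operator tuple manufactured by GNS, which is exactly where the rigidity of the free functional calculus --- every operator tuple being, in the relevant truncated sense, captured by finite matrix tuples --- does the work.
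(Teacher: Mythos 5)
A preliminary remark: the paper does not prove this statement at all. It is quoted (in paraphrased form) from Helton and McCullough \cite{helmc04} purely as background motivating the sum-of-squares strategy used later for the Hamburger model, so there is no internal proof to compare against; your proposal can only be measured against the published argument. Its general shape you do reproduce: form the quadratic module, separate $f$ from it by Hahn--Banach if it is not a member, and run GNS against the separating functional, with the boundedness/archimedean structure keeping the GNS operators bounded and the constraints $q_j(X)\geq 0$ in force. (The preliminary reduction to finite $Q$ and the appeal to Carath\'eodory are unnecessary: elements of the module are finite sums by definition, and the separation works against the module generated by all of $Q$ at once.)

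The difficulty is that every step carrying actual content is asserted rather than argued, and the one for which you do sketch a mechanism would fail. First, ``$q(X)\geq 0\Rightarrow\norm{X}\leq M$ implies the module is archimedean'' is not a routine separation fact: the analogous implication is \emph{false} commutatively (compactness of the semialgebraic set does not make a quadratic module archimedean), so any proof must exploit freeness in an essential way, and you give no indication of how. Second, exact closedness of the truncated cones, on which your non-strict formulation leans, is likewise only announced. Third, and most seriously, the matrix-to-operator transfer: the GNS tuple in the constrained setting lives on an a priori infinite-dimensional space (one cannot truncate, precisely because compressions destroy $q_j(X)\geq 0$), and your proposed remedy --- a truncated-moment/flat-extension argument in which ``passing to the boundary of the cone'' makes the moment matrix ``acquire the needed rank stability'' --- is not an argument. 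Flatness is a hypothesis in Curto--Fialkow-type results and their free analogues, not something conferred by boundary points of the cone; without it there is no finite matrix tuple $Y$ that exactly satisfies the constraints and matches the moments of the GNS tuple through degree $2N$ (and even then, verifying $q_j(Y)\geq 0$ requires control beyond the matched moments). In \cite{helmc04} this issue is not resolved by any such device: the theorem there assumes strict positivity and is formulated so that the representation produced by GNS itself witnesses the hypothesis, and the non-strict, matrix-tuple version transcribed in this paper is a loose paraphrase --- arguably stronger than what the cited proof delivers. So your outline correctly locates where the work lies, but supplies none of it, and at the crux proposes a mechanism that does not exist.
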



To establish Equation \eqref{sosDer}, we apply the Choi-Kraus representation theorem \cite{bha07}.
A map $L: \M_n \rightarrow \M_n$ is called \emph{positive} if
	$H\geq 0 \Rightarrow L(H) \geq 0.$
A map $L: \M_n \rightarrow \M_n$ is said to be \emph{completely positive} if the extension of
$L$ to $L^m: \M_n \otimes \M_m \rightarrow \M_n \otimes \M_m$ given on simple tensors by the formula
	\beq  \label{composten} L^m(A \otimes B) = L(A) \otimes B \eeq
is positive for every $m.$
\begin{theorem}[Choi\cite{cho72}, Kraus \cite{kraus71}] \label{choi}
A completely positive linear map $L: \M_n \rightarrow \M_n$ can be written in the form
\[
	L(H) = \sum V_i\ad H V_i. 
\]
\end{theorem}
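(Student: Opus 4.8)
The plan is to reduce the statement to a single application of the correspondence between completely positive maps and positive elements of $\M_n \otimes \M_n$, via the Choi matrix. First I would fix an orthonormal basis $e_1,\ldots,e_n$ of $\C^n$ and let $E_{ij}$ denote the matrix units, so that $\{E_{ij}\}$ is a basis for $\M_n$. Given a completely positive $L\colon \M_n\to\M_n$, form the \emph{Choi matrix}
\[
C_L = \sum_{i,j=1}^n E_{ij}\otimes L(E_{ij}) \in \M_n\otimes\M_n.
\]
The first key step is to observe that $C_L = L^n\!\left(\sum_{i,j} E_{ij}\otimes E_{ij}\right)$, and that $\sum_{i,j} E_{ij}\otimes E_{ij}$ is (up to the factor $n$) the orthogonal projection onto the maximally entangled vector $\Omega = \sum_i e_i\otimes e_i$; in particular it is positive semidefinite. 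Since $L$ is completely positive, $L^n$ is positive, and hence $C_L \geq 0$ in $\M_n\otimes\M_n \cong \M_{n^2}$.

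The second key step is to spectrally decompose $C_L$. Because $C_L\geq 0$ on the $n^2$-dimensional space $\C^n\otimes\C^n$, we may write $C_L = \sum_{i=1}^{n^2} w_i w_i\ad$ for vectors $w_i \in \C^n\otimes\C^n$ (eigenvectors scaled by the square roots of the eigenvalues). Each $w_i$ determines a matrix $V_i \in \M_n$ through the canonical vectorization isomorphism $\M_n \to \C^n\otimes\C^n$, $V \mapsto \operatorname{vec}(V) := \sum_{k,l} (V)_{kl}\, e_k\otimes e_l = (V\otimes I)\Omega$; set $w_i = \operatorname{vec}(V_i)$.

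The final step is the bookkeeping identity that recovers $L$. Using that $L$ is linear and determined by its values on the $E_{ij}$, one checks
\[
L(H) = \sum_{i,j} (H)_{ji}\, L(E_{ji})
\]
(or the analogous contraction), and then verifies directly from the definitions that $C_L = \sum_i w_i w_i\ad$ translates, under vectorization, into $L(H) = \sum_i V_i\ad H V_i$ for every $H\in\M_n$; this is the standard identity $L(H) = \sum_i V_i\ad H V_i \iff C_L = \sum_i \operatorname{vec}(V_i)\operatorname{vec}(V_i)\ad$, which is a routine but slightly fiddly computation with matrix units. I expect the main obstacle to be purely notational: getting the transposes/conjugates in the vectorization map aligned so that the summation indices contract correctly and the Choi matrix positivity really does correspond to the claimed sum-of-conjugations form rather than its transpose. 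No deep ingredient beyond finite-dimensional spectral theory and complete positivity (applied at level $m=n$) is needed.
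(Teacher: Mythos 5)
The paper does not prove this theorem: it is quoted as a cited result of Choi \cite{cho72} and Kraus \cite{kraus71}, with \cite{bha07} given as the source applied in Lemma~\ref{finitehamburger}. So there is no internal proof for comparison, and the right question is simply whether your argument is a correct proof of the stated fact.

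It is. What you describe is the standard Choi-matrix proof: form $C_L$, note that $\sum_{i,j}E_{ij}\otimes E_{ij}=\Omega\Omega^*\geq 0$ so that a single application of positivity of the amplified map at level $n$ gives $C_L\geq 0$, take any rank-one decomposition $C_L=\sum_k w_k w_k^*$, and unvectorize each $w_k$ into a Kraus operator $V_k$. The one place that genuinely requires care is exactly the one you flag: the vectorization isomorphism must be chosen consistently (and one typically ends up with $V_k$ replaced by $V_k^{\mathsf T}$ or $V_k^*$ in the final sum, which is harmless since the set of Kraus operators is only determined up to such a relabeling). Two small remarks worth keeping in mind if you write this out in full. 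First, the paper's definition of complete positivity in \eqref{composten} amplifies in the \emph{first} tensor slot, $L^m(A\otimes B)=L(A)\otimes B$, while your Choi matrix $\sum_{i,j}E_{ij}\otimes L(E_{ij})$ lives in the \emph{second} slot convention; the two are intertwined by the unitary tensor swap, so positivity transfers, but the bookkeeping should acknowledge it. Second, the rank-one decomposition $C_L=\sum_k w_kw_k^*$ does not need to be a spectral one; any Gram-type factorization of the positive semidefinite $n^2\times n^2$ matrix works, which slightly simplifies the write-up. With those points tidied, the proof is complete and uses no ingredient beyond finite-dimensional spectral theory, exactly as you claim.
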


In one variable, since $Df(X)[H]$ is completely positive in $H,$ Theorem \ref{choi} is used to derive the equation \eqref{sosDer} locally, by establishing the $V_i$ are derived from free polynomials $u_i(X).$
So for each $X$,
\beq\label{sosDer2}
	Df(X)[H] = \sum_k u_k(X)\ad H u_k(X).
\eeq
The theory of establishing equation \eqref{sosDer2} for locally monotone functions corresponds to Lemma \ref{finitehamburger}.

To obtain a global version of \eqref{sosDer2}, we introduce the \emph{free coefficient Hardy space} $H^2_d$, the set of free power series in $d$ variables with coefficients in $l^2.$
The free coefficient Hardy space generalizes the classical Hardy space $H^2,$ the set of functions on the disk
whose power series coefficients are in $l^2.$ The classical Hardy space has many important properties, however most important to us
are the Szeg\"o kernels, $k_x \in H^2$ such that $\ip{f}{k_x}= f(x).$
We establish the theory of Szeg\"o kernels for $H^2_d.$ For a given $X \in \M^d_n$ we obtain $k^{ij}_X \in H_d^2$
such that $\ip{f}{k^{ij}_X}= f(X)_{ij}$ for $i,j \leq n,$ the \emph{Szeg\"o kernels at $X$.}
The general theory of the free coefficient Hardy space is given in Section \ref{fchardy}. 
The theory of free coefficient Hardy space codifies previously studied geometry of a vector of monomials, which was
used in the proof that positive free polynomials are sums of squares \cite[Section 2]{heltonPositive} and the proof of the noncommutative Schwarz lemma, as the \emph{noncommutative Fock space}\cite{hkms09}.
For example, in one variable, define $m_X$ to be the list of monomials written as a column vector. That is,
	$$m_X = \bpm 1 \\ X \\ X^2 \\ \vdots \epm.$$
For any $f\in H^2_1,$ there is a $u\in l^2$ such that $$f(X) = u^*m_X.$$
Via this duality, the theory of $m_X$ is, for our purposes, the theory of the free coefficient Hardy space. For this reason, we formally adopt the view that
$$H^2_1 = l^2(\{0,1,2,\ldots \})$$
so that for any $f = (c_i)^\infty_{i=0} \in H^2_1,$ and $X$ such that $\|X\|<1,$ the evaluation of $f$ at $X$ is defined via the formula
	$$f(X) = \sum^{\infty}_{i=0} c_i X^i.$$
The free coefficient Hardy space has appeared in operator theory as the \emph{noncommutative Hardy space}, where Popescu established the theory of composition operators \cite{popescuhardy}.

Note that each $u_k(X)$ from formula \eqref{sosDer2} can be written in the form of a tensored inner product,
\beq
u_k(X) = (\flattensor{v_k^*}{I})
\bpm 1 \\ X \\ X^2 \\ \vdots \epm
= \bpm v_{k1}I & v_{k2}I & v_{k3}I & \ldots \epm
\bpm 1 \\ X \\ X^2 \\ \vdots \epm
\eeq
where $v_k \in H_1^2$ is the vector of coefficients the polynomial of $u_k.$
The decomposition allows equation \eqref{sosDer2} to be written in the form of equation \eqref{hampos2},
\beq \label{hambonta2}
Df(X)[H] =
\bpm 1 \\ X \\ X^2 \\ \vdots \epm^* \bbm a_{11}H & a_{12}H & a_{13}H & \ldots \\ a_{21}H & a_{22}H & \phantom\ldots & \ldots \\ a_{31}H & \phantom a &  & \phantom a  \\ \vdots & \vdots & \phantom a &  \ddots \ebm \bpm 1 \\ X \\ X^2 \\ \vdots \epm,
\eeq
where the matrix $(a_{ij})_{ij}$ is positive semidefinite since $(a_{ij})_{ij} = \sum v_k v_k\ad.$

The matrix $(a_{ij})_{ij}$ in formula \eqref{hambonta2} is unique when restricted the space
spanned by the Szeg\"o kernels at $X.$
As the spectrum of $X$ grows, the Szeg\"o kernels exhaust the space and we obtain one matrix
$(a_{ij})_{ij}$ that satisfies equation \eqref{hambonta} for all $X$ that must agree with equation \eqref{hampos2} by uniqueness.
Since $(a_{ij})_{ij}$ is positive, equation \eqref{hampos} is satisfied, which allows us to conclude that $f$ extends to the upper half plane as a Pick function.
The full construction is given in Section \ref{construction}.

\subsection{The structure of the paper}

The paper is structured as follows. In Section \ref{background} we describe free analysis in detail and how it relates to the classical
functional calculus. In Section \ref{foundations}, we develop the foundations of our paper. The begins with a discussion of
the free coefficient Hardy space. Then, we establish the \emph{lurking isometry argument,} a tool
to represent functions.
In Section \ref{LOWNER}, we prove L\"owner's theorem, using the method described above.
In Section \ref{NevSect}, we prove our Nevanlinna representations, and characterize them using asymptotic behavior.

\section{Background} \label{background}
We fix $\hilbert$ to be a infinite-dimensional separable Hilbert space.

\subsection{Free analysis}

\subsubsection{Matrix universe}
	Let $\M_n(\mathbb{C})$ denote the $n \times n$ matrices over $\mathbb{C}.$
	We define the \emph{matrix universe} to be
		$$\M = \bigcup \M_n(\mathbb{C}),$$
	and	the \emph{$d$-dimensional matrix universe} by
		$$\M^d = \bigcup \M_n(\mathbb{C})^d.$$
	Most function theory is proven for maps from $\M^d$ to $\M^{d'}.$ However, we require a slight generalization of this calculus.
	Let $\vecspace$ be a vector space over $\mathbb{C}.$
	We define the \emph{$\vecspace$ matrix universe} as 
		$$\tensor{\vecspace}{\M} = 	\bigcup \tensor{\vecspace}{\M_n(\mathbb{C})}.$$
		
	Similarly, we define the \emph{real matrix universe} to be
		$$\RR = \{X\in \M | X = X\ad\},$$
and	the \emph{real $d$-dimensional matrix universe} by
		$$\RR^d = \bigcup \RR_n^d.$$
	Let $\vecspace$ be a vector space over $\R.$
	We define the \emph{real $\vecspace$ matrix universe}
		$$\tensor{\mathcal V}{\RR} = 	\bigcup \tensor{\vecspace}{\RR_n}.$$
	
	The \emph{vertical tensor} decomposition of data reflects the structure of our arguments.
	We take the opinion that the first slot contains \emph{extrinsic data,}
	data related to the evaluation of functions, and that the second slot contains
	\emph{intrinsic data,} information about the input and output of functions.
	Data that has been decomposed into its intrinsic and extrinsic parts will be presented visually as
		$$\bigtensor{\text{Extrinsic data}}{\text{Intrinsic data}}$$
	for organizational purposes. We have denoted our tensor products vertically to make formulas more clear.
	For example, we desire
		$$\tensor{A}{B}\tensor{C}{D} = \tensor{AC}{BD}.$$
	Moreover, we will often encounter expressions of the form of Equation \ref{hampos2}, which can be rewritten in vertical tensor notation as
	$$\bpm 1 \\ X \\ X^2 \\ \vdots \epm^* \tensor{\bbm a_1 & a_2 & a_3 & \ldots \\ a_2 & a_3 & \phantom\ldots & \ldots \\ a_3 & \phantom a &  & \phantom a  \\ \vdots & \vdots & \phantom a &  \ddots \ebm}{H} \bpm 1 \\ X \\ X^2 \\ \vdots \epm$$
	which is symmetric. We believe adding the visual symmetry induced by adopting vertical tensor notation will make our arguments more clear.

	The matrix universe $\M$ has a direct sum, and this is inherited by the tensor product as follows on
	simple tensors
		$$ \tensor{A}{B} \oplus \tensor{C}{D} = \tensor{A}{B\oplus 0} + \tensor{C}{0 \oplus D}$$
	and is extended by linearity on the entire tensor product.
	
	We will sometimes use \emph{flat tensors} of two pieces of intrinsic or extrinsic data.
	The flat tensor $\otimes$ is implemented so that it is right distributive over $\oplus,$
			$$A \otimes (B \oplus C) = (A \otimes B) \oplus (A \otimes C)).$$

\subsubsection{Free maps}
	A \emph{free set} $D \subseteq \tensor{\vecspace}{\M}$ satisfies the following axioms
		\begin{enumerate}
			\item $A, B\in D \Leftrightarrow A \oplus B \in D,$
			\item $\forall A \in D\cap \tensor{\mathcal V}{\M_n(\mathbb{C})},
			\forall U\in \textrm{U}_n(\mathbb{C}),
			\tensor{I}{U^*}A \tensor{I}{U}\in D.$
		\end{enumerate}
	
	A \emph{free domain} $D \subset \M^d$ is a free set in the $d$-dimensional matrix universe that is open in the disjoint union topology.
	Given two free sets $D$ and $D',$ a \emph{free map}
	$f: D \rightarrow D'$ is a map so that the graph of $f$ is a free set and
	if for some invertible matrix $S$, $A \in D,$ and $\tensor{I}{S^{-1}}A \tensor{I}{S}\in D,$ then
	$$f(\tensor{I}{S^{-1}}A \tensor{I}{S}) = \tensor{I}{S^{-1}}f(A ) \tensor{I}{S}.$$
\subsubsection{Real free maps}\label{realfreemaps}
	A \emph{real free set} $D \subseteq \tensor{\vecspace}{\RR}$ satisfies the following axioms
	\begin{enumerate}
		\item $A, B\in D \Leftrightarrow A \oplus B \in D$
		\item $A \in D\cap \tensor{\vecspace}{\RR_n},
		U\in \textrm{U}_n(\mathbb{C})  \Rightarrow \tensor{I}{U^*}A \tensor{I}{U}\in D$
	\end{enumerate}	
	
	A \emph{real free domain} is a real free set  in the $d$-dimensional matrix universe, $D \subset \RR^d,$
	that is open in the disjoint union topology
	restricted to self-adjoint matrices. A \emph{real free map} $f: D \rightarrow D'$
	is a map so that the graph of $f$ is a real free set and if for some unitary $U,$
	$A \in D,$ and $\tensor{I}{U^*}A \tensor{I}{U}\in D,$ then
	$$f(\tensor{I}{U^*}A \tensor{I}{U}) = \tensor{I}{U^*}f(A ) \tensor{I}{U}.$$
	
	We define the \emph{complexified tangent bundle} of a domain as follows, noting that we need the vector component of the tangent bundle to have the same dimension as the point over which it lies. Let $D \subset \RR^d$ be a real free domain (resp. free domain). The complexified tangent bundle (resp. tangent bundle) is the set $T(D)$ given by the formula
		$$T(D) = \bigcup_n D_n \times \M_n^d,$$
	where $D_n = D\cap \M_n^d$.
	
	We now begin the discussion of power series and real analyticity. We adopt the convention that if $w$ is a word in the letters $x_1, \ldots x_k,$ and
	$X = (X_1, \ldots X_k),$ then $X^w = w(X).$ For example $X^{x_1x_2x_1} = X_1X_2X_1.$
	Furthermore, we define an involution $*$ on words which reverses their letters. For example
	$(x_1x_2)^* = x_2x_1.$ We use $|w|$ to denote the length of a word.

	To discuss analyticity, we first define a meaningful way to talk about
	local coordinates.
	\begin{definition}
		A \emph{$n$-frame} is a basis $F = (F_i)^{dn^2}_{i=1}$ of $\RR^d_n$ such that
		$(F_i)_j$ is positive semidefinite over all $i, j$, where $(F_i)_j$ is the matrix in the $j$th slot of the $i$th basis element.
		
		We denote the \emph{local coordinate function} with respect to $F$ as
			$$X_F =  \sum^{dn^2}_{i=1} \flattensor{F_{i}}{X_{i}}.$$
	\end{definition}
	
	A real free map is \emph{analytic} if for each point $A_0 \in D \cap \RR_n^d$,
	the function of $dn^2$ noncommuting variables has a power series for each of its noncommuting entries.
	\begin{definition}
		Let $f: D \rightarrow \RR$ be a real free function.
		We say $f$ is \emph{real analytic} if for any $A_0 \in D \cap \RR_n^d$, $n$-frame $F$,
		and vectors $u, v\in \mathbb{C}^n$,
		there are $c_I \in \C$ such that
			$$\flattensor{u^*}{I} f(\flattensor{A_0}{I} + X_F) \flattensor{v}{I} = \sum_I c_IX^I$$
		for all $X \in \RR^{dn^2}$ such that $\|X\|<\epsilon.$
	\end{definition}
	
	We note that the definition of a real free analytic map $f$ implies that for each $A_0\in D,$ there is
	a free domain $D',$ the domain of convergence of the power series for $f$ at $A_0$,
	containing $A_0$ such that f analytically continues to $D'\cap D$ as a function
	of the entries of the input as tuples of matrices. However, \emph{a priori},
	it is not clear that the continuation of $f$ is a free function. 
	This issue is resolved by the following lemma combined with observation that the derivative is a linear map.
	\begin{lemma}\label{commutator}
		Let $D \subset \M_n^d.$ Let $f: D \rightarrow \M_n$ be a differentiable function.
		Let $[\cdot,\cdot]$ denote the commutator. (That is, $[X,Y]=XY-YX.$)
		\begin{enumerate}
			\item $f$ respects unitary similarity if and only if for all $A \in \RR_n$
				$$Df(X)[[iA,X]] = [iA,f(X)].$$
			\item f respects similarity if and only if for all $T\in \M_n$
				$$Df(X)[[T,X]] = [T,f(X)].$$
		\end{enumerate}
	\end{lemma}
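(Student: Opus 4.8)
The plan is to prove both equivalences with one device: differentiate the functional equation of $f$ along a one‑parameter family of conjugations, which turns the integrated similarity property into the infinitesimal commutator identity, and run the same computation in reverse to go back. I would carry out part (1) in full and obtain part (2) by the verbatim argument, replacing the self‑adjoint generator $A$ by an arbitrary $T\in\M_n$ and the unitary one‑parameter group $e^{-\ii tA}$ by the invertible one‑parameter group $e^{-tT}\subset GL_n$.

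For the forward direction of (1): fix $X\in D$ and $A=A\ad\in\RR_n$, put $U_t=e^{-\ii tA}$, and compute (entrywise on the $d$-tuple) that
\[
\frac{d}{dt}\Big|_{t=0}U_t\ad X U_t=\frac{d}{dt}\Big|_{t=0}e^{\ii tA}Xe^{-\ii tA}=[\ii A,X].
\]
Since $D$ is open, $U_t\ad X U_t\in D$ for $|t|$ small, so I may differentiate $f(U_t\ad X U_t)=U_t\ad f(X)U_t$ at $t=0$; the chain rule sends the left side to $Df(X)[[\ii A,X]]$ and the right side to $[\ii A,f(X)]$. As $A$ ranges over $\RR_n$ and $X$ over $D$, this is the claimed identity. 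Part (2) is identical: with $S_t=e^{-tT}$ one has $\frac{d}{dt}|_{t=0}S_t\inv X S_t=[T,X]$, and differentiating $f(S_t\inv X S_t)=S_t\inv f(X)S_t$ gives $Df(X)[[T,X]]=[T,f(X)]$.

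For the converse of (1): suppose $Df(X)[[\ii A,X]]=[\ii A,f(X)]$ for all $X\in D$ and all $A=A\ad$. Given $X\in D$ and a unitary $U=e^{\ii A}$ with $UXU\ad\in D$, set $Y_t=e^{\ii tA}Xe^{-\ii tA}$ and, assuming the arc $Y_t$ stays in $D$ for $t\in[0,1]$, consider
\[
\psi(t)=e^{-\ii tA}f(Y_t)e^{\ii tA}.
\]
Since $\dot Y_t=[\ii A,Y_t]$, the chain rule together with the hypothesis yields $\dot\psi(t)=e^{-\ii tA}\big(Df(Y_t)[[\ii A,Y_t]]-[\ii A,f(Y_t)]\big)e^{\ii tA}=0$, so $\psi$ is constant, and $\psi(1)=\psi(0)$ reads $f(UXU\ad)=Uf(X)U\ad$; replacing $U$ by $U\ad$ gives the stated form. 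Part (2) is the same with $e^{tT}$ in place of $e^{\ii tA}$, producing $f(SXS\inv)=Sf(X)S\inv$, where every invertible $S$ is reached as $e^{T}$ because $\exp\colon\M_n(\C)\to GL_n$ is onto.

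The point genuinely requiring care is the domain condition in the converse: the arc $Y_t$ of conjugates of $X$ must stay in $D$ throughout $[0,1]$ for $f(Y_t)$ to be defined. In the unitary case this is automatic as soon as $D$ is closed under unitary conjugation, since the unitary group is path‑connected through such conjugations (write $U=e^{\ii A}$ and note $e^{\ii tA}Xe^{-\ii tA}$ stays in the orbit of $X$). In the general similarity case it is not automatic---$SXS\inv\in D$ does not force the intermediate points of $e^{tT}Xe^{-tT}$ into $D$---so the similarity conclusion must be read on the connected component of $X$ inside $\{RXR\inv:R\in GL_n\}\cap D$, which is all that is needed downstream, since it is the (analytically continued, linear) derivative $Df$ that is being propagated. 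The only remaining point, entirely routine, is the validity of the chain rule $\frac{d}{dt}f(\gamma(t))=Df(\gamma(t))[\dot\gamma(t)]$ for the differentiable $f$ in play (Fréchet differentiability in the free‑analytic setting makes this immediate).
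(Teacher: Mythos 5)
Your proof is correct and follows the same strategy as the paper: differentiate the conjugation identity $f(U_t^*XU_t)=U_t^*f(X)U_t$ along the one-parameter group to get the commutator identity, and integrate back (here, by showing $\dot\psi\equiv 0$) for the converse. Your packaging of the converse via the auxiliary function $\psi(t)=e^{-\ii tA}f(Y_t)e^{\ii tA}$ is in fact a cleaner rendering of the paper's fundamental-theorem-of-calculus sketch, and your explicit remark that the arc must remain in $D$ (automatic for unitary conjugation when $D$ is closed under it, but requiring a connected-component caveat in the general similarity case) addresses a point the paper leaves implicit.
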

	\begin{proof}
		Suppose $f$ respects unitary similarity.
		Let $A \in \RR_n$
		Since $f$ respects unitary similarity,
			$$e^{-itA}f(X)e^{itA}= f(e^{-itA}Xe^{itA}).$$
		Differentiating this equation at $0$ via the chain rule gives
			$$Df(X)[[iA,X]] = [iA,f(X)].$$
			
		The fundamental theorem of calculus proves the converse.
		That is,
			since
				$$\frac{d}{dt} f(e^{-itA}Xe^{itA}) = Df(e^{-itA}Xe^{itA})[[iA,e^{-itA}Xe^{itA}]]$$
			and
				$$\frac{d}{dt} e^{-itA}f(X)e^{itA} = [iA,e^{-itA}f(X)e^{itA}],$$
			then
				$$Df(e^{-itA}Xe^{itA})[[iA,e^{-itA}Xe^{itA}]] = [iA,e^{-itA}f(X)e^{itA}]$$
		implies $$f(e^{-itA}Xe^{-itA}) = e^{-itA}f(X)e^{itA} + C$$ and $C$ must be $0$ since evaluating at $0$
		gives $f(X) = f(X) + C.$ Since, every unitary is of the form $e^{iA}$ for some self-adjoint $A$,
		(see \cite[Chapter 2]{helgason})
		we are done.
			
		The proof of $2$ is similar and is left to the reader.
	\end{proof}
	Since the derivative
	is complex linear, Lemma \ref{commutator} immediately implies the following. 
	\begin{corollary}\label{commieCor}
		 A complex analytic real free function is a free function. 
	\end{corollary}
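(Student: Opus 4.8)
The plan is to promote unitary‑similarity equivariance to full similarity equivariance using only the complex linearity of the derivative, so that the corollary falls straight out of Lemma~\ref{commutator}. A real free function already respects direct sums and unitary similarity; since a free map is exactly a map whose graph is a free set (closed under direct sums and unitary conjugation) and which in addition intertwines conjugation by invertibles, the sole remaining point is to show that $f$ respects conjugation by an arbitrary $S\in GL_n$. By part~(2) of Lemma~\ref{commutator} that amounts to the identity
\[
Df(X)\big[[T,X]\big]=[T,f(X)]\qquad\text{for every }T\in\M_n,
\]
whereas part~(1) of the lemma, applied to the given unitary equivariance of $f$, hands us this identity only for the skew‑Hermitian parameters $T=iA$ with $A=A\ad$.

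First I would fix a base point $A_0\in D$ and pass to the holomorphic continuation $\tilde f$ furnished by complex analyticity: the power series representing the entries of $f$ near $A_0$ converge on, and define a holomorphic function $\tilde f$ on, a connected complex free neighborhood $D'\ni A_0$, with $\tilde f$ agreeing with $f$ on the real points of $D'$. In particular $D\tilde f(X)$ is a $\C$‑linear map of the direction for each $X\in D'$. Applying Lemma~\ref{commutator}(1) to $f$ on the real domain gives $Df(X)[[iA,X]]=[iA,f(X)]$ for real $X$ and $A=A\ad$. Both sides extend to holomorphic functions of $X$ on $D'$, namely $X\mapsto D\tilde f(X)[[iA,X]]$ and $X\mapsto[iA,\tilde f(X)]$, so by the identity theorem they agree throughout $D'$:
\[
D\tilde f(X)\big[[iA,X]\big]=[iA,\tilde f(X)]\qquad\text{for all }X\in D',\ A=A\ad.
\]

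The key step is now purely linear‑algebraic. Fix $X\in D'$. The map $A\mapsto D\tilde f(X)[[iA,X]]$ is the $\C$‑linear map $A\mapsto i(AX-XA)$ followed by the $\C$‑linear map $D\tilde f(X)$, and $A\mapsto[iA,\tilde f(X)]=i(A\tilde f(X)-\tilde f(X)A)$ is visibly $\C$‑linear; by the previous display these two $\C$‑linear maps agree for every self‑adjoint $A$. Since the self‑adjoint matrices span $\M_n$ over $\C$ — every $T$ equals $\tfrac12(T+T\ad)+i\cdot\tfrac1{2i}(T-T\ad)$, a $\C$‑combination of two self‑adjoints — the two maps agree for all $A\in\M_n$; setting $T=iA$ yields $D\tilde f(X)[[T,X]]=[T,\tilde f(X)]$ for every $T\in\M_n$ and every $X\in D'$. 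By Lemma~\ref{commutator}(2), $\tilde f$ respects similarity on $D'$; restricting to real points and letting $A_0$ range over $D$ shows $f$ respects similarity, so $f$ is a free function.

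The only places calling for care — and the only places the hypothesis of complex analyticity is really used — are the passage to the holomorphic continuation $\tilde f$ (one needs the power series of $f$ to converge on a genuine complex neighborhood of each base point, not merely on a ball of self‑adjoints) and the routine bookkeeping of assembling the local continuations and propagating the local equivariance to a global statement. Granting these, the argument above is the short formal computation the excerpt alludes to when it says the corollary follows immediately from Lemma~\ref{commutator}.
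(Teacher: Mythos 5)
Your argument is correct and follows the same route the paper intends: a real free function already handles direct sums and unitary similarity, so by Lemma~\ref{commutator}(1) the identity $Df(X)[[iA,X]]=[iA,f(X)]$ holds for all self-adjoint $A$; once the derivative is $\C$-linear (the content of ``complex analytic''), the self-adjoints $\C$-span $\M_n$ and the identity extends to all $T\in\M_n$, giving similarity equivariance via Lemma~\ref{commutator}(2). You simply make explicit the bookkeeping the paper elides under ``Since the derivative is complex linear'' — passage to the holomorphic continuation on a complex free neighborhood and the use of the identity theorem to carry the derivative identity from real to complex points — so this is the paper's argument carried out carefully rather than a different one.
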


	\subsubsection{The real free derivative identity}
	
	In the work of Helton, Klep, and McCullough \cite{helkm11},
	Voiculescu \cite{voi04,voi10} and Kaliuzhnyi-Verbovetskyi and Vinnikov \cite{vvw12}, a number of identities have
	been proven to compute derivatives of free functions.
	We will need the following two identities for real free functions. The following identity is given for commutative matrix functions in the proof of the commuting multivariable L\"owner theorem of Agler, {\mc}Carthy and Young \cite{amyloew} and proven in \cite{bickel13}.
	\begin{proposition}\label{derUnit}
	Let $f: D \rightarrow \M$ be a differentiable real free function.
		$$Df\bpm
			X & \\
			 & Y	
		\epm \bbm
			 & X-Y \\
			 X-Y & 	
		\ebm =
		\bpm
			 & f(X)-f(Y)\\
			 f(X)- f(Y) & 	
		\epm$$
	\end{proposition}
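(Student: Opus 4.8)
The plan is to reduce the identity to a one-variable statement by restricting $f$ along a path and using the direct-sum property together with similarity. First I would set $Z_t = \bpm X & \\ & Y\epm + t\bbm 0 & X-Y \\ X-Y & 0\ebm$ and observe that $Z_0 = \bpm X & \\ & Y\epm$, so that the left-hand side of the claimed identity is $\frac{d}{dt}\big|_{t=0} f(Z_t)$. The key algebraic observation is that $Z_t$ is \emph{unitarily} (indeed orthogonally) equivalent to a direct sum: writing $R_\theta = \bpm \cos\theta & \sin\theta \\ -\sin\theta & \cos\theta\epm\otimes I$ and noting $R_\theta \bpm X & \\ & Y\epm R_\theta\ad$ has off-diagonal blocks proportional to $X-Y$, one checks that for a suitable one-parameter family of rotations $U_t$, $U_t\bpm X & \\ & Y\epm U_t\ad = Z_t$ to first order in $t$, with $\frac{d}{dt}\big|_{t=0} U_t = \bpm 0 & \tfrac12 \\ -\tfrac12 & 0\epm \otimes I =: iA$. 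More directly, since $\bbm 0 & X-Y \\ X-Y & 0\ebm = \big[\,iA,\ \bpm X & \\ & Y\epm\,\big]$ with $A = \bpm 0 & -\tfrac{i}{2} \\ \tfrac{i}{2} & 0\epm\otimes I$ self-adjoint (this is the clean way: just compute the commutator of $iA$ with the block-diagonal matrix), Lemma \ref{commutator}(1) applies immediately.

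Concretely, the second step is: apply part (1) of Lemma \ref{commutator} with $X$ there replaced by $\bpm X & \\ & Y\epm$ and $A$ as above. Since $f$ is a real free function it respects unitary similarity, so the lemma gives
\[
Df\bpm X & \\ & Y\epm\big[[iA,\,\bpm X & \\ & Y\epm]\big] = [iA,\,f\bpm X & \\ & Y\epm].
\]
Now $f$ respects direct sums, so $f\bpm X & \\ & Y\epm = \bpm f(X) & \\ & f(Y)\epm$. The final step is a direct computation of both commutators: $[iA, \bpm X & \\ & Y\epm] = \bbm 0 & X-Y \\ X-Y & 0\ebm$ and $[iA, \bpm f(X) & \\ & f(Y)\epm] = \bbm 0 & f(X)-f(Y) \\ f(X)-f(Y) & 0\ebm$, which is exactly the asserted identity.

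The only point requiring care — the "main obstacle," though it is mild — is bookkeeping the normalization of $A$ so that the commutator produces the off-diagonal block $X-Y$ with coefficient exactly $1$ (and not $2i$ or $\tfrac12$), and confirming that $A$ as chosen is genuinely self-adjoint so that Lemma \ref{commutator}(1), rather than the more general part (2), may be invoked; using part (2) with $T = \bpm 0 & 1 \\ 0 & 0\epm\otimes I$ also works but is slightly less clean. One should also note that the block-diagonal point $\bpm X & \\ & Y\epm$ lies in $D$ by the direct-sum axiom for the (real) free set $D$, and that its orbit under the relevant unitaries stays in $D$, so the hypotheses of Lemma \ref{commutator} are met and the differentiation is legitimate; since the identity is purely pointwise in $(X,Y)$ with both in $D$, no further domain considerations arise.
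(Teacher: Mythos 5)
Your proof follows the same route as the paper's own: apply Lemma \ref{commutator}(1) at the block-diagonal point $\bpm X & 0 \\ 0 & Y \epm$ with a block-antidiagonal self-adjoint $A$, use that $f$ respects direct sums so $f\bpm X & 0 \\ 0 & Y \epm = \bpm f(X) & 0 \\ 0 & f(Y) \epm$, and then compute the two commutators. On the normalization you singled out as the main point of care, there is a slip: with $A = \bpm 0 & -i/2 \\ i/2 & 0 \epm \otimes I$ one has $iA = \bpm 0 & 1/2 \\ -1/2 & 0 \epm \otimes I$, and then $\bigl[iA,\ \bpm X & 0 \\ 0 & Y \epm\bigr] = \tfrac12\bpm 0 & Y-X \\ Y-X & 0 \epm$, which is $-\tfrac12$ times what you claimed; the choice yielding coefficient exactly $1$ is $A = \bpm 0 & i \\ -i & 0 \epm \otimes I$. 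This is harmless because $Df(X)[\cdot]$ is linear, so one rescales by $-2$ at the end (the paper's own $A = \bpm 0 & -i \\ i & 0 \epm$ likewise gives direction $Y-X$ rather than $X-Y$, also absorbed by a sign). Finally, the alternative you float --- part (2) of Lemma \ref{commutator} with $T = \bpm 0 & 1 \\ 0 & 0 \epm \otimes I$ --- is not actually available: a real free function is only assumed to respect unitary similarity, not general similarity, and moreover $[T, \bpm X & 0 \\ 0 & Y \epm] = \bpm 0 & Y-X \\ 0 & 0 \epm$ is not self-adjoint, hence not even a legitimate direction in the real tangent space, and is not the symmetric direction appearing in the statement.
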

	\begin{proof}
		Let
		$A =
		\bpm
			0 & -i \\
			i & 0
		\epm.$
		By Lemma \ref{commutator}
		$$Df\bpm
			X & 0 \\
			0 & Y
			\epm \left[
			\left[iA,\bpm
			X & 0 \\
			0 & Y
			\epm\right]\right]
			=
			\left[iA,\bpm
			f(X) & 0 \\
			0 & f(Y)
			\epm\right].
		$$
		Substituting $A$ and simplifying obtains the desired result.
		$$Df\bpm
			X & \\
			 & Y	
		\epm\bbm
			 & X-Y \\
			 X-Y & 	
		\ebm =
		\bpm
			 & f(X)-f(Y)\\
			 f(X)- f(Y) & 	
		\epm.$$
		
	\end{proof}

\subsubsection{Dominating points}
	Given a finite set of ordered pairs
		$$(x_1,y_1), \ldots (x_n,y_n) \in \R^2,$$
	such that all $x_i$ are distinct,
	there is a real analytic function $f:\mathbb{R} \rightarrow \mathbb{R}$
	such that $f(x_i) = y_i.$
	
	In the free setting this assertion is no longer true.
	For example, the value of a function $f$ at
		$$X = \bpm
			1 &  &\\
			  & 2 & \\
			    & & 3
		\epm$$
	completely determines the value at the point
		$$Y = \bpm
			1 &  \\
			  & 2 
		\epm$$
	since
	$$f\bpm
			1 &  &\\
			  & 2 & \\
			    & & 3
	\epm
	=
	\bpm
			f\bpm 1 &  \\
			  & 2 \epm & \\ 
			     & f(3)
	\epm.$$

	We now draw on the notion of dominating points which was used in Helton and McCullough \cite{heltmc12}.
	\begin{definition}
		Let $X, Y \in \RR^d.$ We say \emph{$X$ dominates $Y$}  if
			$f(X) = 0$ implies that $f(Y)= 0$.
	\end{definition}

	Given an $X,$ we desire to find an algebraically well-conditioned point which dominates $X.$ 
	\begin{definition}
		Let $X \in \RR^d.$ We say that $X$ is \emph{reduced} if
		the algebra generated by $X_1, \ldots X_d$ is equal to 
		 	 $$\bigoplus_i M_{n_i}(\mathbb{C})$$
		for some finite sequence of integers $n_i.$
	\end{definition}	
	
	Elementary techniques from the theory of finite dimensional C$\ad$-algebras provide a reduced dominating point for a given $X.$
	\begin{lemma} \label{domReduction}
		Let $X \in \RR^d.$ There exists an $X^0 \in \RR^d$ such that
		$X^0$ dominates $X,$ $X$ dominates $X^0$, and
		$X^0$ is reduced.
	\end{lemma}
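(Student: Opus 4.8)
The plan is to build $X^0$ explicitly from the finite-dimensional $C^*$-algebra generated by $X$ together with the identity. Let $\mathcal{A} = \mathrm{alg}(I, X_1,\ldots,X_d) \subset M_n(\C)$, a unital $*$-closed subalgebra of $M_n(\C)$ since each $X_i$ is self-adjoint. By the structure theorem for finite-dimensional $C^*$-algebras, there is a $*$-isomorphism $\pi: \mathcal{A} \to \bigoplus_{i=1}^r M_{n_i}(\C)$, and since both $\mathcal{A}$ and the target are concretely represented on Hilbert spaces, $\pi$ is implemented by a unitary after passing to a multiplicity-adjusted representation: concretely, the inclusion $\mathcal{A} \hookrightarrow M_n(\C)$ decomposes, up to unitary conjugation, as a direct sum of amplifications of the irreducible blocks, so that $X_k$ is unitarily equivalent to $\bigoplus_i (Y_k^{(i)} \otimes I_{m_i})$ where $Y_k^{(i)} \in M_{n_i}(\C)$ is self-adjoint and $m_i$ is the multiplicity of the $i$-th block. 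Set $X^0_k = \bigoplus_i Y_k^{(i)}$, so $X^0 = (X^0_1,\ldots,X^0_d) \in \RR^d$ is reduced by construction, its generated algebra being exactly $\bigoplus_i M_{n_i}(\C)$.

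Next I would verify mutual domination. First, $X$ is unitarily equivalent to $\bigoplus_i (X^0 \text{ restricted to block } i) \otimes I_{m_i}$, which is a unitary conjugate of a direct sum of copies of (pieces of) $X^0 \otimes I$; since free functions respect direct sums and unitary similarity, $f(X^0) = 0$ forces $f$ to vanish on each block and each amplification, hence $f(X) = 0$. So $X^0$ dominates $X$. Conversely, each irreducible block $(Y_1^{(i)},\ldots,Y_d^{(i)})$ appears as a unitary compression corresponding to a direct summand inside the decomposition of $X$: more precisely, $X^0$ is a unitary conjugate of a direct sum of corners of $X$ that are themselves direct summands (take $m_i = 1$ multiplicity slices), so $f(X) = 0 \Rightarrow f(X^0) = 0$ by the same direct-sum/unitary-invariance properties. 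Thus $X$ dominates $X^0$, and the lemma follows.

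The main obstacle — and the step requiring genuine care rather than bookkeeping — is the passage from the abstract structure theorem to a concrete unitary equivalence exhibiting $X$ as amplifications of $X^0$'s blocks. The point is that an abstract $*$-isomorphism $\mathcal{A} \cong \bigoplus M_{n_i}(\C)$ does not by itself tell us how $\mathcal{A}$ sits inside $M_n(\C)$; one must invoke that every unital $*$-representation of $\bigoplus M_{n_i}(\C)$ on $\C^n$ is, up to unitary equivalence, $\bigoplus_i (\mathrm{id}_{M_{n_i}} \otimes I_{m_i})$ for some multiplicities $m_i \geq 1$ (some possibly zero) — i.e., representation theory of finite-dimensional $C^*$-algebras. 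This is exactly what is meant by ``elementary techniques from the theory of finite dimensional $C^*$-algebras,'' so I would cite a standard reference (e.g. Davidson's $C^*$-algebras book) rather than reprove it. Once that concrete form is in hand, both domination directions are immediate from the defining properties of free functions — graded, respects direct sums, respects unitary similarity — with no analysis needed, since domination here is a purely algebraic condition about vanishing sets.
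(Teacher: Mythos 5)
Your argument is correct and matches the paper's proof essentially step for step: invoke the Artin--Wedderburn/structure theorem for the finite-dimensional $C^*$-algebra $\mathcal{A}$ generated by the coordinates of $X$ to write the inclusion $\mathcal{A} \hookrightarrow M_n(\C)$ as a unitary conjugate of a direct sum of amplified irreducible blocks, set $X^0$ to be the preimage of $X$ under the resulting isomorphism with $\bigoplus_i M_{n_i}(\C)$, and then read off both domination directions from the facts that free functions respect direct sums and unitary similarity (with $X^0$ recoverable as a direct summand of a unitary conjugate of $X$, and $X$ recoverable as a unitary conjugate of amplifications of $X^0$). The paper cites the same two structure results (Davidson, Theorems III.1.1 and III.1.2) that you identify as the ``genuine care'' step, so the two proofs agree in substance and in the load-bearing citations.
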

	\begin{proof}
	Let $\mathcal{A}$ denote the algebra generated by the components of $X.$
	Note that $\mathcal{A}$ is a $*$-algebra since the generators are self-adjoint.
	
	By an Artin-Wedderburn type theorem for finite dimensional $C^*$-algebras \cite[Theorem III.1.1]{da06},
	$\mathcal{A}\cong \bigoplus_i M_{n_i}(\mathbb{C})$ for some finite sequence of integers $n_i.$
	So let $\pi:\bigoplus_i M_{n_i}(\mathbb{C}) \rightarrow \mathcal{A}$ be a $*$-isomorphism.
	By a structure theorem in \cite[Theorem III.1.2]{da06},
	there is a unitary $U$ and integers $m_i$ such that for
	every $ \bigoplus_i Y_i\in\bigoplus_i M_{n_i}(\mathbb{C}),$ we can write
	the homomorphism via the formula
	$\pi(\bigoplus_i Y_i) = U^* \bigoplus_i (Y_i \otimes I_{m_i}) U.$
	
	Let $X^0 = \pi^{-1}(X).$ Since real free functions respect direct sums and
	unitary similarity, $X^0$ dominates $X$ and $X$ dominates $X^0.$
	Since the coordinates of $X^0$ generate $\bigoplus_i M_{n_i}(\mathbb{C}),$ $X^0$ is reduced.
	\end{proof}
	
	We now will show that if $X$ dominates $Y$ then the derivative of a function $f$ at $X$ determines
	the derivative of $f$ at $Y.$
	\begin{lemma}\label{dominationDer}
		Let $X \in \RR_n^d, Y \in \RR_m^d$ be points such that $X$ dominates $Y.$
		Let $L: \RR_n^d \rightarrow \RR_n$ be a linear map.
		There is a unique map $K: \RR_m^d \rightarrow \RR_m$ such that if
		$f$ is a real analytic free function,
		and $Df(X) = L$ as a linear map, then
		$Df(Y)= K.$
	\end{lemma}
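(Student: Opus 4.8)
The plan is to show that domination of $Y$ by $X$ forces $X$, after forming a direct-sum amplification $X^{\oplus r}:=X\oplus\cdots\oplus X$ and conjugating by a unitary, to contain $Y$ as a direct summand, and then to read off $Df(Y)$ from $Df(X)$. For the first point: free polynomials are free functions, so ``$X$ dominates $Y$'' means $p(X)=0\Rightarrow p(Y)=0$ for every free polynomial $p$, hence $p\mapsto p(X)$ factors through $p\mapsto p(Y)$ and produces a surjective unital $\ast$-homomorphism $\phi\colon\A_X\to\A_Y$ with $\phi(X_i)=Y_i$, where $\A_X,\A_Y$ are the unital (finite-dimensional $C^{\ast}$-)algebras generated by the coordinates. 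Viewing $\C^m$ as an $\A_X$-module through $\phi$ and using that $\C^n$ is a faithful module over the semisimple algebra $\A_X$ (Artin--Wedderburn, \cite{da06}), $\C^m$ embeds unitarily as an $\A_X$-submodule of $(\C^n)^{\oplus r}$ for some $r$; with $\rho$ the $\ast$-representation of $\A_X$ on the orthogonal complement and $\tilde Z:=(\rho(X_i))_i\in\RR^d$, this amounts to a unitary $U$ with $U X^{\oplus r} U^{\ast}=Y\oplus\tilde Z$. The converse is clear, so this is an equivalent formulation of domination; note also that any real free domain containing $X$ contains $X^{\oplus r}$, hence $Y\oplus\tilde Z$, hence --- being open and closed under taking direct summands --- a neighborhood of $Y$, so that $Df(Y)$ is defined for every real analytic free $f$ near $X$.

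The heart of the argument is the amplification identity $Df(X^{\oplus r})=Df(X)\otimes\mathrm{id}_{\M_r}$ for any real analytic free $f$. Since $X^{\oplus r}=(X_i\otimes I_r)_i$ is fixed under conjugation by $I_n\otimes u$ for every unitary $u\in\M_r$, differentiating $f\big((I_n\otimes u)^{\ast}W(I_n\otimes u)\big)=(I_n\otimes u)^{\ast}f(W)(I_n\otimes u)$ at $W=X^{\oplus r}$ in an arbitrary direction shows that $Df(X^{\oplus r})$ is equivariant under the conjugation action of the unitary group of $\M_r$ on the second tensor factor. As every element of $\M_r$ is a complex combination of self-adjoint matrices, each of which is unitarily conjugate to a diagonal one, equivariance and linearity reduce the computation of $Df(X^{\oplus r})$ to directions $H\otimes D$ with $D$ diagonal; on such a (block-diagonal) direction the differentiated form of ``$f$ respects direct sums'' gives $Df(X^{\oplus r})[H\otimes D]=Df(X)[H]\otimes D$, and the identity follows.

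Given real analytic free $f$ with $Df(X)=L$, differentiating $f(U X^{\oplus r}U^{\ast})=U f(X^{\oplus r})U^{\ast}$ and $f(Y\oplus\tilde Z)=f(Y)\oplus f(\tilde Z)$ and feeding in the amplification identity gives, for $V\in\RR_m^d$,
\[
Df(Y)[V]=P^{\ast}\,U\,\big(L\otimes\mathrm{id}_{\M_r}\big)\big[\,U^{\ast}(V\oplus0)U\,\big]\,U^{\ast}\,P,
\]
with $P\colon\C^m\hookrightarrow\C^m\oplus\C^{m'}$ the inclusion; the right side depends only on $L$ and the fixed data $r,U$, so $K$ can be taken to be this map, and it is the unique map with the stated property since the computation shows $Df(Y)$ is forced once $Df(X)$ is prescribed. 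The step I expect to be the real obstacle is the amplification identity, and within it the need to keep the equivariance-and-direct-sums bookkeeping honest enough to pin down all of $Df(X^{\oplus r})$ and not merely its block-diagonal part; the reformulation of domination is routine given the Artin--Wedderburn structure of $\A_X$ --- the same finite-dimensional $C^{\ast}$-algebra toolkit used for Lemma \ref{domReduction} --- and the final step is pure direct-sum/unitary bookkeeping.
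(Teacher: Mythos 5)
Your proof is correct and follows essentially the same strategy as the paper's: both rest on the tensor (amplification) identity, which the paper isolates as Proposition \ref{tenDer} and you rederive inline, together with Artin--Wedderburn structure theory for the finite-dimensional $C^{\ast}$-algebra generated by the coordinates (same reference \cite{da06}). The only difference is organizational: you realize $Y$ directly as a direct summand of a unitary conjugate of the amplification $X^{\oplus r}$, whereas the paper routes through the intermediate reduced point $X^0$ of Lemma \ref{domReduction} and then maps onward to $Y$; the underlying mathematics and the final formula for $K$ are the same.
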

	The proof of the lemma follows immediately from the following free identity.
	\begin{proposition} \label{tenDer}
		Let $f: D \rightarrow \M$ be a differentiable real free function.
		Let $A \in \RR^d$ and $B\in \RR$.
		$$Df(X \otimes I_n)[A \otimes B] =
		Df(X)[A]\otimes B.$$
	\end{proposition}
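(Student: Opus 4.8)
\emph{Proof proposal.} The plan is to prove the identity first in the special case that $B$ is diagonal, where it collapses to the statement that $f$ respects direct sums, and then to bootstrap to arbitrary self-adjoint $B$ by conjugating with a unitary that diagonalizes $B$.

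For the diagonal case I would fix the ordering of the tensor factors so that, for a matrix $M$ acting on $\C^m$ and $N$ on $\C^n$, $M\otimes N$ is the block matrix whose $(k,l)$ block is $N_{kl}M$. With this convention, if $B=\diag(\lambda_1,\dots,\lambda_n)$ with $\lambda_k\in\R$ and $X\in\RR_m^d$, then $X\otimes I_n=\bigoplus_{k=1}^n X$ and $A\otimes B=\bigoplus_{k=1}^n \lambda_k A$, so for small real $t$ the perturbed tuple satisfies $X\otimes I_n+t(A\otimes B)=\bigoplus_{k=1}^n (X+t\lambda_k A)$. Since $D_m=D\cap\RR_m^d$ is open and $X\in D_m$, each summand $X+t\lambda_k A$ lies in $D_m$ for $|t|$ small, whence the direct sum lies in $D$ because $D$ is closed under direct sums; as $f$ respects direct sums this gives $f\big(\bigoplus_k(X+t\lambda_k A)\big)=\bigoplus_k f(X+t\lambda_k A)$. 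Differentiating at $t=0$ and using linearity of the G\^ateaux derivative yields $Df(X\otimes I_n)[A\otimes B]=\bigoplus_k \lambda_k\,Df(X)[A]=Df(X)[A]\otimes B$.

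For general self-adjoint $B$, write $B=UDU\ad$ with $U$ unitary and $D$ real diagonal, and set $W=I_m\otimes U$. Then $W\ad(X\otimes I_n)W=X\otimes(U\ad I_nU)=X\otimes I_n$ and $W\ad(A\otimes B)W=A\otimes D$. Since $f$ is a real free map it satisfies $f(W\ad ZW)=W\ad f(Z)W$ on a neighborhood of $X\otimes I_n$, and differentiating this in the direction $A\otimes B$ gives $Df(X\otimes I_n)[A\otimes D]=W\ad\,Df(X\otimes I_n)[A\otimes B]\,W$. By the diagonal case the left-hand side equals $Df(X)[A]\otimes D$, so conjugating back by $W$ recovers $Df(X\otimes I_n)[A\otimes B]=Df(X)[A]\otimes(UDU\ad)=Df(X)[A]\otimes B$, as desired.

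I do not expect a genuine analytic obstacle here; the only things demanding care are bookkeeping ones — pinning down a tensor-ordering convention so that $A\otimes B$ is literally a direct sum when $B$ is diagonal, and verifying that the perturbed tuples remain inside $D$, which is precisely where openness of $D$ at each level and closure under direct sums are used. Everything else follows from the two defining properties of real free maps, respect for direct sums and for unitary similarity, together with linearity of the derivative.
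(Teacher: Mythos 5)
Your proof is correct and follows essentially the same route as the paper: diagonalize $B$ by a unitary, use invariance under unitary similarity to reduce to the case of diagonal $B$, then use the direct-sum structure and linearity of the derivative; the paper simply runs the conjugation step first and folds the diagonal argument into a single chain of equalities. Your added remarks about the tensor-ordering convention and about perturbed tuples remaining in $D$ are sensible bookkeeping that the paper leaves implicit.
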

	\begin{proof}
		Let $U\in U_n$ be unitary such that
			$$B = U^* D U$$
		where $D$ is some real diagonal matrix.
		
		So,
		\begin{align*}
			Df(X \otimes I_n)[A \otimes B] & = 
			(I\otimes U^*) Df(X \otimes I_n)[A \otimes D] (I\otimes U)
			\\ & = 
			(I\otimes U^*) Df(\bigoplus^n_{i=1} X)[\bigoplus^n_{i=1} d_iA] (I\otimes U)
			\\ & = 
			(I\otimes U^*) \bigoplus^n_{i=1}  Df(X)[d_iA] (I\otimes U)
			\\ & = 
			(I\otimes U^*) \bigoplus^n_{i=1} d_iDf(X)[A] (I\otimes U)
			\\ & = 
			(I\otimes U^*) Df(X)[A] \otimes D (I\otimes U)
			\\ & = 
			Df(X)[A]\otimes B.
		\end{align*}	
	\end{proof}
	\begin{proof}[Proof of Lemma \ref{dominationDer}]
		Let $X^0$ be the reduction of $X$ given in the proof of Theorem \ref{domReduction}.
		By Proposition \ref{tenDer}, the derivative at $X^0$ is determined by $L$,
		since there is a $k,$ a unitary $U$ and a matrix tuple $W$
		such that $X_0 \otimes I_k = U\ad X \oplus W U.$
		That is, there is an $L_0$
		such that for any $f$ such that $Df(X) = L,$ $Df(X^0) = L_0.$
		It can be shown that there is a homomorphism $\pi$ taking the algebra generated by the components of $X^0$
		to the algebra generated by the components of $Y$ so that $\pi(X^0_i)= Y_i.$ Furthermore, for
		some unitary $V,$
			$$\pi(\bigoplus_l Z_l) = V^* \bigoplus_l (Z_l \otimes I_{m_i}) V.$$
		Thus, by Proposition \ref{tenDer},
		the derivative at $X^0$ determines the derivative at $Y.$
	\end{proof}

\section{Foundations} \label{foundations}
\subsection{The free coefficient Hardy space}\label{fchardy}
	 Introduced in \cite{har15}, and \cite{har-lit},
	 the classical Hardy space $H^2$ is typically defined as the set of analytic functions
	on the disk so that
		$$\lim_{r\rightarrow 1} \frac{1}{2\pi}\int^{\pi}_{-\pi} |f(re^{i\theta})|\,\dd\theta \leq \infty.$$
	The space $H^2$ is endowed with an inner product given by the formula
		$$\ip{f}{g} = \lim_{r\rightarrow 1} \frac{1}{2\pi}\int^{\pi}_{-\pi} f(re^{i\theta})\overline{g(re^{i\theta})}\,\dd\theta$$
	and is indeed a Hilbert space.
	In an alternative characterization, the functions $z^n$ form an orthonormal basis for $H^2.$ Thus, $H^2$ is
	also the set of functions on the disk such that their coefficients in a power series at the
	origin are sequences in $l^2.$
	
		In the classical Hardy space $H^2,$ there exists a function, the Szeg\"o kernel \cite{sze21b,kra}, $k_\alpha$
	such that if $f \in H^2,$ then
		$$f(\alpha)= \ip{f}{k_\alpha}.$$
	
		We generalize the second interpretation of the Hardy space as the \emph{free coefficient Hardy space}.
	\begin{definition}
		Let $\mathcal{I}$ be the set of monomial indices in the free algebra with $d$ variables.
		The \emph{free coefficient Hardy space} $H^2_d$ is $l^2(\mathcal{I})$ where for
		$f \in H^2_d$ such that $f = (a_I)_{I\in \mathcal{I}}$ the value of $f$ at $X$
		is defined on tuples of matrices $X$ of norm less than $\frac{1}{d}$ by the formula
			$$f(X) = \sum_{I\in \mathcal{I}} a_IX^I.$$
	\end{definition}
	
		Importantly, this space has a Szeg\"o kernel itself.
	\begin{definition}\label{szego}
		Let $\mathcal{I}$ be the set of monomial indices in the free algebra with $d$ variables.
		Let $X \in \M^n.$
		Define the monomial basis vector $m_X = (X^I)_{I\in \mathcal{I}}.$
		The Szeg\"o kernel is given by $k^{ij}_X = (\overline{(m^X_{I})_{ij}})_{I\in \mathcal{I}}.$ That is, it is the sequence
		$(i,j)$-th entries of each monomial $I$ evaluated at $X.$
	\end{definition}
		Note that by a straightforward calculation,
			$$f(X)_{ij} = \ip{f}{k^{ij}_X}.$$
		Thus, the behavior of $f$ at $X$ is determined by the projection of $f$
		onto the space spanned by $k^{ij}_X.$
	\begin{definition}\label{szegospace}
		Define 
			$$\vecspace_X = \operatorname{span}_{ij}\{k^{ij}_X\} \subset H^2_d.$$
		Furthermore, define $P_X: H^2_d \rightarrow \vecspace_X$ to be the projection onto $\vecspace_X.$
	\end{definition}
		Thus, given some pair $(X, Y)$ such that there is a free polynomial with $p(X) = Y$,
		we can correspond to $p$ a unique $f$ of minimum norm in $H^2_d$ such that $f(X) = Y$ by assigning
		$f = P_X p.$
	
		The following is essentially the statement that the values of a function determine the function.
	\begin{proposition}\label{denseSpan}
		$$\overline{\bigcup_{\|X\|\leq \frac{1}{d}} \vecspace_X} = H^2_d.$$ 
	\end{proposition}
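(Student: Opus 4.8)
The plan is to identify the orthogonal complement of $\bigcup_{\norm{X}\le 1/d}\vecspace_X$ in $H^2_d$ and show it is $\{0\}$, which by the projection theorem gives the claimed equality. Since $\vecspace_X=\operatorname{span}_{ij}\{k^{ij}_X\}$ and, by the straightforward computation recorded after Definition \ref{szego}, $\ip{f}{k^{ij}_X}=f(X)_{ij}$, a sequence $f=(a_I)_{I\in\mathcal I}\in H^2_d$ is orthogonal to every $\vecspace_X$ precisely when $f(X)=0$ for all tuples $X$ with $\norm X\le 1/d$, in particular for all $X$ of sufficiently small norm. So the proposition reduces to the assertion that a square-summable free power series which vanishes on a neighbourhood of $0$ in $\M^d$ has all of its coefficients $a_I$ equal to zero.

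To prove that, I would evaluate $f$ on truncated creation operators. Fix $N$, let $H_N=\bigoplus_{k=0}^N(\C^d)^{\otimes k}$ with its standard orthonormal basis $\{e_w\}$ indexed by words $w$ of length at most $N$, and let $L_1,\dots,L_d$ be the truncated left creation operators: $L_ie_w=e_{x_iw}$ when $\abs w<N$ and $L_ie_w=0$ when $\abs w=N$. A short check shows $\norm{L_i}=1$ for each $i$ (and the associated row operator also has norm $1$), so for any $\eps<1/d$ the tuple $X=(\eps L_1,\dots,\eps L_d)$ lies in the domain $\{\norm X<1/d\}$. A second short check shows $X^I=0$ as an operator on $H_N$ whenever $\abs I>N$, so $f(X)=\sum_{\abs I\le N}a_IX^I$ is a genuinely finite sum with no convergence issue, and $f(X)e_e=\sum_{\abs I\le N}a_I\eps^{\abs I}e_I$ where the vectors $e_I$ are pairwise orthogonal. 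Hence $f(X)=0$ forces $a_I=0$ for every word $I$ with $\abs I\le N$; letting $N\to\infty$ gives $f=0$.

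The individual steps here are routine; the point deserving the most care is checking that the chosen evaluation points genuinely lie in the stated domain $\{\norm X<1/d\}$ while remaining rich enough to separate all coefficients simultaneously — this is exactly what the scaling by $\eps<1/d$ together with the use of the full $d$-tuple of truncated creation operators accomplishes, and it is the only place the normalisation $1/d$ in the definition of $H^2_d$ is really invoked. (An alternative would be to argue level by level, using that near each $A_0$ the power series of $f$ agrees with a free function and invoking an identity-principle argument to kill the coefficients, but the creation-operator evaluation above is cleaner and self-contained.)
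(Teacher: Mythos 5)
Your argument is correct and follows the same overall strategy as the paper: both reduce, via the reproducing property $\ip{f}{k^{ij}_X}=f(X)_{ij}$, to the claim that a sequence $f=(a_I)\in H^2_d$ with $f(X)=0$ for every $X$ in the stated ball must be the zero sequence. The paper's proof simply asserts this final implication (``which implies that $f$ is zero'') with no justification, and the step is not vacuous: evaluating only at commuting tuples such as scalars would merely test the commutative symmetrization of the coefficients, since for instance $a_{x_1x_2}X_1X_2+a_{x_2x_1}X_2X_1$ vanishes on commuting $X$ whenever $a_{x_1x_2}=-a_{x_2x_1}$. Your evaluation at the $\eps$-scaled truncated left creation operators on $\bigoplus_{k=0}^N(\C^d)^{\otimes k}$ is precisely the right device: those contractions are genuinely noncommuting, the tuple stays strictly inside $\{\norm{X}<1/d\}$ for $\eps<1/d$, words of length $>N$ are annihilated so convergence is a non-issue, and acting on the vacuum $e_e$ separates the coefficients $a_I$ for all $\abs{I}\le N$ at once because the images $e_I$ are orthonormal. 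In substance, then, your write-up is a complete and self-contained version of the argument that the paper states in abbreviated form, supplying the one nontrivial step the paper omits.
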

	\begin{proof}
		Suppose $\overline{\bigcup_{\|X\|\leq \frac{1}{d}} \vecspace_X} \neq H^2_d.$ Then there exists a nonzero function
		$f \in\overline{\bigcup_{\|X\|\leq \frac{1}{d}} \vecspace_X}^\perp.$
		So, $0 = \ip{f}{k^{ij}_X}$ for all $i, j$ and $X$ which implies that $f$ is zero. This is a contradiction.
	\end{proof}
		In general we will often forget coordinate systems, so we use an alternative characterization of $\vecspace_X.$
	\begin{proposition}\label{altszegspace}
		The space $\vecspace_X$ is the unique vector space such that
		$$\tensor{\vecspace_X}{\mathbb{C}^n} = \mathrm{span}\{\tensor{I}{U} m_Xc| c \in \mathbb{C}^n, U \in U_n\}.$$
	\end{proposition}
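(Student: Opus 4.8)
The plan is to establish two things separately: that $\vecspace_X$ has the stated property, and that it is the unique subspace that does. I would dispose of uniqueness first, since it is purely formal. Suppose $\vecspace$ and $\vecspace'$ are subspaces of $H^2_d$ with $\tensor{\vecspace}{\mathbb{C}^n}=\tensor{\vecspace'}{\mathbb{C}^n}$ inside $\tensor{H^2_d}{\mathbb{C}^n}$. Fix a basis $c_0,\dots,c_{n-1}$ of $\mathbb{C}^n$; every vector of $\tensor{H^2_d}{\mathbb{C}^n}$ has a unique expansion $\sum_j w_j\otimes c_j$ with $w_j\in H^2_d$, and such a vector lies in $\tensor{\vecspace}{\mathbb{C}^n}$ exactly when all $w_j\in\vecspace$ (similarly for $\vecspace'$). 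Given $v\in\vecspace$, the vector $v\otimes c_0$ lies in $\tensor{\vecspace'}{\mathbb{C}^n}$, and its expansion has $w_0=v$, $w_j=0$ for $j\geq 1$, so $v=w_0\in\vecspace'$. By symmetry $\vecspace=\vecspace'$, so at most one subspace can satisfy the identity.

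For existence, write $\W=\operatorname{span}\{\tensor{I}{U}m_Xc : c\in\mathbb{C}^n,\ U\in U_n\}$. Both $\tensor{\vecspace_X}{\mathbb{C}^n}$ and $\W$ are finite-dimensional subspaces of the Hilbert space $\tensor{H^2_d}{\mathbb{C}^n}$: the first because $\vecspace_X=\operatorname{span}_{ij}\{k^{ij}_X\}$ has dimension at most $n^2$, the second because it is spanned by the images of the at most $n$-dimensional family $\{m_Xc : c\in\mathbb{C}^n\}$ under the action of a (finite) spanning set of $\M_n$. Being finite-dimensional they are closed, so it suffices to show they have the same orthogonal complement. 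Since $\vecspace_X$ is closed, $\bigl(\tensor{\vecspace_X}{\mathbb{C}^n}\bigr)^\perp=\tensor{(\vecspace_X^{\perp})}{\mathbb{C}^n}$, and the reproducing identity $f(X)_{ij}=\ip{f}{k^{ij}_X}$ identifies $\vecspace_X^{\perp}$ with $\N_X:=\{f\in H^2_d : f(X)=0\}$. Hence $\bigl(\tensor{\vecspace_X}{\mathbb{C}^n}\bigr)^\perp=\tensor{\N_X}{\mathbb{C}^n}$.

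Next I would compute $\W^\perp$. Because $U_n$ spans $\M_n$ linearly, a vector $h$ is orthogonal to $\tensor{I}{U}m_Xc$ for all $U\in U_n$ and $c\in\mathbb{C}^n$ if and only if it is orthogonal to $\tensor{I}{A}m_Xc$ for all $A\in\M_n$ and $c\in\mathbb{C}^n$. Writing $h=\sum_{k=1}^{n}f^{(k)}\otimes e_k$ with $f^{(k)}\in H^2_d$, and taking $A=e_pe_q\ad$ and $c=e_r$, the pairing $\ip{h}{\tensor{I}{e_pe_q\ad}m_Xe_r}$ collapses, after expanding $m_X=(X^I)_{I}$ and invoking the reproducing identity, to a single entry of the matrix $f^{(p)}(X)$. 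Letting $p,q,r$ range over all indices shows $h\in\W^\perp$ precisely when $f^{(p)}(X)=0$ for every $p$, i.e. $h\in\tensor{\N_X}{\mathbb{C}^n}$. Therefore $\W^\perp=\tensor{\N_X}{\mathbb{C}^n}=\bigl(\tensor{\vecspace_X}{\mathbb{C}^n}\bigr)^\perp$, and taking orthogonal complements gives $\W=\tensor{\vecspace_X}{\mathbb{C}^n}$, as required.

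The step I expect to require the most care is the last one: pushing the unitary-averaging reduction through and keeping straight exactly which tensor slot the complex conjugation in $k^{ij}_X$ (equivalently in the evaluation map $f\mapsto f(X)$) lands on, so that the pairing really does reduce to the vanishing of $f^{(p)}(X)$ rather than of some conjugated or word-reversed variant. Everything else — finite-dimensionality and closedness, the identity $\bigl(\tensor{V}{\mathbb{C}^n}\bigr)^\perp=\tensor{(V^\perp)}{\mathbb{C}^n}$ for closed $V$, the passage to orthogonal complements, and the uniqueness argument — is routine Hilbert-space linear algebra.
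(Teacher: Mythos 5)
Your proof is correct, but it takes a genuinely different route from the paper's. The paper first invokes the Detensoring Lemma (Lemma \ref{stsc}), applied to the $\tensor{I}{U}$-invariance of the right-hand span, to produce a unique subspace $\vecspace$ with $\tensor{\vecspace}{\mathbb{C}^n}$ equal to that span; it then proves the two inclusions $\vecspace_X\subseteq\vecspace$ and $\vecspace\subseteq\vecspace_X$ by direct vector manipulations, exhibiting explicit unitaries (the sign-flip diagonal $U_1$) that carve a kernel component out of $\tensor{I}{U}m_X c$, and reassembling $m_X$ as $\sum_{ij}\tensor{k_X^{ij}}{E_{ij}}$. You instead pass to orthogonal complements: you identify $\vecspace_X^\perp$ with the null space $\N_X=\{f\in H^2_d : f(X)=0\}$ via the reproducing identity, show $\W^\perp=\tensor{\N_X}{\mathbb{C}^n}$ by linearly reducing from $U_n$ to all of $\M_n$ and testing against $A=e_pe_q\ad$, and then conclude by taking complements of closed (finite-dimensional) subspaces; your uniqueness step is a self-contained coordinate-projection argument rather than an appeal to the Detensoring Lemma. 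Both routes are valid. Yours has the practical advantage of being insensitive to the complex conjugate built into the definition of $k_X^{ij}$, since the null space $\N_X$ and the sesquilinear pairing absorb it automatically; the concern you flag in your last paragraph is exactly the spot where the paper's direct calculation is, in fact, off by a conjugate (the right-hand side of its displayed identity $\tensor{k_X^{ij}}{e_i}=\tfrac12\bigl[m_Xe_j-\tensor{I}{U_1}m_Xe_j\bigr]$ works out to $\tensor{\bigl((X^I)_{ij}\bigr)_I}{e_i}$, missing the conjugate carried by $k_X^{ij}$), a bookkeeping issue your route never has to confront. What the paper's approach buys in exchange is constructiveness and reuse of the nearby Detensoring Lemma machinery.
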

	To prove this we need a decomposition theorem, a \emph{detensoring lemma} that will allow us to show that spaces like the one
	in the above lemma are well-defined.
	\begin{lemma}[Detensoring lemma]\label{stsc}
		For a Hilbert space $\hilbert$, let $\vecspace$ be a subspace of $\tensor{\hilbert}{\mathbb{C}^n}$ such that if $U \in \mathcal U_n$ then
		$$\tensor{I}{U} \vecspace = \vecspace.$$ Then there exists $\vecspace'$, a subspace of $\hilbert$, so that $\vecspace = \tensor{\vecspace'}{\mathbb{C}^n}.$
	\end{lemma}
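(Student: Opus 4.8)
The plan is to deduce the detensoring lemma from the irreducibility of the defining action $U \mapsto U$ of $U_n$ on $\C^n$; concretely, I will use only that the diagonal unitaries together with a few change-of-basis unitaries already suffice to ``see'' the tensor structure. Fix an orthonormal basis $e_1, \dots, e_n$ of $\C^n$, so that every $v \in \hilbert \otimes \C^n$ has a unique expansion $v = \sum_{i=1}^n h_i \otimes e_i$ with $h_i \in \hilbert$.

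First I record a homogeneity fact: if $h \otimes c \in \vecspace$ for some nonzero $c \in \C^n$, then $h \otimes c' \in \vecspace$ for every $c' \in \C^n$. Indeed, when $c' \neq 0$ choose $U \in U_n$ with $U(c/\norm{c}) = c'/\norm{c'}$; then $(I \otimes U)(h \otimes c) = \norm{c}\, h \otimes (c'/\norm{c'}) \in \vecspace$ by $U_n$-invariance of $\vecspace$, and rescaling (using that $\vecspace$ is a linear subspace) gives $h \otimes c' \in \vecspace$. Consequently the set
$$\vecspace' := \{h \in \hilbert : h \otimes c \in \vecspace \text{ for all } c \in \C^n\} = \{h \in \hilbert : h \otimes e_1 \in \vecspace\}$$
is independent of the chosen basis, and the second description exhibits it as the preimage of $\vecspace$ under the isometric linear map $h \mapsto h \otimes e_1$, so $\vecspace'$ is a linear subspace of $\hilbert$, closed whenever $\vecspace$ is.

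It then remains to verify $\vecspace = \vecspace' \otimes \C^n$. The inclusion $\vecspace' \otimes \C^n \subseteq \vecspace$ is immediate from the definition of $\vecspace'$ and linearity, and since $\C^n$ is finite dimensional the algebraic tensor product $\vecspace' \otimes \C^n$ is already closed (when $\vecspace'$ is), so no completion is needed. For the reverse inclusion, let $v = \sum_{i=1}^n h_i \otimes e_i \in \vecspace$; for each index $k$ and each $t \in \R$ the diagonal unitary $U_{k,t} = \diag(1, \dots, e^{it}, \dots, 1)$, with $e^{it}$ in the $k$-th slot, satisfies $(I \otimes U_{k,t}) v \in \vecspace$, whence $(I \otimes U_{k,t}) v - v = (e^{it} - 1)\, h_k \otimes e_k \in \vecspace$; taking any $t$ with $e^{it} \neq 1$ yields $h_k \otimes e_k \in \vecspace$, i.e.\ $h_k \in \vecspace'$. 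Hence $v \in \vecspace' \otimes \C^n$, and the two inclusions give $\vecspace = \vecspace' \otimes \C^n$.

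I do not anticipate a genuine obstacle: the only subtleties are the basis-independence and linear-subspace structure of $\vecspace'$ (dispatched by the homogeneity step) and the observation that $\vecspace' \otimes \C^n$ is automatically closed because $\dim \C^n < \infty$. As a more structural alternative, one may note that the orthogonal projection $P$ onto $\vecspace$ commutes with every $I \otimes U$ (invariance under both $U$ and $U^*$ makes $\vecspace$ reducing), hence with the von Neumann algebra $I \otimes \M_n$ generated by $\{I \otimes U : U \in U_n\}$, hence lies in its commutant $\BH \otimes I$; thus $P = Q \otimes I$ for a projection $Q$ on $\hilbert$, and one takes $\vecspace'$ to be the range of $Q$. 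I would present the elementary argument as the proof and, if useful, record the commutant viewpoint as a remark.
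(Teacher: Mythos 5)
Your proof is correct and follows essentially the same strategy as the paper's: fix an orthonormal basis of $\C^n$, define $\vecspace' = \{h : h \otimes e_1 \in \vecspace\}$, and use diagonal unitaries to isolate the components $h_k \otimes e_k$ of a vector in $\vecspace$. The only cosmetic differences are that you use a phase unitary and subtraction to isolate $h_k \otimes e_k$ where the paper uses a sign-flip unitary and averaging, and your preliminary ``homogeneity'' observation absorbs the permutation step the paper performs separately with the unitaries $W_i$.
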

	\begin{proof}
		Let $e_1, \ldots, e_n$ be an orthonormal basis for $\mathbb{C}^n.$
		
		Let $\vecspace'$ be the subspace $\{v | \tensor{v}{e_1} \in \vecspace\}.$
		
		We first show $\vecspace \subseteq \vecspace'\otimes \mathbb{C}^n.$
		Let $v \in \vecspace.$
		Then $$v = \sum_i \tensor{v_i}{e_i}.$$
		Let $U_i$ be the unitary that fixes $e_i$ and sends $e_j$ to $-e_j$ if $i \neq j.$
		So, $$\tensor{I}{U_i}v  = \sum_i (-1)^{\chi(i\neq j)} \tensor{v_i}{e_i} \in \vecspace$$ by assumption. Therefore
		$\frac{1}{2}\left(\tensor{I}{U_i}v + v\right) = \tensor{v_i}{e_i} \in \vecspace.$ Let $W_i$ be a unitary taking $e_i$ to $e_1.$
		So $$\tensor{I}{W_i}\tensor{v_i}{e_i} = \tensor{v_i}{e_1}.$$
		Thus, each $v_i \in \vecspace'$ and so $v \in \tensor{\vecspace'}{\mathbb{C}^n}$.
		
		We now show $\vecspace \supseteq \tensor{\vecspace'}{\mathbb{C}^n}.$
		Suppose $v \in \tensor{\vecspace'}{ \mathbb{C}^n},$
		so that, $$v = \sum_i \tensor{v_i}{e_i}$$ where each $v_i \in \vecspace'.$ Let $W_i$ be a unitary taking $e_1$ to $e_i.$
		Thus, $$v = \sum_i \tensor{I}{W_i} \tensor{v_i}{e_1} \in \vecspace$$ since each $\tensor{I}{W_i}
		\tensor{v_i}{e_1} \in \vecspace$ by definition.
	\end{proof}
	
	We now prove Proposition \ref{altszegspace}.
	\begin{proof}[Proof of Proposition \ref{altszegspace}]
		By the detensoring lemma, there is a unique vector space $\vecspace$ such that
			$$\tensor{\vecspace}{\mathbb{C}^n} = \mathrm{span}\{\tensor{I}{U} m_Xc| c \in \mathbb{C}^n, U \in U_n\}.$$
		
		Let $U_1$ be the unitary sending $e_i$ to $-e_i$. 
		Note, 
		$$\tensor{k_X^{ij}}{e_i} = \frac{1}{2}\left[\tensor{I}{I}m_Xe_j - \tensor{I}{U_1}m_Xe_j\right].$$  Thus, $\vecspace$ contains $\vecspace_X$ as the $k_X^{ij}$ form a basis for $\vecspace_X.$
		
		Let $E_{ij}$ designate the matrix in $\M_n$ with $1$ in the $ij$th position and $0$ elsewhere. Note, $m_X = \sum \tensor{k_X^{ij}}{E_{ij}}.$
		So, since the unitary matrices in $\M_n$ span $\M_n$ itself, it can be derived that
			$$\mathrm{span}\{\tensor{I}{U} m_X W| U, W \in U_n\}= \mathrm{span}\{ \tensor{k_X^{ij}}{A} | A \in \M_n \}.$$
		Now, if $f \in \vecspace,$ there is some $l\in \mathbb{C}^n$ such that
			$$\tensor{f}{l} = \sum_{ij} \tensor{k_X^{ij}}{A}c_{ij}.$$
		Note that $Ac_{ij} = l,$ since $k_X^{ij}.$ So
			$\tensor{f}{l} = \sum_{ij} \tensor{k_X^{ij}}{l}.$
		So $\tensor{\vecspace}{\mathbb{C}^n} \subseteq \tensor{\vecspace_X}{\mathbb{C}^n}$ and thus $\vecspace_X$ contains $\vecspace$.
	\end{proof}

	\subsubsection{The coefficient Hardy space of a general free vector-valued function}
		The free coefficient Hardy space is useful for interpolation problems to obtain existence
		and uniqueness results. If we relax the choice of basis of functions, we still obtain
		uniqueness results. For our constructions, this is often enough.
			
		\begin{definition}
		Let $\ph$ be a free function on a domain $D \subset \M^d$ and taking values in $\tensor{\hilbert}{\M}$.
		The \emph{free coefficient Hardy space for $\ph$}, denoted by $H^2_\ph$ is given by 
		$$H^2_\ph  = \{u\in \hilbert|\forall X\in D, \tensor{u^*}{I}\ph(X) = 0\}^\perp.$$
		The value of $f\in H^2_\ph$ at $X\in D$ is defined to be:
			$$f(X) = \tensor{f^*}{I}\ph(X).$$
		\end{definition}
	
		We define the vector spaces from Definition \ref{szegospace} in the second abstract characterization which is easier to
		state in this context. However, a choice of basis will yield Szeg\"o kernels as in the original definition.
		\begin{definition} \label{localvec}
		Let $\ph$ be a free function on a domain $D \subset \M^d$ and taking values in $\tensor{\hilbert}{\M}$, and let $X \in \M^d_n.$
		The space $\vecspace^\ph_X \subset H^2_{\ph}$ is the unique vector space such that
		$$\tensor{\vecspace^\ph_X}{\mathbb{C}^n} = \mathrm{span}\{\tensor{I}{U} \ph(X)c| c \in \mathbb{C}^n, U \in U_n\}.$$
		
		Define the projection $P^\ph_X: H^2_\ph \rightarrow \vecspace^\ph_X$ to be the projection onto $\vecspace^\ph_X.$
		\end{definition}
		
		We note again that for $f,g \in H^2_\ph,$ $f(X) = g(X)$ if and only if $P^\ph_Xf=P^\ph_Xg.$
	Furthermore, the spaces $\vecspace^{\ph}_X$ exhaust $H^2_\ph.$
	\begin{proposition}\label{denseSpanGen}
		$$\overline{\bigcup_{X\in D} \vecspace^{\ph}_X} = H^2_\ph.$$ 
	\end{proposition}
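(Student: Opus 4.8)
The plan is to mirror the proof of Proposition \ref{denseSpan} almost verbatim, replacing the Szeg\"o kernels $k^{ij}_X$ of the concrete free coefficient Hardy space by the defining property of the general space $H^2_\ph$. Recall that $H^2_\ph$ is by definition the orthogonal complement in $\hilbert$ of the set $N := \{u\in\hilbert \mid \forall X\in D,\ \tensor{u^*}{I}\ph(X)=0\}$, and that evaluation is $f(X) = \tensor{f^*}{I}\ph(X)$. First I would argue by contradiction: suppose $\overline{\bigcup_{X\in D}\vecspace^\ph_X} \neq H^2_\ph$. Then there is a nonzero $f \in H^2_\ph$ orthogonal to every $\vecspace^\ph_X$, $X\in D$.

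Next I would unwind what orthogonality to $\vecspace^\ph_X$ means. By Definition \ref{localvec}, $\tensor{\vecspace^\ph_X}{\mathbb C^n} = \mathrm{span}\{\tensor{I}{U}\ph(X)c \mid c\in\mathbb C^n,\ U\in U_n\}$, so $f \perp \vecspace^\ph_X$ forces $\tensor{f^*}{I}\,\tensor{I}{U}\ph(X)c = 0$ for all $c\in\mathbb C^n$ and all unitaries $U$; taking $U=I$ in particular gives $\tensor{f^*}{I}\ph(X) = 0$ as a matrix. (Equivalently, one may cite the remark that $P^\ph_X f = 0$ iff $f(X)=0$.) Since this holds for every $X\in D$, we conclude $f \in N$. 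But $f\in H^2_\ph = N^\perp$ and $f\neq 0$, so $f\in N\cap N^\perp = \{0\}$, a contradiction. Hence $\overline{\bigcup_{X\in D}\vecspace^\ph_X} = H^2_\ph$.

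The only point requiring a little care — and the closest thing to an obstacle — is the passage from "$f$ is orthogonal to the vector space $\vecspace^\ph_X$ living inside the abstract $H^2_\ph$" to "$\tensor{f^*}{I}\ph(X)=0$ as a matrix identity", i.e. relating the abstract inner product on $H^2_\ph$ to the detensored description of $\vecspace^\ph_X$ supplied by the detensoring Lemma \ref{stsc}. Concretely, if $f\perp\vecspace^\ph_X$ then $\tensor{f}{c}\perp\tensor{\vecspace^\ph_X}{\mathbb C^n}$ for every $c$, so $\langle \tensor{f}{c},\ \ph(X)c'\rangle = 0$ for all $c,c'\in\mathbb C^n$, and the entries of $\tensor{f^*}{I}\ph(X)$ are exactly such pairings; thus the matrix vanishes. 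Everything else is a formal repetition of the Hilbert-space argument used for Proposition \ref{denseSpan}, so the proof is short.
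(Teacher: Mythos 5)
Your proof is correct and follows essentially the same route as the paper's: assume a nonzero $f\in H^2_\ph$ orthogonal to every $\vecspace^\ph_X$, show this forces $f(X)=\tensor{f^*}{I}\ph(X)=0$ for all $X\in D$, and then conclude $f$ lies in both $N$ and $N^\perp=H^2_\ph$, hence $f=0$. The paper compresses the middle step by invoking the remark that $P^\ph_X f = P^\ph_X g$ iff $f(X)=g(X)$ (so $P^\ph_X f = 0$ gives $f(X)\equiv 0$), whereas you unwind the tensored span directly; both are fine, and your extra care about passing from abstract orthogonality in $H^2_\ph$ to the matrix identity $\tensor{f^*}{I}\ph(X)=0$ is a reasonable thing to spell out.
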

	\begin{proof}
		Suppose $f \in (\bigcup_{X\in D} \vecspace^{\ph}_X)^{\perp}.$
		So, for every $X,$ $P_X f = 0 = P_X 0.$ Thus, $f(X) \equiv 0.$
	\end{proof}

\subsection{Models}

	As in the commutative case, model formulas are a powerful tool for investigating
	various classes of holomorphic functions on different domains. For example,
	for scalar valued functions in the Schur class in two variables, we have the following theorem:
	\begin{theorem}[Agler \cite{ag90}] \label{classicalschurmodel}
		Let $U \subseteq \D^2$.
		Let $\ph:U \to \D$.
		There is an analytic continuation of $\ph$ to $\D^2,$ $\ph:\D^2 \to \D$ if and only if
		there exists a separable Hilbert space $\hilbert,$
		an orthogonal decomposition $\hilbert = \hilbert_1 \oplus \hilbert_2$, and
		a holomorphic function $u:U \to \hilbert$ so that 
		\[
		1 - \cc{\ph(\mu)}\ph(\la) = \ip{(1 - \mu\ad\la)u_\la}{u_\mu},
		\]
		where $\la$ is viewed as the operator $\la = \la_1 P_{\hilbert_1} + \la_2 P_{\hilbert_2}$.
	\end{theorem}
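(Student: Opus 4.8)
\emph{Proof plan.} I will prove the two implications separately. The forward implication (a function carrying a model extends) is the ``easy'' direction and is an instance of the \emph{lurking isometry} argument developed in Section~\ref{foundations}: rearrange the model identity into an isometry, dilate it to a unitary colligation, and read off a transfer-function formula that is visibly holomorphic on all of $\D^2$. The reverse implication (a Schur-class function on $\D^2$ admits a model) is the substantive one and rests on an \emph{Agler decomposition} of the kernel $1-\cc{\ph(\mu)}\ph(\la)$, obtained by a Hahn--Banach separation argument powered by And\^o's inequality.

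\emph{($\Leftarrow$).} Suppose the model identity holds on $U$, and write $\la u_\la$ for $(\la_1 P_{\hilbert_1}+\la_2 P_{\hilbert_2})u_\la\in\hilbert$. Expanding $\ip{(1-\mu\ad\la)u_\la}{u_\mu}$ and rearranging gives, for all $\la,\mu\in U$,
\[
\ip{\la u_\la}{\mu u_\mu}+1\;=\;\ip{u_\la}{u_\mu}+\cc{\ph(\mu)}\ph(\la),
\]
so $\la u_\la\oplus 1\mapsto u_\la\oplus\ph(\la)$ extends by linearity and continuity to an isometry $V$ between the corresponding closed subspaces of $\hilbert\oplus\C$. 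After enlarging $\hilbert$ if necessary (adjoining the new summand to $\hilbert_1$, so the splitting $\hilbert=\hilbert_1\oplus\hilbert_2$ survives), extend $V$ to a unitary $\bpm A & B\\ C & D\epm$ on $\hilbert\oplus\C$. Since $\la=\la_1P_{\hilbert_1}+\la_2P_{\hilbert_2}$ has norm $\max_i|\la_i|<1$ and $A$ is a contraction, $1-A\la$ is invertible; the colligation equations $A(\la u_\la)+B=u_\la$ and $C(\la u_\la)+D=\ph(\la)$ then force $u_\la=(1-A\la)\inv B$ and
\[
\ph(\la)=D+C\la(1-A\la)\inv B .
\]
The right-hand side is defined and holomorphic for every $\la\in\D^2$, agrees with $\ph$ on $U$, and --- reversing the rearrangement above, using that the colligation is unitary and that now $u_\la:=(1-A\la)\inv B$ is defined on all of $\D^2$ --- satisfies the model identity there; setting $\mu=\la$ gives $1-|\ph(\la)|^2=\ip{(1-\la\ad\la)u_\la}{u_\la}\ge 0$, so the extension maps $\D^2$ into $\cc{\D}$, hence into $\D$ by the maximum modulus principle (or it is a constant, which lies in $\D$ because it agrees with $\ph|_U$).

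\emph{($\Rightarrow$).} Suppose $\ph:\D^2\to\D$ is holomorphic; then $\ph$ lies in the Schur class. The crux is to produce positive semidefinite kernels $\Gamma_1,\Gamma_2$ on $\D^2$ with
\[
1-\cc{\ph(\mu)}\ph(\la)\;=\;(1-\cc{\mu_1}\la_1)\,\Gamma_1(\la,\mu)+(1-\cc{\mu_2}\la_2)\,\Gamma_2(\la,\mu).
\]
I would first establish this on every finite $F\subset\D^2$: if the Hermitian matrix $[\,1-\cc{\ph(w)}\ph(z)\,]_{z,w\in F}$ did \emph{not} lie in the cone spanned by $\{P\circ[1-\cc{w_i}z_i]:P\succeq 0,\ i=1,2\}$, then Hahn--Banach separation plus self-duality of the positive cone would furnish a Hermitian $M$ with $M\circ[1-\cc{w_1}z_1]\succeq 0$, $M\circ[1-\cc{w_2}z_2]\succeq 0$, yet $M\circ[1-\cc{\ph(w)}\ph(z)]\not\succeq 0$. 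Using the Szeg\"o-kernel identity $N\circ[1-\cc{w_i}z_i]=N-\Lambda_i N\Lambda_i\ad$ with $\Lambda_i=\diag(z_i)$, the first two conditions say $M\succeq 0$ and $M\succeq\Lambda_i M\Lambda_i\ad$; compressing the commuting diagonal contractions $\Lambda_1,\Lambda_2$ to the range of $M^{1/2}$ gives commuting contractions $T_1,T_2$, And\^o's inequality gives $\|\ph(T_1,T_2)\|\le 1$, and translating back (the same identity with $\Lambda_i$ replaced by $\diag(\ph(z))$) yields $M\circ[1-\cc{\ph(w)}\ph(z)]\succeq 0$, a contradiction. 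Having the decomposition on arbitrary finite sets, a normal-families argument --- the admissible $\Gamma_i$ satisfy $(1-|z_i|^2)\Gamma_i(z,z)\le 1-|\ph(z)|^2\le 1$, hence are bounded on compacta --- produces $\Gamma_1,\Gamma_2$ on all of $\D^2$. Finally, factor $\Gamma_i(\la,\mu)=\ip{g_i(\la)}{g_i(\mu)}_{\hilbert_i}$ (Kolmogorov decomposition), put $\hilbert=\hilbert_1\oplus\hilbert_2$ and $u_\la=g_1(\la)\oplus g_2(\la)$; the decomposition becomes the model identity, and restricting to $U$ finishes. Holomorphy of $u$ is then automatic: feeding this (a priori non-holomorphic) model into the $(\Leftarrow)$ construction replaces $u_\la$ by $(1-A\la)\inv B$ for fixed operators $A,B$.

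\emph{Main obstacle.} All the work is in the reverse implication, and the decisive ingredient is And\^o's inequality: it is precisely the coincidence of the Schur and Schur--Agler classes on $\D^2$ that closes the cone-separation argument, and this fails for three or more variables. The remaining delicate point --- passing from finite subsets to all of $\D^2$ while keeping the two positive summands under control --- is routine once the finite-dimensional statement is in hand, since the admissible kernels form a pointwise-bounded, hence precompact, family.
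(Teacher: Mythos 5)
The paper does not prove this theorem: it is stated as background in Section~\ref{foundations} and cited directly from Agler's 1990 paper~\cite{ag90}, serving only as a motivating precursor for the free model theorems (Theorems~\ref{ncschur3} and~\ref{ncschur2}) that the paper actually uses. So there is no in-paper argument against which to compare yours. Judged on its own, your sketch is the standard modern route to Agler's bidisk theorem and is essentially correct: in the $(\Leftarrow)$ direction, the identity $\ip{\la u_\la}{\mu u_\mu}+1=\ip{u_\la}{u_\mu}+\cc{\ph(\mu)}\ph(\la)$ is the right rearrangement, the isometry-to-unitary-colligation step is the lurking isometry argument (and you handle the cosmetic issue of preserving the splitting $\hilbert=\hilbert_1\oplus\hilbert_2$ when enlarging), and the resulting transfer function $D+C\la(1-A\la)\inv B$ is visibly holomorphic on $\D^2$ with $1-|\ph(\la)|^2=\ip{(1-\la\ad\la)u_\la}{u_\la}\ge 0$; your appeal to the maximum principle or constancy to pass from $\cc\D$ to $\D$ is the right fix. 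In the $(\Rightarrow)$ direction, the cone-separation argument with And\^o's inequality (separating functional $M$, the telescoping $M\succeq\Lambda_i M\Lambda_i\ad\succeq\cdots\to0$ forcing $M\succeq0$, compression to $\operatorname{ran}M^{1/2}$, And\^o, contradiction) is exactly the standard proof of the Agler decomposition on the bidisk, and your normal-families passage from finite sets to all of $\D^2$ and subsequent Kolmogorov factorization are correct. Your observation that holomorphy of $u$ can be recovered afterward by feeding the possibly-nonholomorphic model back through the $(\Leftarrow)$ construction is a clean way to dispose of that technical point. Worth noting for historical accuracy: Agler's original 1990 argument was a more operator-theoretic realization proof rather than the Hahn--Banach cone-separation argument you use, which entered the literature in later expositions; both are legitimate, and the cone-separation version is the one that best parallels the Positivstellensatz philosophy the present paper invokes elsewhere. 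The correct emphasis is also yours: And\^o's theorem is the load-bearing ingredient, and it is exactly its failure in three or more variables that confines this unconditional statement to the bidisk, which is why the paper's own multivariable results work with the Schur--Agler/L\"owner class rather than the full Pick class.
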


 To use models for analytic continuation techniques in the free setting we use the following interpolation theorem.
 	Let $\B^d$ be the free set of $d$-tuples of contractions
		\beq\label{dcontractions}\B^d = \{X = (X_1, \ldots, X_d) \in \M^d | \norm{X_i} < 1 \text{ for } i = 1, \ldots, d\}.\eeq
	\begin{theorem}[Agler, {\mc}Carthy \cite{agmc_gh}] \label{ncschur3}
		Let $D \subset \B^d$ be a free set.
		Let $\ph: D \rightarrow \B$ be a free function.
		There is an extension of $\ph$ to $\B^d$ as a free function $\ph:\B^d \to \B$
		if and only if there are
		functions $u_1, \ldots, u_d: D \rightarrow \tensor{\B(\mathbb{C},\hilbert)}{\M}$  so that for any $X,Y \in D$, 
		\beq \label{ncschurmodel}
			I - \ph(Y)\ad\ph(X) = \sum_i u_i(Y)\ad \tensor{I}{I - Y_i\ad X_i} u_i(X).
		\eeq
	\end{theorem}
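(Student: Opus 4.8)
The plan is to prove the two implications separately, with the model identity \eqref{ncschurmodel} playing the role that Agler's decomposition plays on the bidisc (Theorem \ref{classicalschurmodel}). The engine for the direction ``model $\Rightarrow$ extension'' is a lurking isometry together with the transfer-function realization it produces; the engine for ``extension $\Rightarrow$ model'' is a Hahn--Banach separation combined with an evaluation of $\ph$ on a tuple of operators supplied by a GNS construction.

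\emph{From a model to an extension.} Suppose $u_1, \dots, u_d \colon D \to \tensor{\B(\C,\hilbert)}{\M}$ satisfy \eqref{ncschurmodel}. Put $\mathcal E = \bigoplus_{i=1}^d \hilbert$, $u(X) = \bigoplus_i u_i(X)$, and $E_X = \bigoplus_i \tensor{I}{X_i}$, which is a strict contraction whenever $X \in \B^d$. Expanding the middle factor of \eqref{ncschurmodel} and moving the $Y_i\ad X_i$ terms across gives
\beq
I + \bigl(E_Y u(Y)\bigr)\ad E_X u(X) \;=\; \ph(Y)\ad \ph(X) + u(Y)\ad u(X),
\eeq
which is exactly the statement that the assignment $\bigl(c, E_X u(X)c\bigr) \mapsto \bigl(\ph(X)c, u(X)c\bigr)$ preserves inner products. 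By the lurking isometry argument of Section \ref{foundations}, with the detensoring Lemma \ref{stsc} used to transfer everything onto the fixed space $\C \oplus \mathcal E$, there is a contraction $V = \bbm A & B \\ C & D \ebm$ on $\C \oplus \mathcal E$ implementing this assignment on the closed span of the vectors $\bigl(c, E_X u(X) c\bigr)$. Reading off the two block equations from $V\bigl(c, E_X u(X)c\bigr) = \bigl(\ph(X)c, u(X)c\bigr)$ and eliminating $u(X)$ --- permissible since $\norm{E_X} < 1$ on $\B^d$ and $\norm{D} \le 1$ --- produces the transfer function
\beq
\ph(X) = A + B \, E_X \, (I - D\, E_X)\inv C .
\eeq
The right-hand side is defined for every $X \in \B^d$; a routine ampliation computation shows it respects direct sums and similarity, so it is a free function; and contractivity of $V$ forces $\norm{\ph(X)} \le 1$. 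This is the desired extension, and it agrees with the given $\ph$ on $D$ by construction.

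\emph{From an extension to a model.} Now suppose $\ph$ extends to a free function $\B^d \to \B$; it suffices to produce the $u_i$ on all of $\B^d$ and restrict. As a bounded free function $\ph$ is analytic \cite{vvw12}, its Taylor series $\ph(X) = \sum_{I} c_I X^I$ converges uniformly on $\{\, \norm{X_i} \le r\, \}$ for each $r < 1$, and hence $\ph$ may be evaluated at any $d$-tuple $T$ of strict contractions on a Hilbert space with $\norm{\ph(T)} \le 1$. Let $\K$ be the convex cone of hermitian $\M$-valued nc kernels on $\B^d$ generated by the kernels $(Y,X) \mapsto \sum_i G_i(Y)\ad \tensor{I}{I - Y_i\ad X_i} G_i(X)$. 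If the kernel $I - \ph(Y)\ad \ph(X)$ were not in the weak-$*$ closure of $\K$, a Hahn--Banach separation would supply a hermitian functional, nonnegative on $\K$ but negative on $I - \ph\ad\ph$; feeding this functional into a Kolmogorov/GNS construction --- its nonnegativity on $\K$ encoding, coordinate by coordinate, the positivity of an $(I - Y_i\ad X_i)$-twisted kernel --- yields a Hilbert space, a $d$-tuple $T$ of strict contractions (strictness from the finitely many points involved together with a compactness argument), and a vector $\xi$ with $\ip{(I - \ph(T)\ad\ph(T))\xi}{\xi} < 0$, i.e.\ $\norm{\ph(T)} > 1$, contradicting the previous sentence. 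Hence $I - \ph\ad\ph$ lies in $\overline{\K}$, which unwinds to an identity $I - \ph(Y)\ad\ph(X) = \sum_i u_i(Y)\ad \tensor{I}{I - Y_i\ad X_i} u_i(X)$ (a Kolmogorov factorization absorbing the closure and placing the coefficients on the fixed separable $\hilbert$), which is \eqref{ncschurmodel}.

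\emph{Where the difficulty lies.} The substantive half is ``extension $\Rightarrow$ model'', and within it there are two delicate points. First, one must see that the separating functional decomposes coordinate by coordinate in a manner matched to the single test kernel $\tensor{I}{I - Y_i\ad X_i}$ in each variable: in the \emph{commutative} polydisc this is exactly where the Schur and Schur--Agler classes part company for $d \ge 3$, so the argument must genuinely exploit that free functions are assembled from words in the coordinates --- this is where the free coefficient Hardy space of Section \ref{fchardy} and the detensoring Lemma \ref{stsc} carry the bookkeeping. Second, one needs the analytic input that the $d$-tuple $T$ coming out of GNS is a bona fide strict contraction at which the power series of $\ph$ converges, with $\norm{\ph(T)} \le 1$. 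By contrast, the lurking-isometry half is essentially formal once the detensoring lemma is available.
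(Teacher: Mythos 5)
The paper does not actually prove this theorem: it is stated as a citation to Agler and {\mc}Carthy \cite{agmc_gh} and used as a black box (together with its M\"obius transform, Theorem \ref{ncschur2}). So there is no in-paper proof to compare your argument against; I can only assess the argument on its own.

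Your ``model $\Rightarrow$ extension'' half is sound and essentially complete. The rearrangement of \eqref{ncschurmodel} into a Gram identity, the application of a lurking-isometry argument (with Lemma \ref{stsc} used to strip off the ampliation so the colligation $V$ lives on the fixed space $\C \oplus \bigoplus_i\hilbert$), and the elimination leading to $\ph(X) = A + B\,E_X(I - D\,E_X)\inv C$ are all correct. Since $\norm{E_X} = \max_i\norm{X_i} < 1$ on $\B^d$ and $V$ is a contraction, the transfer function is a well-defined free function with $\norm{\ph(X)}\leq 1$, and it agrees with $\ph$ on $D$ by construction.

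The ``extension $\Rightarrow$ model'' half is, as you say, where the substance lies, and there your sketch leaves the decisive point unargued. Your cone is precisely the one that, in the \emph{commutative} polydisc for $d\geq 3$, fails to exhaust the hermitian kernels dominated by $1$ --- that failure is exactly the gap between the Schur and Schur--Agler classes, and the Hahn--Banach/GNS machinery cannot by itself notice the difference between the commutative and free situations. The theorem asserts that in the free setting the cone \emph{is} dense enough; a proof must say why, not merely gesture at the free coefficient Hardy space ``carrying the bookkeeping.'' Concretely, you still owe: (i) a reduction to finitely many matrix points --- typically by exploiting that the value of a free function at a direct sum of points determines it at each summand --- so the separation takes place in finite dimensions; (ii) an argument that the GNS tuple $T$ is a genuine strict row contraction (your parenthetical ``strictness from the finitely many points involved together with a compactness argument'' is a placeholder, not a proof); and (iii) a von Neumann--type inequality showing $\norm{\ph(T)}\leq 1$ for such $T$, which for a general bounded free function requires the convergence theory of its power series, not just the polynomial case. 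None of these are trivial, and taken together they \emph{are} the theorem. You have correctly located the difficulty, but as written this half is an outline with the hard steps deferred rather than a proof.
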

	
	Transformed to the upper half plane via a M\"obius transform, the model theorem is as follows.
	\begin{theorem} \label{ncschur2}
		Let $D \subset \Pi^d$ be a free set.
		Let $\ph: D \rightarrow \Pi$ be a free function.
		There is an extension of $\ph$ to $\Pi^d$ as a free function $\ph:\Pi^d \to \Pi$
		if and only if there are
		functions $u_1, \ldots, u_d: D \rightarrow \tensor{\B(\mathbb{C},\hilbert)}{\M}$  so that for any $X,Y \in D$, 
		\beq \label{ncpickmodel}
			 \ph(X) - \ph(Y)\ad = \sum_i u_i(Y)\ad \tensor{I}{X_i - Y_i\ad } u_i(X).
		\eeq
	\end{theorem}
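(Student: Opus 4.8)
The plan is to transfer Theorem \ref{ncschur3} from the polyball $\B^d$ to the polyhalfplane $\Pi^d$ via a (componentwise) Cayley transform, and to track what the model equation \eqref{ncschurmodel} becomes under this change of variables. Recall that the map $\beta(z) = \ii\frac{1+z}{1-z}$ sends $\D$ biholomorphically to $\cc{\Pi^1}$, with inverse $\alpha(w) = \frac{w - \ii}{w + \ii}$; applied to a single operator whose numerical range lies in the relevant region (equivalently, on a free set), these expressions make sense via the free functional calculus since $1-X_i$ and $X_i+\ii$ are invertible. First I would define, for a free set $D \subset \Pi^d$ and a free function $\ph: D \to \Pi$, the conjugated data $\widetilde D = \{(\alpha(X_1),\dots,\alpha(X_d)) : X \in D\} \subset \B^d$ (still a free set, since the componentwise Cayley transform is a free map and respects direct sums and unitary conjugation) and $\widetilde\ph = \alpha \circ \ph \circ \beta$ on $\widetilde D$, which is a free function into $\B$. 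An extension of $\ph$ to all of $\Pi^d$ as a free map into $\Pi$ exists if and only if an extension of $\widetilde\ph$ to all of $\B^d$ into $\B$ exists, because $\beta$ carries $\B^d$ onto $\Pi^d$ bijectively and freely; so Theorem \ref{ncschur3} applies to $\widetilde\ph$ and yields functions $\widetilde u_i : \widetilde D \to \tensor{\B(\C,\hilbert)}{\M}$ satisfying \eqref{ncschurmodel}.

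Next I would carry out the algebraic translation of the kernel identity. The key elementary computation is the standard "resolvent of a Cayley transform" identity: if $w, w'$ are operators with positive imaginary part and $x = \alpha(w)$, $x' = \alpha(w')$, then
\[
I - (x')\ad x = \tensor{I}{2\ii (w'+\ii)\adinv} \tensor{I}{(w - (w')\ad)} \tensor{I}{-2\ii(w+\ii)\inv},
\]
wait---more precisely, $I - (x')\ad x = 2\,(w'+\ii)\adinv\,(w - (w')\ad)\,(w+\ii)\inv$ up to the correct constant, and similarly $1 - \cc{\widetilde\ph(Y)}\widetilde\ph(X)$ becomes a positive multiple of $\ph(X) - \ph(Y)\ad$ sandwiched between the appropriate invertible factors coming from $\beta$. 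Substituting these two identities into \eqref{ncschurmodel}, cancelling the common invertible "outer" factors on the $\ph$-side, and absorbing the factors $(\alpha(X_i)+\ii)\inv$-type terms into new functions
\[
u_i(X) := \widetilde u_i(\alpha(X_1),\dots,\alpha(X_d)) \,\cdot\, (\text{appropriate scalar and } (X_i+\ii)\inv \text{ factor}),
\]
transforms \eqref{ncschurmodel} into exactly \eqref{ncpickmodel}. The converse direction runs the same substitution backwards: given functions $u_i$ on $D \subset \Pi^d$ satisfying \eqref{ncpickmodel}, reverse the Cayley substitution to produce $\widetilde u_i$ satisfying \eqref{ncschurmodel} on $\widetilde D$, invoke Theorem \ref{ncschur3} to extend $\widetilde\ph$ to $\B^d$, and then transport back by $\beta$ to extend $\ph$ to $\Pi^d$.

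The main obstacle I anticipate is bookkeeping rather than conceptual: one must verify that all the Cayley-transform factors $(X_i - 1)\inv$, $(X_i + \ii)\inv$, and the corresponding factors built from $\ph$ are genuinely invertible and holomorphic on the free sets in question (this is where the hypotheses $\IM X_i > 0$ and $\IM \ph(X) \ge 0$, or strict positivity, are used), and that the resulting $u_i$ are still free functions valued in $\tensor{\B(\C,\hilbert)}{\M}$ with the vertical-tensor structure respected. A secondary point of care: the constant $2\ii$ and its conjugate must be handled so that the right-hand side of \eqref{ncpickmodel} comes out with the correct sign, i.e. so that $\ph(X) - \ph(Y)\ad$ (rather than its negative) appears; this is a matter of orienting the Cayley transform so that $\D \to \cc{\Pi^1}$ rather than $\D \to \Ha$, consistent with the paper's convention that free Pick functions map into $\cc{\Pi^1}$. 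Once these checks are in place, the theorem follows formally.
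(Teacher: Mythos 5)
Your approach is exactly the one the paper intends: the paper gives no independent proof of Theorem \ref{ncschur2}, stating only that it follows from the Agler--{\mc}Carthy model theorem (Theorem \ref{ncschur3}) under a M\"obius transform, and your Cayley-transform computation is the correct way to fill in those details. For the record, with $\alpha(w) = (w-i)(w+i)\inv$ the exact identity is $I - \alpha(W')\ad\alpha(W) = -2i\,(W'+i)\adinv\,(W - W'\ad)\,(W+i)\inv$; the same factor $-2i$ arises both in the transformed left-hand side $I - \widetilde\ph(Y)\ad\widetilde\ph(X)$ and in each transformed coordinate kernel $I - Y_i\ad X_i$ of \eqref{ncschurmodel}, so it cancels across the identity, and absorbing the surviving invertible outer factors $(\ph(W)+i)$ and $(W_i+i)\inv$ into redefined $u_i$ produces \eqref{ncpickmodel} exactly as you outline.
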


\subsection{The lurking isometry argument for linear forms}

	The proof of Theorem \ref{ncschur3} relies on the existence of a free version of the standard lurking isometry argument. The additional complications inherent in the free algebraic structure of the models warrant an argument establishing the existence of these isometries in the free case. We prove a lurking isometry argument for linear forms, which makes slightly different assumptions, but can also be used to make model theoretic arguments.

	\begin{proposition}\label{localunitary}
	Suppose that for free functions $\theta$ and $\ph$ taking a set $D \subset \M$ into $\tensor{\hilbert}{\M}$, we have the relation
	\[
	\theta(X)\ad\tensor{I}{H}\theta(X) = \ph(X)\ad \tensor{I}{H}\ph(X)
	\]
	for all $(X, H) \in T(D)$.
	Let $\vecspace_X^\ph$ and $\vecspace_X^\theta$ be the vector spaces given in Definition \ref{localvec}. Then for each $X \in D\cap\M_n^d$, there exists a unique unitary operator $U_X: \vecspace^\theta_X \to \vecspace^\ph_X$ such that for any unitary $U \in U_n$ and any $c \in \C^n$, 
	\[
	\tensor{U_X}{I} \tensor{I}{U}\theta(X)c = \tensor{I}{U}\ph(X)c.
	\]
	\end{proposition}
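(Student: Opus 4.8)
The plan is to construct the unitary $U_X$ directly from the defining identity, showing that the natural assignment $\tensor{I}{U}\theta(X)c \mapsto \tensor{I}{U}\ph(X)c$ extends to a well-defined isometry from $\vecspace_X^\theta \otimes \C^n$ onto $\vecspace_X^\ph \otimes \C^n$ which intertwines the $\tensor{I}{U}$ actions, and then detensoring to obtain $U_X$ on $\vecspace_X^\theta$ itself. First I would fix $X \in D \cap \M_n^d$ and unwind what the hypothesis says at $X$: the relation $\theta(X)\ad \tensor{I}{H}\theta(X) = \ph(X)\ad \tensor{I}{H}\ph(X)$ holding for all $H \in \M_n$ (not just positive ones — it holds on the whole tangent fiber $T(D)$) means that for all $c, c' \in \C^n$ and all $H$,
\[
\ip{\tensor{I}{H}\theta(X)c}{\theta(X)c'} = \ip{\tensor{I}{H}\ph(X)c}{\ph(X)c'}.
\]
By polarization in $H$ (writing an arbitrary operator on $\hilbert \otimes \C^n$ restricted appropriately, or just taking $H$ to range over a spanning set of $\M_n$ together with the rank-one operators), this forces the Gram data of the vectors $\{\theta(X)c : c \in \C^n\}$ against themselves, paired through $\tensor{I}{H}$, to equal that of $\{\ph(X)c\}$. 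The cleanest route: define $V_0$ on the dense subspace $\mathrm{span}\{\theta(X)c : c \in \C^n\} \subset \vecspace_X^\theta \otimes \C^n$ — wait, more precisely on $\mathrm{span}\{\tensor{I}{U}\theta(X)c : U \in U_n, c \in \C^n\}$, which by Definition \ref{localvec} is exactly $\vecspace_X^\theta \otimes \C^n$ — by $V_0(\tensor{I}{U}\theta(X)c) = \tensor{I}{U}\ph(X)c$.

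The key step is to verify $V_0$ is well-defined and isometric. For this I would compute, for finite sums, $\norm{\sum_k \tensor{I}{U_k}\theta(X)c_k}^2 = \sum_{k,l} \ip{\tensor{I}{U_l\ad U_k}\theta(X)c_k}{\theta(X)c_l}\cdot(\text{appropriate contraction})$ — actually the cross terms involve $\ip{\tensor{I}{U_k}\theta(X)c_k}{\tensor{I}{U_l}\theta(X)c_l} = \ip{\tensor{I}{U_l\ad U_k}\theta(X)c_k}{\theta(X)c_l}$, and since $U_l\ad U_k \in \M_n$ is an allowable "$H$", the hypothesis gives that this equals the corresponding expression with $\ph$ in place of $\theta$. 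Summing, $\norm{\sum_k \tensor{I}{U_k}\theta(X)c_k}^2 = \norm{\sum_k \tensor{I}{U_k}\ph(X)c_k}^2$. In particular the left side vanishing implies the right side vanishes, so $V_0$ is well-defined, and the identity of norms shows it is isometric; it is onto $\vecspace_X^\ph \otimes \C^n$ since the right-hand vectors span that space by Definition \ref{localvec}. Thus $V_0$ extends to a unitary $V: \vecspace_X^\theta \otimes \C^n \to \vecspace_X^\ph \otimes \C^n$.

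Next I would check $V$ intertwines the coordinate unitary action: for $W \in U_n$, $V \circ (\tensor{I}{W})$ and $(\tensor{I}{W}) \circ V$ agree on each generator $\tensor{I}{U}\theta(X)c$ (both send it to $\tensor{I}{WU}\ph(X)c$), hence agree. Equivalently, conjugating $V$ by $\tensor{I}{W}$ fixes $V$. Now apply the Detensoring Lemma (Lemma \ref{stsc}) — or rather its natural consequence for operators: an operator between $\vecspace \otimes \C^n$ and $\vecspace' \otimes \C^n$ commuting with all $\tensor{I}{W}$ is of the form $\tensor{U_X}{I}$. (One gets this by applying the detensoring lemma to the graph of $V$ inside $(\vecspace_X^\theta \oplus \vecspace_X^\ph)\otimes \C^n$, whose invariance under $\tensor{I}{W}$ follows from the intertwining, producing a subspace of $\vecspace_X^\theta \oplus \vecspace_X^\ph$ which is the graph of the desired $U_X$; that $U_X$ is unitary follows since $V = \tensor{U_X}{I}$ is.) This $U_X: \vecspace_X^\theta \to \vecspace_X^\ph$ then satisfies $\tensor{U_X}{I}\tensor{I}{U}\theta(X)c = \tensor{I}{U}\ph(X)c$ as required. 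Uniqueness is immediate: any such $U_X$ must satisfy $\tensor{U_X}{I}\theta(X)c = \ph(X)c$ for all $c$ (take $U = I$), and the vectors $\theta(X)c$ together with their images under $\tensor{I}{U}$ span $\vecspace_X^\theta \otimes \C^n$, pinning down $\tensor{U_X}{I}$ and hence $U_X$.

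The main obstacle I anticipate is purely bookkeeping: making the detensoring step rigorous for operators rather than subspaces, i.e. confirming that "$V$ commutes with every $\tensor{I}{W}$" genuinely yields $V = \tensor{U_X}{I}$ for a well-defined $U_X$ on the possibly-different spaces $\vecspace_X^\theta$ and $\vecspace_X^\ph$. The graph trick handles this cleanly once one observes that $\vecspace_X^\theta$ and $\vecspace_X^\ph$ are honest subspaces of the ambient $\hilbert$ (so their direct sum sits inside $\hilbert \oplus \hilbert$ and the detensoring lemma applies verbatim). The only other point needing a word of care is that the hypothesis is stated for $(X,H) \in T(D)$ with $H$ ranging over all of $\M_n^d$ — but here $\theta, \ph$ map into $\tensor{\hilbert}{\M}$ with a single intrinsic slot, so "$H$" is a single matrix in $\M_n$ and ranges over all of $\M_n$; in particular products $U_l\ad U_k$ of unitaries are covered, which is all the argument needs.
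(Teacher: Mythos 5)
Your proposal is correct and follows the same essential path as the paper: compute that the inner products $\ip{\tensor{I}{U_l\ad U_k}\theta(X)c_k}{\theta(X)c_l}$ agree with the $\ph$-versions by taking $H = U_l\ad U_k$, obtain a unitary on $\vecspace^\theta_X \otimes \C^n$, show it commutes with all $\tensor{I}{W}$, and conclude it has the form $\tensor{U_X}{I}$. Your graph-of-$V$ argument makes the final detensoring step explicit, whereas the paper invokes "uniqueness implies $\tensor{I}{U^*}\tilde U_X \tensor{I}{U} = \tilde U_X$ and thus $\tilde U_X = \tensor{U_X}{I}$" more tersely, relying implicitly on Lemma \ref{stsc}; otherwise the two proofs coincide.
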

	\begin{proof}
	Let $c_1, c_2 \in \C_n$ and $U_1, U_2 \in \mathcal U_n$. Then
	\begin{align*}
	c_2 \theta(X)\ad \tensor{I}{U_2^*U_1}\theta(X)c_1 = 
	 \left(\tensor{I}{U_2\ad}\ph(Y)c_2\right)\ad\left(\tensor{I}{U_1}\ph(X)c_1\right).
	\end{align*}
	Since these inner products agree, there is a uniquely determined partial isometry
	$\tilde{U}_X: \tensor{\vecspace^\theta_X}{\mathbb{C}^n} \to \tensor{\vecspace^\ph_X}{\mathbb{C}^n}$ so that
	\[
	\tilde{U}_X \tensor{I}{U} \theta(X) c = \tensor{I}{U} \ph(X) c
	\]
	for all $U$ and $c$. 
	Now, note that for $U_1, U_2 \in \mathcal U_n$,
		\[
		\tilde{U}_X \tensor{I}{U_1} \tensor{I}{U_2} \theta(X) c = \tensor{I}{U_1} \tensor{I}{U_2} \ph(X) c.
		\]
	Rearranging this equation gives
		\[
		\tensor{I}{U_1^*}\tilde{U}_X \tensor{I}{U_1} \tensor{I}{U_2} \theta(X) c =  \tensor{I}{U_2} \ph(X) c.
		\]
	Note that the uniqueness implies
		$$\tensor{I}{U^*} \tilde{U}_X \tensor{I}{U} = \tilde{U}_X,$$
	and thus $\tilde{U}_X = \tensor{U_X}{I}.$
	\end{proof}

	 The uniqueness of $U_X$ gives the following.
	\begin{proposition}\label{patchAgree}
	Suppose that for free functions $\theta$ and $\ph$ taking a set $D \subset \M$ into $\tensor{\hilbert}{\M}$, we have the relation
	\[
	\theta(X)\ad\tensor{I}{H}\theta(X) = \ph(X)\ad \tensor{I}{H}\ph(X)
	\]
	for all $(X, H) \in T(D)$.
	Let $X, Y \in D$ such that $Y$ dominates $X$. For $U_X$ as defined in Proposition \ref{localunitary},
	\[
	P^\ph_X  U_{Y} P^{\theta *}_X = U_X.
	\]
	\end{proposition}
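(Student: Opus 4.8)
The plan is to invoke the uniqueness clause of Proposition \ref{localunitary}: $U_X$ is the \emph{only} unitary $\vecspace^\theta_X\to\vecspace^\ph_X$ obeying $\tensor{U_X}{I}\,\tensor{I}{U}\,\theta(X)c=\tensor{I}{U}\,\ph(X)c$ for all $U\in U_n$ and $c\in\C^n$ (with $X\in\M_n$). So I would set $T:=P^\ph_X U_Y P^{\theta*}_X$ and show it satisfies this same identity; then $\tensor{T}{I}$ and $\tensor{U_X}{I}$ agree on the spanning family $\{\tensor{I}{U}\theta(X)c\}$ of $\tensor{\vecspace^\theta_X}{\C^n}$ (Definition \ref{localvec}), forcing $T=U_X$. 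No separate verification that $T$ is unitary is then needed.

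First I would observe that $Y$ dominating $X$ forces $\vecspace^\theta_X\subseteq\vecspace^\theta_Y$ and $\vecspace^\ph_X\subseteq\vecspace^\ph_Y$; this is also exactly what makes the composite $P^\ph_X U_Y P^{\theta*}_X$ legitimate, since $U_Y$ is defined on $\vecspace^\theta_Y$. For $g\in H^2_\theta$, the map $Z\mapsto\tensor{g\ad}{I}\theta(Z)$ is a scalar free function (it is graded and respects direct sums and similarity because $\theta$ does and $g\ad$ is applied in the extrinsic slot only), and by the remark after Definition \ref{localvec} (applied to $\theta$) it vanishes at $Z$ precisely when $g\perp\vecspace^\theta_Z$. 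Domination then yields $(\vecspace^\theta_Y)^\perp\subseteq(\vecspace^\theta_X)^\perp$, i.e. the first containment, and the same argument with $\ph$ gives the second.

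The heart of the matter is the operator identity $\tensor{U_Y}{I}\,\theta(X)=\ph(X)$ as maps $\C^n\to\tensor{\hilbert}{\C^n}$, which now typechecks by the containment. I would prove it by an interpolation argument. Extend $U_Y$ to $\bar U_Y:=U_Y P^\theta_Y$ on all of $H^2_\theta$ and set $\tau(Z):=\tensor{\bar U_Y}{I}\theta(Z)$ for $Z\in D$. Since $\tensor{\bar U_Y}{I}$ acts only in the extrinsic slot, it commutes with the direct-sum and similarity operations through which $\theta$ is equivariant, so $\tau$ is again a free function on $D$; taking $U=I$ in the defining identity of $U_Y$ gives $\tensor{U_Y}{I}\theta(Y)=\ph(Y)$, hence $\tau(Y)=\ph(Y)$. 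Thus $\tau-\ph$ is a $\tensor{\hilbert}{\M}$-valued free function vanishing at $Y$, and applying the hypothesis that $Y$ dominates $X$ (to $\tau-\ph$, or to each scalarization $\tensor{h\ad}{I}(\tau-\ph)$ with $h\in\hilbert$) gives $\tau(X)=\ph(X)$, which is the identity claimed.

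With these in hand the rest is bookkeeping: for $U\in U_n$ and $c\in\C^n$, the vector $\tensor{I}{U}\theta(X)c$ lies in $\tensor{\vecspace^\theta_X}{\C^n}\subseteq\tensor{\vecspace^\theta_Y}{\C^n}$, so $\tensor{P^{\theta*}_X}{I}$ carries it unchanged into $\tensor{H^2_\theta}{\C^n}$; then $\tensor{U_Y}{I}$, commuting with $\tensor{I}{U}$ and using the heart identity, sends it to $\tensor{I}{U}\ph(X)c\in\tensor{\vecspace^\ph_X}{\C^n}$, which is fixed by $\tensor{P^\ph_X}{I}$. Hence $\tensor{T}{I}\,\tensor{I}{U}\theta(X)c=\tensor{I}{U}\ph(X)c$, matching the identity characterizing $U_X$, and uniqueness gives $T=U_X$. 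The only substantive step is the heart identity, and within it the two points that deserve care are (i) confirming that post-composition by a fixed extrinsic-slot operator preserves the free-function property, so that $\tau$ genuinely is a free function, and (ii) invoking domination for the vector-valued difference $\tau-\ph$ rather than merely for scalar free functions; everything else is manipulation of the spaces $\vecspace^\bullet_\bullet$ and their projections.
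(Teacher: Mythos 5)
Your argument is correct, and it makes rigorous what the paper compresses into a single sentence. The paper's proof simply asserts that $P^\ph_X U_Y P^{\theta*}_X$ is unitary and invokes the uniqueness clause of Proposition \ref{localunitary}; it leaves unstated both why the compression is unitary (a compression of a unitary is in general only a contraction) and why it satisfies the defining identity $\tensor{T}{I}\tensor{I}{U}\theta(X)c = \tensor{I}{U}\ph(X)c$ that is needed to trigger uniqueness. Your proof supplies exactly what is missing: the ``heart identity'' $\tensor{U_Y}{I}\theta(X) = \ph(X)$, obtained by showing that $\tensor{\bar U_Y}{I}\theta - \ph$ is a free function vanishing at $Y$ and transporting that vanishing to $X$ via domination. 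From there the containments $\vecspace^\theta_X \subseteq \vecspace^\theta_Y$, $\vecspace^\ph_X \subseteq \vecspace^\ph_Y$ (which you derive from domination via the scalarizations $Z \mapsto \tensor{g^*}{I}\theta(Z)$) both make the composite well-typed and show that $U_Y$ carries the spanning set of $\tensor{\vecspace^\theta_X}{\C^n}$ onto the spanning set of $\tensor{\vecspace^\ph_X}{\C^n}$, so the compression satisfies the defining identity and uniqueness closes the argument. Your observation that no separate verification of unitarity is then required is a genuine streamlining relative to the paper's stated route: the defining identity determines the operator on a spanning set, and the unique solution is $U_X$, which is unitary by Proposition \ref{localunitary}. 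In short: same strategy as the paper (reduce to uniqueness from the lurking-isometry lemma), but your version actually carries out the reduction.
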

	\begin{proof}
		Note $P^\ph_X U_{Y} P^{\theta *}_X $ is unitary and thus by uniqueness, $P^\ph_X U_{ Y} P^{\theta *}_X = U_X.$
	\end{proof}

	\begin{theorem}[Lurking isometry argument for linear forms]\label{lurk}
	Let $\theta$ and $\ph$ be free functions on a set $D \subset \M^d$ taking values in $\tensor{\hilbert}{\M}$.   If for all $(X,H) \in T(D)$,
	\[
	\theta(X)\ad\tensor{I}{H}\theta(X) = \ph(X)\ad \tensor{I}{H}\ph(X)
,	\]
	then there exists
	an isometry $U: H^2_\theta \to H^2_\ph$ such that for all $X \in D$,
	\[
	\tensor{U}{I} \theta(X) = \ph(X).
	\]
	\end{theorem}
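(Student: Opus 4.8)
The plan is to glue the family of local unitaries $U_X$ produced by Proposition \ref{localunitary} into one global isometry $U\colon H^2_\theta\to H^2_\ph$, using direct sums to organize the compatibility between different base points. First I would record the basic compatibility input: for any finite list $X_1,\dots,X_k\in D$, the direct sum $Z:=X_1\oplus\dots\oplus X_k$ again lies in $D$ (since $D$ is a free set), and $Z$ dominates each $X_i$. Indeed every $f\in H^2_\theta$, being obtained from the free function $\theta$, respects direct sums, so $f(Z)=f(X_1)\oplus\dots\oplus f(X_k)$ and hence $f(Z)=0$ forces $f(X_i)=0$. Since $f(Y)=0$ is equivalent to $f\in(\vecspace^\theta_Y)^\perp$ (as noted after Definition \ref{localvec}), this yields the inclusions $\vecspace^\theta_{X_i}\subseteq\vecspace^\theta_Z$ for all $i$ (and the analogous statement for $\ph$); in particular $\vecspace^\theta_Z$ contains any finite sum of vectors drawn from the $\vecspace^\theta_{X_i}$.

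The crux --- and the step I expect to require the most care --- is to upgrade Proposition \ref{patchAgree} from the identity $P^\ph_X U_Y P^{\theta *}_X=U_X$ to the genuine restriction statement: if $Y$ dominates $X$, then $U_Y$ maps $\vecspace^\theta_X$ into $\vecspace^\ph_X$ and $U_Y|_{\vecspace^\theta_X}=U_X$. I would prove this by a norm argument: for $v\in\vecspace^\theta_X\subseteq\vecspace^\theta_Y$, Proposition \ref{patchAgree} gives $\|P^\ph_X U_Y v\|=\|U_X v\|=\|v\|$, while $\|U_Y v\|=\|v\|$ because $U_Y$ is unitary; since $P^\ph_X$ is an orthogonal projection, the equality $\|P^\ph_X U_Y v\|=\|U_Y v\|$ forces $U_Y v\in\vecspace^\ph_X$ and $P^\ph_X U_Y v=U_Y v$, so $U_Y v=U_X v$.

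With this in place, I would build $U$ first on the subspace $\mathcal S:=\operatorname{span}\bigcup_{X\in D}\vecspace^\theta_X$, which is dense in $H^2_\theta$ by Proposition \ref{denseSpanGen}. For $f\in\mathcal S$, express $f$ as a finite sum of vectors from $\vecspace^\theta_{X_1},\dots,\vecspace^\theta_{X_k}$, put $Z:=\bigoplus_i X_i$ so that $f\in\vecspace^\theta_Z$ by the first paragraph, and set $Uf:=U_Z f$. The restriction statement makes this robust: any two candidate direct sums $Z,Z'$ both sit inside $W:=Z\oplus Z'$, and $U_W$ restricts to $U_Z$ on $\vecspace^\theta_Z$ and to $U_{Z'}$ on $\vecspace^\theta_{Z'}$, so $U_Z f=U_W f=U_{Z'}f$; the same trick of passing to a single large enough direct sum $W$ shows $U$ is linear on $\mathcal S$ and $\|Uf\|=\|U_W f\|=\|f\|$. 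Hence $U$ extends uniquely to an isometry $U\colon H^2_\theta\to H^2_\ph$.

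Finally I would verify the intertwining identity $\tensor{U}{I}\theta(X)=\ph(X)$. Fixing $X\in D\cap\M^d_n$ and taking $Z=X$ in the construction shows $U|_{\vecspace^\theta_X}=U_X$. The columns of $\theta(X)$ lie in $\tensor{\vecspace^\theta_X}{\C^n}$ (take the unitary to be $I_n$ in Definition \ref{localvec}), so $\tensor{U}{I}$ agrees with $\tensor{U_X}{I}$ on them, and Proposition \ref{localunitary} with the unitary chosen to be $I_n$ gives $\tensor{U_X}{I}\theta(X)=\ph(X)$; therefore $\tensor{U}{I}\theta(X)=\ph(X)$. The only genuinely delicate point is the restriction statement of the second paragraph; granting it, the rest is the routine bookkeeping of a gluing argument over the direct-sum-directed family of base points.
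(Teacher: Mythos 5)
Your proof follows the same direct-limit/patching strategy as the paper: glue the local unitaries $U_X$ of Proposition \ref{localunitary} along the inclusions of the spaces $\vecspace^\theta_X$ (directed because free sets are closed under direct sums and $X\oplus Y$ dominates both $X$ and $Y$), invoke Proposition \ref{patchAgree} for compatibility, and extend by density via Proposition \ref{denseSpanGen}. Your second-paragraph norm argument makes explicit a step the paper leaves implicit in its one-line proof of Proposition \ref{patchAgree}---namely that $U_Y$ genuinely carries $\vecspace^\theta_X$ into $\vecspace^\ph_X$ and restricts to $U_X$ there, which is what the phrase ``direct limit with respect to inclusion of domains'' actually requires---so your write-up is a correct and slightly more careful rendering of the paper's argument.
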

	\begin{proof}
		The direct limit of the $U_X$ with respect to inclusion of domains
			$$\hat{U}: \bigcup_{X\in D} \vecspace^{\theta}_X \rightarrow \bigcup_{X\in D} \vecspace^{\ph}_X$$
		is well defined by \ref{patchAgree}.
		Furthermore, $\hat{U}$ extends as an isometry $U: H^2_\theta \to H^2_\ph$ since the domain
		of $\hat{U}$ is dense in $H^2_\theta$ by Proposition \ref{denseSpanGen}. Since $P^{\ph}_X U P^{\theta *}_X = U_X,$ $U$ satisfies the required properties.
	\end{proof}

\section{L\"owner's theorem} \label{LOWNER}
	Let $f: (a,b) \rightarrow \mathbb{R}.$
	We say $f$ is \emph{matrix monotone} if
		$$A \leq B \Rightarrow f(A) \leq f(B).$$
	In 1934, L\"owner \cite{lo34} showed that if $f$ is matrix monotone, then $f$ analytically continues to
	$\Pi_1 \cup (a,b)$ so that
	$f:\Pi_1 \cup (a,b) \rightarrow \overline{\Pi_1}.$

	In general, we define \emph{locally monotone} functions as follows. This definition agrees with classical monotonicity on convex sets since
		$$f(X) - f(Y) = \int_0^1 Df(X + t(X - Y))[X-Y] \, \dd t$$
	by the fundamental theorem of calculus.
	\begin{definition}
		A real analytic free function $f: D \rightarrow \R$  is \emph{locally monotone} if
		$$H\geq 0 \Rightarrow Df(X)[H] \geq 0$$
		for all $(X,H) \in T(D)$.
	\end{definition}

	The following definition codifies the extension property in the free setting.
	\begin{definition}
		A real analytic free function $f: D \rightarrow \R$  has a \emph{L\"owner extension} if there is a
		continuous free function $F: \Pi^n \cup D \rightarrow \overline{\Pi}$ such that $F|_D= f.$
	\end{definition}
	
	Our goal is to give a version of L\"owner's theorem for the noncommutative functional calculus.
	\begin{theorem}\label{lowTheorem}
		A real analytic free function $f: D \rightarrow \R$ is locally monotone if and only if $f$ has a L\"owner extension. 
	\end{theorem}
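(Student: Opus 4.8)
The plan is to prove the two implications separately, with essentially all the work in the implication \emph{locally monotone $\Rightarrow$ L\"owner extension}. For the easy direction, suppose $f$ has a L\"owner extension $F$. Fix $(X,H)\in T(D)$ with $H\ge 0$, say $X\in D\cap\RR_n^d$, and a unit vector $\xi$. For $\ep>0$ the affine map $z\mapsto X+\ii z(H+\ep I)$ carries the right half plane $\{\RE z>0\}$ into $\Pi^d$ (the imaginary part of the $i$-th slot is $\RE z\,(H_i+\ep I)>0$) and carries a neighborhood of $0$ on the imaginary axis into $D$. Hence $g(z):=\ip{F(X+\ii z(H+\ep I))\xi}{\xi}$ is holomorphic on a right half-disk about $0$, continuous up to the diameter, and real there (it equals $\ip{f(X-t(H+\ep I))\xi}{\xi}$ on $z=\ii t$). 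By the Schwarz reflection principle $g$ extends holomorphically across $0$; differentiating its imaginary-axis restriction gives $g'(0)=\ii\,\ip{Df(X)[H+\ep I]\xi}{\xi}$, and since $f$ is a real free function $Df(X)[H+\ep I]$ is self-adjoint, so that inner product is real. As $\IM g\ge 0$ to the right of $0$ and $\IM g(0)=0$, we get $\IM g'(0)\ge 0$, i.e. $\ip{Df(X)[H+\ep I]\xi}{\xi}\ge 0$; letting $\ep\to 0$ and varying $\xi$ yields $Df(X)[H]\ge 0$, so $f$ is locally monotone.

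For the hard direction the key is to produce a \emph{model for the derivative}. First, for fixed $X\in D_n$ the map $K\mapsto Df(X)[(0,\dots,K,\dots,0)]$ (with $K$ in the $i$-th slot) is positive by local monotonicity, and Proposition \ref{tenDer} identifies its $m$-th ampliation with the same map evaluated at $X\otimes I_m$, which is again positive; hence this map is completely positive and the Choi--Kraus theorem (Theorem \ref{choi}) applies. Using equivariance of $Df(X)[\cdot]$ under unitary conjugation and direct sums, together with reduction to reduced dominating points (Lemmas \ref{domReduction} and \ref{dominationDer}), the Choi--Kraus data can be arranged to be free polynomials evaluated at $X$, giving a \emph{local} representation $Df(X)[H]=\sum_{i,k}p_{ik}(X)\ad H_i\, p_{ik}(X)$. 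To globalize, I would introduce two free functions $\theta,\ph$ on $D$ valued in $\tensor{\hilbert}{\M}$ — roughly $\theta$ encoding the universal monomial vector at $X$ and $\ph$ encoding the square-root pieces of the sum of squares — set up so that $\theta(X)\ad\tensor{I}{H}\theta(X)=\ph(X)\ad\tensor{I}{H}\ph(X)=Df(X)[H]$ for all $(X,H)\in T(D)$, and then invoke the lurking isometry argument for linear forms (Theorem \ref{lurk}). This produces an isometry $H^2_\theta\to H^2_\ph$ and hence free functions $u_1,\dots,u_d\colon D\to\tensor{\B(\C,\hilbert)}{\M}$ with
\[
Df(X)[H]=\sum_i u_i(X)\ad\,\tensor{I}{H_i}\,u_i(X)\qquad\text{for all }(X,H)\in T(D).
\]

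Next I would convert this global derivative model into a model for $f$ itself. Evaluating the identity at the block point $\bpm X&\\&Y\epm$ in the direction $H_i=\bbm &X_i-Y_i\\X_i-Y_i&\ebm$, and using Proposition \ref{derUnit} together with the fact that the $u_i$ respect direct sums, the off-diagonal entries yield
\[
f(X)-f(Y)\ad=\sum_i u_i(Y)\ad\,\tensor{I}{X_i-Y_i\ad}\,u_i(X)\qquad(X,Y\in D).
\]
This is precisely the functional relation characterizing the free Pick class in Theorem \ref{ncschur2}, except that $D$ lies on the distinguished boundary $\RR^d$ of $\Pi^d$ rather than inside it. A final lurking-isometry step applied to this relation produces a Type I realization, $Z\mapsto v\ad\big(A-\sum_i Y_i^{(0)}\otimes Z_i\big)\inv v$ with $A$ self-adjoint and each $Y_i^{(0)}\ge 0$, which defines a free function on $\Pi^d$, maps $\Pi^d$ into $\cc\Pi$ by the sign of its imaginary part, extends continuously to $D$, and restricts to $f$ there; this is the desired L\"owner extension.

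The main obstacle I expect is the globalization step: the Choi--Kraus decomposition is purely pointwise — both the polynomials $p_{ik}$ and their number vary with $X$ — and organizing them into free functions defined uniformly on $D$ is exactly the purpose of the free coefficient Hardy space and the lurking isometry argument, with the delicate part being the correct choice of $\theta$ and $\ph$ and the verification that their quadratic forms both reproduce $Df(X)[H]$. A secondary difficulty is the boundary-to-interior passage in the last step: Theorem \ref{ncschur2} is stated for domains inside $\Pi^d$, so promoting the relation that holds only on $D\subseteq\RR^d$ to a genuine analytic continuation on the open set $\Pi^d$ will require either a perturbation/limiting argument or a direct construction of the realization together with a continuity check along the real boundary.
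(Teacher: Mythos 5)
Your outline identifies the right ingredients --- Choi--Kraus via Proposition \ref{tenDer}, the free coefficient Hardy space, the lurking isometry, Proposition \ref{derUnit}, and Theorem \ref{ncschur2} --- and your easy direction is correct (a Schwarz-reflection argument that differs from, and is somewhat more laborious than, the paper's one-line Taylor expansion $\IM f(X+\ii tH)=tDf(X)[H]+O(t^2)$). However, the assembly of the hard direction has two genuine gaps.

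First, your globalization step cannot proceed by invoking the lurking isometry argument (Theorem \ref{lurk}) directly on the Choi--Kraus data. That theorem requires \emph{both} $\theta$ and $\ph$ to already be free functions defined on all of $D$, and the Choi--Kraus pieces $p_{ik}(X)$ are a pointwise, non-canonical collection whose number and form vary with $X$; there is no free function $\ph$ there to compare against. What the paper actually does is entirely different: it fixes a global monomial basis vector $m$, uses Choi--Kraus to write $Df(X)[H]=\sum_i m_X\ad\tensor{K_i\ad K_i}{H_i}m_X$ pointwise, then proves (Lemma \ref{unique}) that the compressions $A^i_X:=P^m_X K_i\ad K_i P^m_X$ are \emph{uniquely determined} by $f$ at $X$, and finally builds the global $u_i$ not by a lurking isometry but by taking the direct limit of the positive sesquilinear forms $\ip{A^i_Xu}{v}$ over $\vecspace=\bigcup_X\vecspace^m_X$, completing the quotient by the kernel. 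This uniqueness-plus-direct-limit construction is the missing idea; the lurking isometry simply is not the tool that welds the pointwise sums of squares into one model.

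Second, you treat the boundary-to-interior passage as a secondary difficulty to be handled by an unspecified ``perturbation/limiting argument,'' but this is in fact where the lurking isometry earns its keep (Lemma \ref{analCont}). Because $D\subset\RR^d$ is on the distinguished boundary of $\Pi^d$, Theorem \ref{ncschur2} cannot be applied until one knows the $u_i$ analytically continue off $\RR^d$. The paper establishes this by constructing a \emph{second}, explicit realization of $Df(X)[H]$ from the power series of $f$ at a point via the positive semidefinite localizing matrices $(c_{I^*x_kJ})_{I,J}$ (Theorem \ref{heltonHankelX}), and then applying Theorem \ref{lurk} to identify this explicit, manifestly analytic model with the abstract Hamburger model; since they are unitarily intertwined, the abstract $u_i$ inherit analyticity and (via Corollary \ref{commieCor}) freeness. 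Without this comparison, you have a Hamburger model defined only on the real slice and no mechanism for extending $f$ into $\Pi^d$; a naive perturbation argument does not supply the needed free-holomorphic extension of the model functions.
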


\subsection{The Hamburger model}	
	Our first kind of model, the \emph{Hamburger model} gives information about the derivative.
	\begin{definition}
		Let $f: D \rightarrow \RR.$ A \emph{Hamburger model} for $f$ is a list of $d$ real free functions
		$u_i:D \rightarrow \tensor{\mathcal{H}}{\RR}$ such that for all $(X,H) \in T(D),$
			$$Df(X)[H] =\sum_i u_i(X)^*\tensor{I}{H_i}u_i(X).$$
	\end{definition}	
	
	Our second kind of model gives nonlocal data. This has been analyzed in the commutative case on polydisks by Ball and Bolotnikov \cite{babo02}.
	\begin{definition} \label{boundarynev}
		Let $f: D \rightarrow \RR.$ A
		\emph{boundary Nevanlinna model} for $f$ is a list of $d$ real free functions
		$u_i:D \rightarrow \tensor{\mathcal{H}}{\RR}$ such that for all $X, Y$ of the same dimension,
			$$f(X)- f(Y)^* =\sum_i u_i(Y)^*\tensor{I}{X_i-Y_i^*}u_i(X)$$
		and for all $(X,H) \in T(D)$,
			$$Df(X)[H] =\sum_i u_i(X)^*\tensor{I}{H_i}u_i(X).$$
	\end{definition}
	The Hamburger model and the boundary Nevanlinna model are related in these sense that if we have one, we can obtain the other via the relations of free analysis. These can be explicitly computed and are essentially equivalent up to isometry by the lurking isometry for linear forms. We discuss these
	computations in Section \ref{hankel}.
	
	The following expands Theorem \ref{lowTheorem} to give the actual strategy for proof.
	\begin{theorem}\label{lownerfull}
		Let $f: D \rightarrow \R$ be a real analytic free function.
		The following are equivalent:
		\begin{enumerate}
			\item $f$ is locally monotone,
			\item $f$ has a Hamburger model,
			\item $f$ has a boundary Nevanlinna model,
			\item $f$ has a L\"owner extension.
		\end{enumerate}
	\end{theorem}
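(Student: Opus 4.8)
The plan is to prove Theorem \ref{lownerfull} by establishing the cycle of implications $(1) \Rightarrow (2) \Rightarrow (3) \Rightarrow (4) \Rightarrow (1)$, with the bulk of the work in the first two arrows.

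For $(1) \Rightarrow (2)$, the strategy sketched in the introduction is to produce a global sum-of-squares representation of the derivative. First I would work at a single reduced dominating point $X$: since $H \mapsto Df(X)[H]$ is positive in $H$ (by local monotonicity) and in fact completely positive (because $Df(X \otimes I_n)[H \otimes B] = Df(X)[H]\otimes B$ by Proposition \ref{tenDer}, so positivity is preserved under ampliation), the Choi--Kraus theorem (Theorem \ref{choi}) gives $Df(X)[H] = \sum_k V_k\ad \tensor{I}{H} V_k$ for operators $V_k$; one then argues, using that $X$ is reduced and that $Df(X)$ is determined by the free structure, that the $V_k$ are given by evaluating fixed elements of the free coefficient Hardy space $H^2_d$ at $X$, i.e. $V_k = v_k(X)$ with $v_k \in H^2_d$ (this is the content of the promised Lemma \ref{finitehamburger}). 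The local matrices $(a_{ij})_X = \sum_k v_k v_k\ad$ restricted to $\vecspace_X$ are unique (two Hamburger-type expressions agreeing on $X$ must agree after compression by $P_X$, by the uniqueness in the lurking-isometry machinery), so by Proposition \ref{denseSpan} --- the Szeg\"o spaces $\vecspace_X$ exhaust $H^2_d$ as $X$ ranges over the domain --- these patch to a single positive operator on $H^2_d$, yielding $u_i(X) = \tensor{(\cdot)}{I} m_X$ type functions and hence a Hamburger model valid on all of $D$.

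For $(2) \Rightarrow (3)$, I would use Proposition \ref{derUnit}: applying the Hamburger model to the block point $\bpm X & \\ & Y \epm$ with the off-diagonal direction $\bbm & X-Y \\ X-Y & \ebm$ and reading off the off-diagonal block of $Df\bpm X & \\ & Y\epm[\cdots] = \bpm & f(X)-f(Y) \\ f(X)-f(Y) & \epm$ converts the local model into the two-point identity $f(X) - f(Y)\ad = \sum_i u_i(Y)\ad \tensor{I}{X_i - Y_i\ad} u_i(X)$ defining the boundary Nevanlinna model (the diagonal blocks recover the Hamburger identity). For $(3) \Rightarrow (4)$: the two-point identity is exactly the hypothesis of the model theorem on the upper half plane, Theorem \ref{ncschur2}, so $f$ extends to a free function $\Pi^d \to \overline{\Pi}$; continuity of the extension up to $D$ follows from the model formula itself (the kernel terms $X_i - Y_i\ad$ degenerate appropriately as one approaches self-adjoint points). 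Finally $(4) \Rightarrow (1)$ is the easy direction: if $f$ extends to the free Pick class then for self-adjoint $X$ and $H \geq 0$ the function $t \mapsto f(X + \ii t H)$ maps the upper half plane into $\overline{\Pi^1}$, so its imaginary part is nonnegative, and differentiating at $t = 0^+$ gives $Df(X)[H] \geq 0$; the fundamental-theorem-of-calculus identity quoted before Definition of locally monotone then upgrades local monotonicity to genuine matrix monotonicity on convex $D$.

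The main obstacle is the patching step inside $(1)\Rightarrow(2)$: turning the pointwise Choi--Kraus data into honest free functions on $D$ and showing the local positive matrices $(a_{ij})_X$ are restrictions of one global positive operator. This requires (i) that the $V_k$ from Choi--Kraus at a reduced point genuinely come from $H^2_d$ elements rather than arbitrary operators --- here one leans on reducedness (Lemma \ref{domReduction}) and the derivative-domination lemma (Lemma \ref{dominationDer}) to transport the representation between dominating points --- and (ii) a compatibility check that the compressions $P_X(a_{ij})P_X$ agree on overlaps, which is the linear-form analogue of the lurking-isometry uniqueness (Propositions \ref{localunitary}, \ref{patchAgree}) applied to the functions $u_i$. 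Everything else is either a direct appeal to a cited model theorem or a short computation with Proposition \ref{derUnit}.
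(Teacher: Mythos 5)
Your overall architecture agrees with the paper: $1\Rightarrow 2$ via pointwise Choi--Kraus at reduced dominating points followed by a lurking-isometry patch over the dense union of the $\vecspace_X$; $2\Leftrightarrow 3$ via the block-diagonal derivative identity of Proposition \ref{derUnit}; and $4\Rightarrow 1$ by differentiating $\IM f(X+itH)$ at $t=0$. Your description of $1\Rightarrow 2$ correctly isolates the two nontrivial points (forcing the Choi--Kraus operators to be polynomial in a reduced $X$, and the compatibility of the compressed positive operators across dominating points), which is exactly the content of Lemma \ref{finitehamburger}, Lemma \ref{unique}, and the patching lemma.

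However, there is a genuine gap in your step $(3)\Rightarrow(4)$. You invoke Theorem \ref{ncschur2}, but that theorem requires the model data to live on a free set $D\subset\Pi^d$. Your boundary Nevanlinna model is given only on $D\subset\RR^d$, which sits on the topological boundary of $\Pi^d$ -- self-adjoint tuples have zero, not positive, imaginary part -- so the hypotheses of Theorem \ref{ncschur2} are not met as stated, and ``the two-point identity is exactly the hypothesis'' is not quite true. Your remark about the kernel terms $X_i-Y_i\ad$ ``degenerating appropriately'' addresses boundary continuity of an already-constructed extension, not the prior problem of obtaining a model on an open subset of $\Pi^d$. The paper closes this gap with Lemma \ref{analCont}: it first shows that the Hamburger model functions $u_i$ analytically continue off $D$ into a free domain in $\M^d$, by comparing (via the lurking isometry for linear forms) the abstract $u_i$ against the explicit power-series model built from the $x_k$-localizing matrices of Section \ref{hankel}, whose entrywise decay gives compactness and hence convergence of the continuation. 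Only after this continuation does one form the Nevanlinna model on an open piece of $\Pi^d$ and feed it to Theorem \ref{ncschur2}, as in the proof of Theorem \ref{nevrawr}. Without this analytic continuation step your chain $3\Rightarrow 4$ does not go through; the rest of your cycle is sound.
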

	
	We regard the implication $(1 \Rightarrow 2)$ to be the novel part of the proof. The implication
	$(2 \Leftarrow 1)$ holds \emph{a fortiori} because of the form of the Hamburger model. We devote the rest
	of this section to proving the simpler parts, and will prove $(1 \Rightarrow 2)$ as Section \ref{construction}.
	The implication $(1\Rightarrow 4)$ in Theorem \ref{lownerfull} is proven as Theorem \ref{nevrawr}.
	
	The following lemma proves $(2\Leftrightarrow 3)$ in Theorem \ref{lownerfull}.
	\begin{lemma} \label{nevham}
		A real free analytic function $f: D \rightarrow \R$ has a Hamburger model if and only if $f$ has a boundary Nevanlinna model.
	\end{lemma}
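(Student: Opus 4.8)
The plan is to exploit the lurking isometry argument for linear forms (Theorem \ref{lurk}) to move between the two model formats. The key observation is that both a Hamburger model and a boundary Nevanlinna model are packaged in terms of the same kind of data: a tuple of real free functions $u_i$ whose associated ``square'' recovers the relevant quantity. The direction that requires real work is going from a Hamburger model to a boundary Nevanlinna model; the converse is essentially immediate from Definition \ref{boundarynev}, since a boundary Nevanlinna model already carries the derivative identity $Df(X)[H] = \sum_i u_i(X)^*\tensor{I}{H_i}u_i(X)$ as part of its definition, so the same $u_i$ serve as a Hamburger model.

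For the nontrivial direction, suppose $f$ has a Hamburger model $u_i : D \to \tensor{\mathcal H}{\RR}$. First I would rewrite the derivative identity using the real free derivative identity of Proposition \ref{derUnit}: evaluating the Hamburger identity at the block point $\bpm X & \\ & Y \epm$ in the off-diagonal direction $H_i$ with $H = \bbm & X_i - Y_i \\ X_i - Y_i & \ebm$ and reading off the off-diagonal block produces, via Proposition \ref{derUnit}, the difference $f(X) - f(Y)$ on the left-hand side and $\sum_i u_i(Y)^* \tensor{I}{X_i - Y_i} u_i(X)$ (up to the appropriate identification of blocks) on the right. One must be careful that the block-matrix $Y$ should be replaced by $Y^*$ where it appears on the left of an inner product so that the identity reads $f(X) - f(Y)^* = \sum_i u_i(Y)^* \tensor{I}{X_i - Y_i^*} u_i(X)$; this is handled by noting that on self-adjoint tuples $Y = Y^*$ and then analytically continuing / using that both sides are free and agree on a reduced set of points (Lemma \ref{domReduction}), or more directly by keeping track of adjoints carefully through the block computation. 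The upshot is that the very same functions $u_i$ witnessing the Hamburger model also witness the boundary Nevanlinna model, so no new lurking isometry is even needed—both identities hold for one and the same tuple $(u_i)$.

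The step I expect to be the main obstacle is the bookkeeping with adjoints and the passage from the ``$f(X) - f(Y)$ with $Y$ self-adjoint'' statement obtained directly from Proposition \ref{derUnit} to the fully general ``$f(X) - f(Y)^*$'' statement of Definition \ref{boundarynev}, since a priori the Hamburger model is only specified on the tangent bundle $T(D)$ over self-adjoint points, whereas the boundary Nevanlinna identity is a two-point statement. I would resolve this either by the detensoring/domination machinery already in the paper or by observing that Proposition \ref{derUnit} is an identity of real free functions and so the derived two-point identity propagates; alternatively, one can invoke Theorem \ref{lurk} with $\theta$ and $\ph$ built from the two sides to produce the isometry intertwining them and thereby get the model in the canonical Hardy-space form. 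If a cleaner statement is desired, I would phrase the conclusion as: the $u_i$ of a Hamburger model, after composing with the isometry of Theorem \ref{lurk}, give a boundary Nevanlinna model, and vice versa, so the two notions coincide.
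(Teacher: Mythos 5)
Your proposal is correct and follows the paper's proof essentially line for line: the reverse direction is immediate from the definition, and the forward direction applies Proposition \ref{derUnit} at the block-diagonal point $\bpm X & \\ & Y \epm$ with the off-diagonal increment and reads off the $(2,1)$ block. The adjoint bookkeeping you flag as a potential obstacle is resolved trivially (as the paper does) because both $X$ and $Y$ lie in $D \subset \RR^d$ and hence are self-adjoint, so $Y_i = Y_i^*$ and $f(Y) = f(Y)^*$; no analytic continuation, domination, or lurking-isometry argument is required.
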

	\begin{proof}
		The reverse implication holds by definition.
	
		Suppose $f$ has a Hamburger model $u.$
		That is, $$Df(X)[H] = \sum_i u_i(X)^*\tensor{I}{H}u_i(X).$$
		So,
		$$Df\bpm
			X & \\
			 & Y
		\epm
		\bbm
			 & X-Y \\
			X-Y  & 
		\ebm = \sum_i u_i\bpm
			X & \\
			 & Y
		\epm^*\tensor{I}{
		\bbm
			 & X_i-Y_i \\
			X_i-Y_i  & 
		\ebm}
		u_i\bpm
			X & \\
			 & Y
		\epm.$$
		
		Via the formula in Proposition \ref{derUnit},
		$$\bbm
			 & f(X)-f(Y) \\
			f(X)-f(Y)  & 
		\ebm =
		\sum_i 
		\bbm
			u_i(X) & \\
			 & u_i(Y)
		\ebm^*
		\tensor{I}{
		\bbm
			 & X_i-Y_i \\
			X_i-Y_i  & 
		\ebm}
		\bbm
			u_i(X) & \\
			 & u_i(Y)
		\ebm.$$
		
		Multiplying on the second slot,
		$$\bbm
			 & f(X)-f(Y) \\
			f(X)-f(Y)  & 
		\ebm = \sum_i\bbm
			 & u_i(X)^*\tensor{I}{X_i-Y_i}u_i(Y) \\
			u_i(Y)^*\tensor{I}{X_i-Y_i}u_i(X) & 
		\ebm.$$
		
		This implies
		$$f(X)-f(Y) = \sum_i u_i(Y)^*\tensor{I}{X_i-Y_i}u_i(Y).$$
		
		Thus,
		$$f(X)-f(Y)^* = \sum_i  u_i(Y)^*\tensor{I}{X_i-Y_i}u_i(Y).$$
	\end{proof}

	The following lemma proves $(4\Rightarrow 1)$ in Theorem \ref{lownerfull}.
	\begin{lemma}
		If a real free analytic function  $f: D \rightarrow \R$ has a L\"owner extension, then $f$ is locally monotone.  
	\end{lemma}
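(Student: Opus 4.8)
The plan is to probe the imaginary part of the L\"owner extension $F\colon\Pi^d\cup D\to\overline{\Pi^1}$ along a complex ray issuing from a real point of $D$ into $\Pi^d$, and to recover $Df(X)[\,\cdot\,]$ in the limit. Fix $(X,H)\in T(D)$ with $X\in D\cap\RR^d_n$ and $H\ge 0$, and for $\delta>0$ set $H_\delta:=H+\delta(I_n,\ldots,I_n)$, so that each component of $H_\delta$ is strictly positive definite. For real $t>0$ the tuple $X+itH_\delta$ has imaginary part $tH_\delta>0$, hence lies in $\Pi^d$, while $X-sH_\delta\in D$ for $|s|$ small since $D$ is relatively open in $\RR^d$. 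Since $F$ is a continuous---hence locally bounded---free function on the open free set $\Pi^d$, it is holomorphic there, so the $\M_n$-valued function $\phi(t):=F(X+itH_\delta)$ is holomorphic for $\RE t>0$ near $0$, extends continuously to the segment $\{\,\RE t=0,\ |t|<r\,\}$, and on that segment satisfies $\phi(is)=F(X-sH_\delta)=f(X-sH_\delta)$.

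The first real step is to upgrade the regularity of $\phi$ at $t=0$; this is exactly where real analyticity of $f$ is used. Expressing $H_\delta$ in the coordinates of an $n$-frame at $X$, the restriction $s\mapsto f(X-sH_\delta)$ is, near $s=0$, a convergent power series, so it continues to a holomorphic $\M_n$-valued function $\Psi$ on a disc about $0\in\C$; put $G(t):=\Psi(-it)$, holomorphic near $0$, agreeing with $\phi$ on $\{\RE t=0\}$. Then $\phi-G$ is holomorphic on the right half-disc, continuous up to its diameter, and vanishes there, so an entrywise one-variable Schwarz reflection across the imaginary axis forces $\phi=G$ on the half-disc. Hence $\phi$ continues holomorphically to a full disc about $0$, with $\phi(0)=f(X)$ and $\phi'(0)=G'(0)=-i\,\Psi'(0)=-i\,\tfrac{d}{ds}f(X-sH_\delta)\big|_{s=0}=i\,Df(X)[H_\delta]$.

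Positivity is then immediate. The matrix $\phi(0)=f(X)$ is self-adjoint, so $\IM\phi(0)=0$; for real $t\in(0,r)$ we have $\phi(t)=F(X+itH_\delta)\in\overline{\Pi^1}$, so $\IM\phi(t)\ge 0$. As $\phi$ is now smooth at $0$, for each vector $v$ the scalar $t\mapsto\ip{(\IM\phi(t))v}{v}$ is smooth, vanishes at $0$, and is nonnegative for $t>0$, so its derivative at $0$ is nonnegative; hence $\IM\phi'(0)\ge 0$. Since $Df(X)[H_\delta]$ is self-adjoint (a limit of difference quotients of self-adjoint matrices), $\IM\!\big(i\,Df(X)[H_\delta]\big)=Df(X)[H_\delta]$, so $Df(X)[H_\delta]\ge 0$. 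Letting $\delta\to 0$ and using linearity of $Df(X)[\,\cdot\,]$ yields $Df(X)[H]=\lim_{\delta\to 0}Df(X)\big[H+\delta(I_n,\ldots,I_n)\big]\ge 0$, i.e.\ local monotonicity.

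The one genuinely delicate point is the boundary regularity invoked above: a priori $F$ is merely continuous up to $D$, so it cannot be differentiated at $X$ directly, and the holomorphic comparison function $G$ furnished by the real analyticity of $f$---together with Schwarz reflection---is what makes $\phi$ differentiable at $0$. An equivalent packaging is to invoke the complex-analytic free extension $\tilde f$ of $f$ (provided by real analyticity together with Corollary \ref{commieCor}), to verify $\tilde f=F$ on a connected component of $\Pi^d$ abutting $X$ via an edge-of-the-wedge argument, and then to differentiate the manifestly holomorphic $\tilde f$; I favor the one-variable reflection because it keeps the entire argument scalar and entrywise.
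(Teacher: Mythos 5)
Your proof is correct and rests on the same basic idea as the paper's: probe the imaginary part of the extension along $X+itH$ for small $t>0$ and differentiate at $t=0$. The paper's own argument is a terse contrapositive---assume $Df(X)[H]\not\geq 0$ for some $H\geq 0$, write the Taylor expansion $f(X+itH)=f(X)+itDf(X)[H]+O(t^2)$, and note that $\IM f(X+itH)=tDf(X)[H]+O(t^2)$ fails to be positive semidefinite for small $t>0$, contradicting that the extension maps into $\overline{\Pi^1}$. Your version is more careful in two respects. First, you replace $H$ by $H_\delta=H+\delta(I,\ldots,I)$ so that $X+itH_\delta$ really does lie in $\Pi^d$; the paper's sketch tacitly takes $H>0$ (or leaves implicit the continuity argument that lets one perturb $H$ to a nearby strictly positive tuple without restoring positivity of $Df(X)[H]$), which you close cleanly with the $\delta\to0$ limit at the end. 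Second, you justify the $C^1$ behavior of $t\mapsto F(X+itH_\delta)$ at the boundary point $t=0$ by matching $F$ against the holomorphic continuation $G$ of $f$ and applying Schwarz reflection, whereas the paper simply invokes real analyticity of $f$ and writes the Taylor expansion directly; your alternative packaging via the complex-analytic free extension and Corollary~\ref{commieCor} is equally valid and perhaps closer in spirit to what the paper intends. Both routes are sound; yours is the more self-contained and airtight rendering of the same argument.
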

	\begin{proof}
		Suppose $f$ has a L\"owner extension and $f$ is not monotone.
		Then, there is a point $X$ and a positive semidefinite $H$ such that $D(X)[H]$ is not positive semidefinite.
		Since $$f(X+itH) = f(X) +  itD(X)[H] + O(t^2),$$
		$$\text{Im } f(X+itH) = tD(X)[H] + O(t^2)$$
		which for small $t\geq 0$ is not positive semidefinite. This is a contradiction.
	\end{proof}

\subsection{The Hamburger model construction} \label{construction}
	We now begin the construction of a Hamburger model.
	The following is a reduction of the Choi-Kraus theorem which gives the
	raw data used in the construction locally.
	\begin{lemma} \label{finitehamburger}
		Suppose	$f: D \rightarrow \RR$ is locally monotone on $D.$
		For any point $X\in D_n$ there are real free functions $u_{ij}$ such that for all $H \in \M_n^d$,
			$$Df(X)[H] = \sum_{i} \sum_{j} u_{ij}(X)^*H_iu_{ij}(X).$$ 
	\end{lemma}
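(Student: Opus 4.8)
The plan is to decompose the derivative into $d$ linear pieces, recognize each as a completely positive map, apply the Choi--Kraus theorem, and then argue that the Kraus operators that occur can be taken polynomial in $X$. Since the G\^ateaux derivative is linear in its direction and $\M_n^d$ is a direct sum of $d$ copies of $\M_n$, write $Df(X)[H] = \sum_{i=1}^d L_i(H_i)$, where $L_i \colon \M_n \to \M_n$ is the linear map sending $K$ to $Df(X)[(0,\dots,K,\dots,0)]$ with $K$ in the $i$-th slot. The first claim is that each $L_i$ is completely positive: since $D$ is a free set it contains $X \otimes I_m$ for every $m$, and Proposition \ref{tenDer} together with linearity of the derivative in its direction identifies the $m$-th ampliation $L_i^m$ with the map $K \mapsto Df(X \otimes I_m)[(0,\dots,K,\dots,0)]$ on $\M_{nm}$; as $f$ is locally monotone and $(0,\dots,K,\dots,0) \geq 0$ whenever $K \geq 0$, this map is positive, so $L_i$ is completely positive and Theorem \ref{choi} yields matrices $V_{ij} \in \M_n$ with $L_i(K) = \sum_j V_{ij}^* K V_{ij}$.

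The substantive step is to realize the $V_{ij}$ as values $u_{ij}(X)$ of free functions. Since conjugating $X$ by any invertible in its commutant fixes $X$, every free function's value at $X$ lies in the algebra $\mathcal{A}_X$ generated by $X_1,\dots,X_d$, so what must be shown is that the Kraus operators can be chosen inside $\mathcal{A}_X$. The extra input is that $f$, being a free function by Corollary \ref{commieCor}, respects similarity: differentiating $f(S(X+tK)S^{-1}) = S f(X+tK) S^{-1}$ at $t = 0$ for invertible $S \in \mathcal{A}_X'$ gives $L_i(S K S^{-1}) = S L_i(K) S^{-1}$, so $L_i$ is $\mathcal{A}_X'$-equivariant. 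To exploit this I would first pass to a reduced dominating point $X^0 \in D$ via Lemma \ref{domReduction}, so that $\mathcal{A}_{X^0} = \bigoplus_\ell \M_{n_\ell}$ and $\mathcal{A}_{X^0}' = \bigoplus_\ell \C I_{n_\ell}$; equivariance then forces the Choi matrix of $L_i$ to be supported on $\bigoplus_\ell \C^{n_\ell} \otimes \C^{n_\ell} \subseteq \C^N \otimes \C^N$, so its rank-one decomposition produces block-diagonal, hence $\mathcal{A}_{X^0}$-valued, Kraus operators, each of which equals $p_{ij}(X^0)$ for a free polynomial $p_{ij}$. Finally, Lemma \ref{dominationDer} together with Propositions \ref{tenDer} and \ref{derUnit} transports the resulting decomposition from $X^0$ back to the original $X$, and the $u_{ij} := p_{ij}$ are the desired free functions.

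The main obstacle is precisely this realization step. Complete positivity alone does not confine the Kraus operators to $\mathcal{A}_X$ (e.g.\ the completely positive map $H \mapsto (\tr H) I$ at $X = 0$ has no Kraus representation by scalar operators), so one genuinely needs the similarity-equivariance of the derivative combined with the reduction to a point with minimal commutant; keeping the reduction-and-transfer bookkeeping honest, via Propositions \ref{tenDer} and \ref{derUnit}, is where the real care is required.
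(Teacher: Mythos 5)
Your first paragraph matches the paper's argument precisely: decompose $Df(X)[H]=\sum_i L_i(H_i)$, identify the ampliations of each $L_i$ with $Df(X\otimes I_m)$ via Proposition \ref{tenDer}, conclude complete positivity from local monotonicity, and apply Choi--Kraus. You also correctly identify the realization step as the crux, and correctly flag the trace map as the example showing complete positivity alone is not enough.

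The gap is in your claimed fix. Commutant-equivariance, $L_i(SKS^{-1}) = SL_i(K)S^{-1}$ for $S \in \mathcal{A}_{X^0}'$, does \emph{not} force the Choi matrix of $L_i$ to be supported on $\bigoplus_\ell \C^{n_\ell}\otimes\C^{n_\ell}$. It only forces $C = \sum_{\ell,m} (P_\ell\otimes P_m)\, C\, (P_\ell\otimes P_m)$, i.e.\ $C$ commutes with the block grading; the off-diagonal compressions $(P_\ell\otimes P_m)C(P_\ell\otimes P_m)$ with $\ell\neq m$ need not vanish. Your own trace-map example, moved to a reduced point, shows this: take $\mathcal{A}_{X^0} = \C\oplus\C\subset\M_2$, so $\mathcal{A}_{X^0}'=\C\oplus\C$ as well, and let $L(K)=(\tr K)I$. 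This map satisfies $L(SKS^{-1})=SL(K)S^{-1}$ for every $S=\diag(\lambda_1,\lambda_2)$, yet its Choi matrix is $\sum_a E_{aa}\otimes I = I_4$, whose range is all of $\C^2\otimes\C^2$, so no choice of Kraus operators is block-diagonal. Reduction followed by equivariance therefore does not close the gap.

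What rules out the trace map, and what the paper actually uses, is the \emph{direct-sum} property of $Df(X^0)$ (not merely similarity-equivariance), combined with a positivity trick. Since $f$ respects direct sums and $X^0=\bigoplus_\ell X^0_\ell$, the derivative in the block-diagonal direction $P^\ell_i$ is supported on block $\ell$: $Df(X^0)[P^\ell_i]=P^\ell\,Df(X^0)[P^\ell_i]\,P^\ell$. Substituting the Choi--Kraus form and reading off the $(m,m)$ block, one gets $\sum_j P^m V_{ij}\ad P^\ell V_{ij} P^m = 0$ for $m\neq \ell$, which is a sum of nonnegative terms $(P^\ell V_{ij}P^m)\ad(P^\ell V_{ij}P^m)$; each must therefore vanish, so every $V_{ij}$ is block-diagonal and lies in $\mathcal{A}_{X^0}=\bigoplus_\ell M_{n_\ell}(\C)$. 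Note that the trace map does \emph{not} respect direct sums ($L(P^1)=I$, not $P^1 L(P^1) P^1$), which is exactly why it evades the paper's argument but not yours. To repair your proof, replace ``equivariance forces the Choi matrix to be supported on the block-diagonal subspace'' with the direct-sum identity on the directions $P^\ell_i$, followed by the ``sum of squares vanishes, hence each square vanishes'' argument.
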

	\begin{proof}
		Suppose $f$ is locally monotone. Let $X \in D\cap \M^d_n.$ Without loss of generality, we let $X = \bigoplus X_l$ where $X$ generates the algebra $\bigoplus M_{n_l}(\mathbb{C})$ where $\sum n_l = n.$
		(This follows from Lemmas \ref{dominationDer} and \ref{domReduction}.)
		Note $Df(X): \M_n^d \rightarrow \M_n$ is completely positive in each coordinate
		since the extension of
		$Df(X)$ to $\M_{n}^d \otimes \M_k$ via Formula \eqref{composten}
		is given by $Df(X\otimes I_n)$ by Proposition \ref{tenDer} which is positive by the assumption of local monotonicity.
		By the Choi-Kraus theorem\cite{bha07}, $$Df(X)[H] = \sum_{i} \sum_{j} V_{ij}^*H_iV_{ij}.$$
		
		We now show that the $V_{ij}$ are in the algebra generated by $X.$ That is, they are free polynomial functions of $X.$ 
		Let $P^l$ be the projection onto the $l$-th component of $X.$
		Let $P^l_i$ be a tuple that equals $P^l$ on the $i$-th coordinate and $0$ elsewhere.
		Consider $Df(X)[P^l_i].$
			$$Df(X)[P^l_i] = \sum_{j} V_{ij}^*P^lV_{ij}.$$
		Block decompose $$V_{ij} = \sum_{l,m} P^lV_{ij}P^m.$$
		So,
			$$Df(X)[P^l_i] = \sum_{j} \sum_m \sum_n P^mV_{ij}^*P^lV_{ij}P^n.$$
		So, in the block decomposition of $Df(X)[P^l_i]$ the $(m,m)$ entry is $$ \sum_{j} P^mV_{ij}^*P^lV_{ij}P^m.$$
		However, since $Df(X)[H]$ is a free function,
			$$Df(X)[P^l_i] = \sum_{j} P^lV_{ij}^*P^lV_{ij}P^l$$
		So, if $l \neq m$ the $(m,m)$ entry is $0.$ That is, $P^mV_{ij}^*P^lV_{ij}P^m = 0.$ Thus, if $l \neq m$
		$P^lV_{ij}P^m = 0.$ This implies that 
			$$V_{ij} = \sum_l P^lV_{ij}P^l.$$
		
		Since $X$ generates $\bigoplus M_{n_l}(\mathbb{C}),$ $V_{ij}$ is in the algebra generated by $X.$
		Thus, each $V_{ij}$ is in the algebra generated by $X$ so there are free polynomials $u_{ij}$ such that $u_{ij}(X) = V_{ij}.$
	\end{proof}
	
	\begin{definition}
		A \emph{global monomial basis vector} for $D$ is a free function $m$ on $D$ given by the formula $m_X = (c_IX^I)_I$ such that $\|m_X\|$
		is locally bounded on $D$ and each $c_I > 0.$
	\end{definition}
	
	We now use free coefficient Hardy space methods to establish local uniqueness of the Hamburger model. We refine the
	the raw data obtained in Lemma \ref{finitehamburger} into a canonical object.
	\begin{lemma} \label{unique}
		Let $m$ be a global monomial basis vector.
		If $f: D \rightarrow \RR$ is locally monotone on $D,$
		for any matrix tuple $X \in D_n$ there are unique finite rank operators $A^i_{X} \in \mathcal B(H_m^2) \geq 0$ such that if at a tuple of operators $B_i \in \B(H_m^2)$
			$$Df(X)[H] = \sum_{i} m_X^* \tensor{B_i }{ H_i}m_X$$ 
		for all $H \in \M_n^d$, then $P^m_XB_iP^m_X = A^i_X.$ 
	\end{lemma}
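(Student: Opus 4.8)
The plan is to separate the claim into an existence statement — that at least one tuple $(B_i)$ of operators on $H^2_m$ realizes the stated formula for $Df(X)[\,\cdot\,]$ — and a uniqueness statement — that the compression $P^m_X B_i P^m_X$ does not depend on which such tuple is chosen. Granting both, one simply \emph{defines} $A^i_X$ to be this common compression: its finite rank is automatic because $\vecspace^m_X$ is finite dimensional (at most $n^2$-dimensional, being the detensorization of $\mathrm{span}\{\tensor{I}{U}m_Xc\}$ as in Definition \ref{localvec}), and its positivity follows because the particular tuple furnished by the existence step is itself positive.

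For existence, begin with Lemma \ref{finitehamburger}, which, since $f$ is locally monotone, supplies free polynomials $u_{ij}$ with
\[
Df(X)[H]=\sum_i\sum_j u_{ij}(X)^*H_i\,u_{ij}(X),
\]
the inner sum being finite. Because $m$ is a global monomial basis vector, every coefficient $c_I$ is strictly positive, so each free polynomial $u_{ij}(Y)=\sum_I a_I Y^I$ is represented by the finitely supported — hence $\ell^2$ — vector $v_{ij}=\big(\overline{a_I}/c_I\big)_I\in H^2_m$, in the sense that $u_{ij}(X)^*H_i\,u_{ij}(X)=m_X^*\tensor{v_{ij}v_{ij}^*}{H_i}m_X$ for all $X$ and all $H_i$. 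Summing, $Df(X)[H]=\sum_i m_X^*\tensor{B_i}{H_i}m_X$ with $B_i:=\sum_j v_{ij}v_{ij}^*$, a positive finite-rank operator on $H^2_m$.

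The substance of the lemma is uniqueness, and this is the step I expect to carry the real weight. Suppose $(B_i)$ and $(B_i')$ both realize $Df(X)[\,\cdot\,]$ in the stated form, and put $C_i:=B_i-B_i'$, so that $\sum_i m_X^*\tensor{C_i}{H_i}m_X=0$ for every $H\in\M_n^d$. Taking $H$ supported in a single coordinate isolates $m_X^*\tensor{C_i}{H_i}m_X=0$ for every $i$ and every $H_i\in\M_n$. The key algebraic observation is the intertwining identity $(\tensor{I}{V})^*\tensor{C_i}{H_i}(\tensor{I}{U})=\tensor{C_i}{V^*H_iU}$: as $H_i$ ranges over $\M_n$ so does $V^*H_iU$ for fixed unitaries $U,V$, so the relation upgrades to
\[
\big\langle\tensor{C_i}{H_i}\,\tensor{I}{U}m_Xc,\ \tensor{I}{V}m_Xc'\big\rangle=0
\]
for all unitaries $U,V$, all $c,c'\in\C^n$, and all $H_i\in\M_n$. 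By Definition \ref{localvec} the vectors $\tensor{I}{U}m_Xc$ span $\tensor{\vecspace^m_X}{\C^n}$, so, specializing $H_i$ to the matrix units $E_{pq}$ and the vectors to $f\otimes e_q$ and $g\otimes e_p$ with $f,g\in\vecspace^m_X$, one extracts $\langle C_if,g\rangle=0$; that is, $P^m_X C_i P^m_X=0$, whence $P^m_X B_i P^m_X=P^m_X B_i' P^m_X$. Setting $A^i_X$ equal to this value completes the argument, uniqueness of $A^i_X$ being then immediate from the existence of at least one admissible tuple.

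One minor preliminary should be isolated first: $m_X\in\tensor{H^2_m}{\M_n}$, since the part of $m_X$ in $(H^2_m)^\perp\otimes\M_n$ vanishes because every vector orthogonal to $H^2_m$ annihilates all evaluations. Thus compressing by $P^m_X$ and viewing the $B_i$ as operators on $H^2_m$ (extended by zero) is harmless, and the tensor identities above are literally valid.
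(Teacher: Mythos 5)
Your proposal is correct and follows essentially the same route as the paper: existence via Lemma~\ref{finitehamburger} (converting the Choi--Kraus polynomials $u_{ij}$ into $H^2_m$-vectors and assembling $B_i=\sum_j v_{ij}v_{ij}^*$), and uniqueness by exploiting the freedom of $H_i\in\M_n$ to deduce that the difference of two admissible tuples vanishes on $\vecspace^m_X$. The only cosmetic difference is that the paper tests against $H_i=U_1^*U_2$ with $U_1,U_2$ unitary, whereas you test against matrix units $E_{pq}$; both work because $H$ ranges over all of $\M_n^d$, and both reduce to $\ip{C_i u}{v}=0$ for $u,v\in\vecspace^m_X$.
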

	\begin{proof}
		Note by Lemma \ref{finitehamburger}, there are polynomials $u_{ij}$ such that
			$$Df(X)[H]= \sum_{i} \sum_{j} u_{ij}(X)^*H_iu_{ij}(X).$$
		Define $u_i$ to be the function given by $(u_{ij}(X))_{j}$ as a column vector.
		So,
			$$Df(X)[H]= \sum_{i}  u_{i}(X)^* \tensor{I}{H_i}u_{i}(X).$$
		Note, since the entries of $u_i$ are polynomials, there are bounded finite rank operators $K_i$ so that
		$u_i(X) = \tensor{K_i}{I}m_X$.
		So
			$$Df(X)[H]= \sum_{i}  m_X^* \tensor{K_i^*K_i}{H_i}m_X.$$
		Define $A^i_X = P^m_XK_i^*K_iP^m_X.$
			$$Df(X)[H] = \sum_{i} m_X^* \tensor{A^i_X}{H_i}m_X.$$
		
		Suppose, $B_i$ satisfies $$Df(X)[H] = \sum_{i} m_X^* \tensor{B_i}{H_i} m_X$$
		and $P^m_XB_iP^m_X=B_i.$
		So if $U_1, U_2 \in \mathcal U_n$
			$$0 = m_X^* \tensor{A^i_X - B_i}{U_1^*U_2}m_X = m_X^* \tensor{I}{U_1^*}\tensor{A^i_X - B_i}{I}\tensor{I}{U_2}m_X$$
		So, $\ip{(A^i_X-B_i)u}{v}=0$ for any $u, v \in \vecspace^m_X $ and is thus $0.$
	\end{proof}
	
	By patching these together, we obtain the Hamburger model.
	\begin{lemma} 
		If $f: D \rightarrow \RR$ is locally monotone on $D,$
		then $f$ has a Hamburger model.
	\end{lemma}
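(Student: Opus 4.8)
The plan is to assemble the canonical local operators $A^i_X$ furnished by Lemma~\ref{unique} into honest global objects by gluing them along the directed family of finite-dimensional subspaces $\vecspace^m_X \subset H^2_m$, which is dense by Proposition~\ref{denseSpanGen}. The backbone of the argument is a coherence identity saying that the $A^i_X$ are mutually consistent compressions of one another; once that is in place, the Hamburger model is essentially read off.

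\textbf{Step 1 (coherence under direct sums).} Fix $X \in D\cap\M^d_n$ and $Z \in D\cap\M^d_k$, so that $Y := X\oplus Z \in D$ and $\vecspace^m_X \subseteq \vecspace^m_{Y}$. I would first show $P^m_X A^i_{Y} P^m_X = A^i_X$. To do this, write $m_{Y}$ in the block form $\bpm m_X & 0\\ 0 & m_Z\epm$ relative to $\C^{n}\oplus\C^{k}$, substitute the tangent vector $H\oplus 0$ (for an arbitrary $H\in\M^d_n$) into the identity $Df(Y)[\,\cdot\,]=\sum_i m_{Y}^*\tensor{A^i_{Y}}{\cdot_i}m_{Y}$ of Lemma~\ref{unique}, and use that $f$ respects direct sums, so $Df(Y)[H\oplus 0] = Df(X)[H]\oplus 0$. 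Comparing the upper-left blocks yields $Df(X)[H] = \sum_i m_X^*\tensor{A^i_{Y}}{H_i}m_X$ for every $H$, and the uniqueness clause of Lemma~\ref{unique} then forces $P^m_X A^i_{Y} P^m_X = A^i_X$.

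\textbf{Step 2 (the global form and the model).} Let $\mathcal D_0 = \bigcup_{X\in D}\vecspace^m_X$, a dense subspace of $H^2_m$ which is directed since $\vecspace^m_X + \vecspace^m_Z \subseteq \vecspace^m_{X\oplus Z}$. For each $i$ I would define a form on $\mathcal D_0$ by $\langle v,w\rangle_i := \langle A^i_X v,w\rangle$ whenever $v,w\in\vecspace^m_X$; Step~1 (applied at $X\oplus X'$, together with $\langle A^i_{X\oplus X'}v,w\rangle = \langle P^m_X A^i_{X\oplus X'}P^m_X v,w\rangle$ when $v,w\in\vecspace^m_X$) makes this independent of the chosen $X$, and positivity of each $A^i_X$ makes it positive semidefinite. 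Let $R_i\colon\mathcal D_0\to\hilbert$ be the associated densely defined map into (a copy of the completion of $\mathcal D_0$ modulo the null space inside) $\hilbert$, so $\langle R_iv,R_iw\rangle=\langle v,w\rangle_i$, and set $u_i(X):=\tensor{R_i}{I_n}m_X$; this is defined because $\operatorname{ran} m_X\subseteq\vecspace^m_X\otimes\C^n\subseteq\operatorname{dom}(R_i\otimes I_n)$, and it is finite rank. Then $u_i$ is a real free function because $m$ is and $\tensor{R_i}{I}$ commutes with unitary conjugation in the intrinsic slot and with direct sums, and it is locally bounded since $\|u_i(X)c\|^2 = \langle Df(X)[(0,\dots,I,\dots,0)]c,c\rangle$. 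Finally, because $R_i^*R_i$ agrees with $A^i_X$ on $\vecspace^m_X\supseteq\operatorname{ran} m_X$, one obtains for all $(X,H)\in T(D)$ the identity $\sum_i u_i(X)^*\tensor{I}{H_i}u_i(X) = \sum_i m_X^*\tensor{A^i_X}{H_i}m_X = Df(X)[H]$, so $(u_i)_i$ is a Hamburger model. (One could alternatively route the globalization through the lurking-isometry argument for linear forms, Theorem~\ref{lurk}, using the same coherence as the consistency input.)

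\textbf{Where the difficulty lies.} The only genuinely substantive point is Step~1: that the locally canonical operators produced by Lemma~\ref{unique} compress consistently to one another, which is exactly what makes the gluing coherent; after that, everything reduces to bookkeeping given Lemma~\ref{unique} and Proposition~\ref{denseSpanGen}. A secondary technical nuisance is that $R_i$ need not be bounded when $D$ is unbounded, so one cannot simply write $u_i(X)=\tensor{(A^i)^{1/2}}{I}m_X$ for a single bounded operator $A^i$; the resolution is that $u_i$ is nonetheless a well-defined, locally bounded, hence analytic, free function since it factors through the finite-rank free function $m$.
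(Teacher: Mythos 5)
Your proof is correct and follows essentially the same construction as the paper: take the direct limit of the semidefinite sesquilinear forms $q^i_X(u,v)=\langle A^i_Xu,v\rangle$ over the directed family $\vecspace^m_X$, complete the quotient, and read off the model via $u_i = \tensor{T_i}{I}m_X$. The paper asserts this direct limit exists without comment; your Step 1, verifying the compression-consistency $P^m_XA^i_{X\oplus Z}P^m_X = A^i_X$ by respecting direct sums and invoking the uniqueness clause of Lemma~\ref{unique}, is exactly the compatibility that the paper's direct limit tacitly requires, so you have supplied the one detail the paper leaves implicit.
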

	\begin{proof}
		By Lemma \ref{unique}, for each $X$ there is a unique $A^i_X$ such that
		$Df(X)[H] = \sum_i m_X^*\tensor{A^i_X}{H_i} m_X$ and $P^m_XA^i_XP^m_X = A^i_X.$
		
		Let $q^i(u,v)$ be the direct limit of the semidefinite sesquilinear forms
		$q_X^i(u,v) = \ip{A^i_X u}{v}$ defined on $\vecspace = \bigcup_{X\in D} \vecspace^m_X.$
		Let $K_i$ be the Hilbert space formed by completing the quotient $\vecspace / \text{ker } q^i(\cdot,\cdot).$
		Let $T_i: \vecspace \rightarrow K_i$ be the inclusion map.
		Define $u_i = \tensor{T_i}{I} m_X.$
		Now $u_i$ form a Hamburger model for $f.$
	\end{proof}

\subsection{The localizing matrix construction of the Hamburger model} \label{hankel}
	Let $f(X) =\sum_I c_IX^I$ be a power series.
	\emph{The $x_k$-localizing matrix of coefficients}
	is the infinite matrix with rows and columns indexed by monomials $(c_{I^*x_kJ})_{I,J}.$
	In the section, we will show that if $f$ is monotone, then the $x_k$-localizing matrix of coefficients must
	be positive semidefinite.
	This application mirrors the use of classical Hankel matrices
	in the study of the Hamburger moment problem \cite{nev22, pel02}. Localizing matrices have been
	used to study multivariate moment problems\cite{curto}, and more recently to study noncommutative convex hulls\cite{freelasserre}.
	
	The following gives a condition for a free power series to be uniformly and absolutely convergent.
	\begin{lemma}\label{powerSeriesConverge}
		Let $\epsilon >0.$
		Suppose a series in $d$ noncommuting variables $\sum_I c_IX^I$ is convergent for all $\|X\|<d+\epsilon.$
		Then $\sum_I c_IX^I$ is absolutely and uniformly convergent for all $\|X\|<1.$
		Furthermore, there is an $N$ such that if $|I| \geq N,$
			$$|c_I| \leq \left(d + \frac{\epsilon}{2}\right)^{-|I|}.$$
	\end{lemma}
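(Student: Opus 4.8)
The plan is to reduce the lemma to the coefficient estimate $|c_I|\le(d+\tfrac\epsilon2)^{-|I|}$ for all sufficiently large $|I|$, and to obtain that estimate by evaluating the series on a scaled copy of the creation operators underlying the free coefficient Hardy space. Write $\rho=d+\tfrac\epsilon2$, so $0<\rho<d+\epsilon$. Index the standard orthonormal basis of $H^2_d=\ell^2(\mathcal I)$ as $\{e_w\}_{w\in\mathcal I}$, let $\Omega$ be the basis vector attached to the empty word, and let $L_1,\dots,L_d\in\mathcal B(H^2_d)$ be the left creation operators $L_ie_w=e_{x_iw}$. Each $L_i$ is an isometry, so the tuple $\rho L=(\rho L_1,\dots,\rho L_d)$ satisfies $\|\rho L\|=\rho<d+\epsilon$, and the recursion $X^{x_kw}=X_kX^w$ gives, by a one-line computation, $(\rho L)^I\Omega=\rho^{|I|}e_I$ for every word $I$. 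Granting for the moment that the series converges in operator norm at $\rho L$ to some $T\in\mathcal B(H^2_d)$ (discussed below), we obtain
\[
T\Omega=\sum_{I\in\mathcal I}c_I\rho^{|I|}e_I\in H^2_d,\qquad
\sum_{I\in\mathcal I}|c_I|^2\rho^{2|I|}=\|T\Omega\|^2\le\|T\|^2<\infty .
\]
In particular $|c_I|\rho^{|I|}\to0$, so there is an $N$ with $|c_I|\le\rho^{-|I|}=(d+\tfrac\epsilon2)^{-|I|}$ whenever $|I|\ge N$, which is the asserted estimate.

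Absolute and uniform convergence on $\{\|X\|<1\}$ then follows from a geometric estimate. For a tuple $X$ with $\|X\|\le 1$ we have $\|X^I\|\le\|X\|^{|I|}\le1$, and since there are exactly $d^k$ words of length $k$,
\[
\sum_{|I|\ge N}|c_I|\,\|X^I\|\le\sum_{|I|\ge N}|c_I|\le\sum_{k\ge N}d^k\rho^{-k}=\sum_{k\ge N}\Bigl(\tfrac{d}{d+\epsilon/2}\Bigr)^{\!k}<\infty ,
\]
a bound independent of $X$; adding the free polynomial consisting of the finitely many terms of degree $<N$, we conclude that $\sum_Ic_IX^I$ converges absolutely and uniformly on $\{\|X\|\le1\}$, in particular on $\{\|X\|<1\}$.

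The delicate point — and the main obstacle — is the passage from the hypothesis, which is phrased for matrix tuples, to the evaluation at the operator tuple $\rho L$ on the infinite-dimensional space $H^2_d$: this is the statement that the matricial and operatorial radii of convergence of a free power series coincide, which is standard (see \cite{vvw12}). The mechanism is made transparent by truncation: with $P_n$ the projection of $H^2_d$ onto $\operatorname{span}\{e_w:|w|\le n\}$, the compressions $(\rho P_nL_1P_n,\dots,\rho P_nL_dP_n)$ are finite matrix tuples of norm $\le\rho<d+\epsilon$, the series converges in norm at each of them, $(\rho P_nLP_n)^I\Omega=\rho^{|I|}e_I$ for $|I|\le n$ and vanishes for $|I|>n$, and the only extra input needed is the uniform boundedness of the resulting operator norms. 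Once that is in place, the computation of the first paragraph goes through verbatim and the lemma follows.
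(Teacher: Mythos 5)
Your overall architecture matches the paper's: obtain the coefficient estimate $|c_I| \leq (d + \tfrac{\epsilon}{2})^{-|I|}$ for large $|I|$, then deduce absolute uniform convergence on $\{\|X\| \leq 1\}$ from the geometric bound $\sum_{k \geq N} d^k (d + \tfrac{\epsilon}{2})^{-k} < \infty$ using the count of $d^k$ words of length $k$. That second half of your argument is exactly right, and is in fact spelled out more carefully than the paper, which treats it as immediate.

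Where you diverge, and where a gap opens, is in how you obtain the coefficient bound. You propose to evaluate the series at the scaled creation operators $\rho L$ on the Fock space $H^2_d$, or at the compressions $\rho P_n L P_n$, and read off $\sum_I |c_I|^2 \rho^{2|I|} < \infty$ from the $\ell^2$-norm of $T\Omega$. As you yourself flag, this requires either the nontrivial fact that pointwise matricial convergence forces operatorial convergence at the infinite-dimensional tuple $\rho L$, or, in the compression version, the uniform boundedness of $\|T_n\|$. You call that "the only extra input needed," cite \cite{vvw12}, and move on; but that extra input is precisely what a self-contained argument must supply. The hypothesis gives only pointwise convergence at each finite matrix tuple, and nothing in your compression sketch controls $\|T_n\|$ uniformly in $n$. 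So the mechanism you offer does not actually close the gap, and what remains is a real hole, not a routine technicality.

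The paper's proof renders the whole question moot with a far more elementary device: evaluate the series at the scalar tuple $(d+\tfrac{\epsilon}{2}, \ldots, d+\tfrac{\epsilon}{2}) \in \M_1^d$, which has norm $d + \tfrac{\epsilon}{2} < d + \epsilon$. At this point $X^I = (d+\tfrac{\epsilon}{2})^{|I|}$ for every word $I$, so convergence forces the terms $c_I (d+\tfrac{\epsilon}{2})^{|I|}$ to tend to zero, hence to be bounded by $1$ for $|I|$ large, which is the desired estimate in one line, with no Fock space, no compressions, and no infinite-dimensional evaluations. Your $\ell^2$ conclusion is in principle stronger than the paper's termwise bound, but the lemma needs only the weaker version, and the step you outsource is exactly the step the scalar-tuple trick shows you never needed.
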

	\begin{proof}
		Note that
			$$\lim_{n\rightarrow \infty} \max_{|I| = n} \|c_IX^I\| = 0$$
		for all $\|X\|<d+\epsilon.$
		Substituting in the tuple $(d+\epsilon/2, \ldots, d+\epsilon/2)$ gives that
			$$\lim_{n\rightarrow \infty} \max_{|I| = n}  |c_I| (d+\epsilon/2)^{|I|} = 0$$
		which implies that for large $n,$ $|c_I|(d+\epsilon/2)^{|I|} \leq 1$ which implies the claim.
	\end{proof}
	
	We will now establish that for power series that converge on large enough sets, the $x_k$-localizing matrices are compact,
	which will be useful in establishing formulas for the derivative of a real free power series.
	\begin{lemma}\label{heltonHankelXCompact}
		If $f(X) =\sum_I c_IX^I$ is a real analytic locally monotone free function for $\|X\|<d+\epsilon$,
		then for each $k,$ the $x_k$-localizing matrix of coefficients $(c_{I^*x_kJ})_{I,J}$
		is compact.
	\end{lemma}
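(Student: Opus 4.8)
The plan is to show directly that the $x_k$-localizing matrix of coefficients is a Hilbert--Schmidt operator on $H^2_d = l^2(\mathcal I)$; compactness then follows since every Hilbert--Schmidt operator is compact. First I would apply Lemma \ref{powerSeriesConverge}: since $\sum_I c_I X^I$ converges for all $\|X\| < d + \epsilon$, there is an $N$ such that $|c_L| \le (d + \tfrac{\epsilon}{2})^{-|L|}$ for every word $L$ with $|L| \ge N$. Write $\rho := d + \tfrac{\epsilon}{2}$. The key elementary fact is that $\rho^2 - d = d(d-1) + d\epsilon + \tfrac{\epsilon^2}{4} > 0$ (as $d \ge 1$), so that $d \rho^{-2} < 1$; this strict inequality is exactly what makes the geometric decay of the coefficients beat the growth $d^n$ of the number of words of length $n$.

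Next, denote the $x_k$-localizing matrix of coefficients by $T_k = (c_{I^* x_k J})_{I, J \in \mathcal I}$. Its $(I,J)$-entry is the coefficient of the word $I^* x_k J$, which has length $|I| + |J| + 1$, so the decay estimate from Lemma \ref{powerSeriesConverge} gives $|c_{I^* x_k J}|^2 \le \rho^{-2(|I| + |J| + 1)}$ for every pair $(I,J)$ with $|I| + |J| + 1 \ge N$. Since there are only finitely many pairs $(I,J)$ with $|I| + |J| + 1 < N$, their contribution to $\sum_{I,J}|c_{I^* x_k J}|^2$ is some finite constant $C$, and hence
\[
\sum_{I, J \in \mathcal I} |c_{I^* x_k J}|^2 \;\le\; C + \rho^{-2}\Big(\sum_{I \in \mathcal I} \rho^{-2|I|}\Big)^2 \;=\; C + \rho^{-2}\Big(\sum_{n = 0}^{\infty} (d\rho^{-2})^n\Big)^2 \;<\; \infty,
\]
where I used that the number of words of length $n$ is $d^n$ and that $d\rho^{-2} < 1$. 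Thus $T_k$ is Hilbert--Schmidt, and therefore compact.

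The argument is essentially a routine estimate, and I do not expect a genuine obstacle; the only step needing a moment's care is the bookkeeping in the final display, i.e.\ verifying that the double sum of the geometric bound converges, which is precisely the strict inequality $d < \rho^2$. I would also note that local monotonicity is not used in this lemma --- only the radius of convergence of the power series matters --- although it is of course indispensable for the positive-semidefiniteness statement that this compactness lemma is meant to support.
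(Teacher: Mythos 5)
Your proof is correct, and it takes a genuinely different route from the paper's. The paper shows compactness by exhibiting $(c_{I^*x_kJ})_{I,J}$ as a norm-limit of the finite-rank truncations $\sum_{n \leq N}\sum_{|I^*x_kJ| = n} c_{I^*x_kJ} E_{I,J}$, estimating the operator norm of the tail by the $\ell^1$-sum of its entries and using the decay from Lemma \ref{powerSeriesConverge}. You instead bound the Hilbert--Schmidt norm $\sum_{I,J}|c_{I^*x_kJ}|^2$ directly, which is cleaner in one concrete respect: the double sum factors as a product over $I$ and over $J$, so you never need to count pairs $(I,J)$ with $|I|+|J|+1 = n$. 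The paper's estimate at that step uses the bound $\#\{(I,J) : |I^*x_kJ| = n\} \leq d^n$, which is not literally correct (the count is $nd^{n-1}$, which exceeds $d^n$ for $n > d$), though the geometric decay easily absorbs the extra polynomial factor and the conclusion stands; your factorization sidesteps the issue entirely. Your route also yields the formally stronger conclusion that each localizing matrix is Hilbert--Schmidt, not merely compact. Finally, your observation that the hypothesis of local monotonicity plays no role in this lemma --- only the radius of convergence $d + \epsilon$ matters --- is correct, and matches what the paper's own proof actually uses.
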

	\begin{proof}
		The compactness of each $(c_{I^*x_kJ})_{I,J}$ follows from the decay of the entries given in Lemma \ref{powerSeriesConverge}.
		That is, if $E_{I,J}$ is the infinite matrix with entry $1$ in the $(I, J)$-th slot and zero elsewhere, then
			$$(c_{I^*x_kJ})_{I,J} = \sum^{\infty}_{n=1}\sum_{|Ix_kJ|=n} c_{Ix^kJ}E_{I,J}$$
		is a convergent formula in the norm topology since it is Cauchy via the estimate
		(relying on the combinatorial observation that the total number of words of
		length $n$ in $d$ letters is $d^n$ and the estimate in Lemma \ref{powerSeriesConverge}.)
			\begin{align*}
			\|\sum^N_{n=M} \sum_{|Ix_kJ|=n}  c_{Ix^kJ}E_{I,J}\|
			&\leq \sum^N_{n=M} \sum_{|Ix_kJ|=n} |c_{Ix^kJ}|\\
			&= \sum^N_{n=M} \sum_{|Ix_kJ|=n} (d+\epsilon/2)^{-|Ix_kJ|}\\
			&\leq \sum^N_{n=M} \sum_{|Ix_kJ|=n} (d+\epsilon/2)^{-n}\\
			&\leq \sum^N_{n=M} d^{n} (d+\epsilon/2)^{-n}\\
			&= \sum^N_{n=M}  (1+\frac{\epsilon}{2d})^{-n}\\
			&\leq  \sum^\infty_{n=M}  (1+\frac{\epsilon}{2d})^{-n} \\
			&=  \frac{(1+\frac{\epsilon}{2d})^{-M}}{\frac{\epsilon}{2d}} \rightarrow 0.
			\end{align*}
		Thus,
		since $(c_{I^*x_kJ})_{I,J}$ is well-approximated by finite rank operators,
		$(c_{I^*x_kJ})_{I,J}$ is compact \cite[Theorem 4.4]{con97}.
	\end{proof}
	
	We will need the following elementary fact about the derivative.
	\begin{proposition}\label{derivFormula}
		Let $\epsilon >0.$
		Suppose a series in $d$ noncommuting variables $\sum_I c_IX^I$ is convergent for all $\|X\|<d+\epsilon.$
		The derivative of $f$ at $X \in D_n$ in the direction $H \in \M_n^d$ is, for $\|X\|\leq \frac{1}{d}$, given by the formula
			$$Df(X)[H] = \sum_k m_X^* \tensor{(c_{I^*x_kJ})_{I,J}}{H_k} m_X.$$
	\end{proposition}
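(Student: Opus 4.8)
The plan is to differentiate $f$ term by term, justify commuting the differentiation with the sum over words, and then reindex the result so that it matches the quadratic form in $m_X$ on the right-hand side.

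First, for a single word $w = x_{i_1}\cdots x_{i_m}$ the noncommutative Leibniz rule gives
$$D(X^w)[H] \;=\; \sum_{\ell=1}^{m} X^{w_{<\ell}}\, H_{i_\ell}\, X^{w_{>\ell}},$$
where $w_{<\ell}$ and $w_{>\ell}$ denote the prefix $x_{i_1}\cdots x_{i_{\ell-1}}$ and the suffix $x_{i_{\ell+1}}\cdots x_{i_m}$; this is just the linear-in-$t$ term of $(X+tH)^w$. To pass to $Df$ I would use Lemma \ref{powerSeriesConverge}: the coefficient bound $|c_I|\le(d+\tfrac{\epsilon}{2})^{-|I|}$ together with the count of $d^n$ words of length $n$ dominates both the series $\sum_I c_I X^I$ and the difference quotients $\tfrac1t\big((X+tH)^I-X^I\big)$ (for $\|X\|\le\tfrac1d$ and $|t|$ small) by a convergent geometric series of ratio $d\|X\|/(d+\tfrac{\epsilon}{2})<1$. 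Hence one may differentiate termwise, and
$$Df(X)[H] \;=\; \sum_I c_I\, D(X^I)[H] \;=\; \sum_{w} c_w \sum_{\ell=1}^{|w|} X^{w_{<\ell}}\, H_{w_\ell}\, X^{w_{>\ell}},$$
a double series which, by the same estimate, converges absolutely.

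Next I would reindex. A pair (word $w$, position $\ell\in\{1,\dots,|w|\}$) corresponds bijectively to a triple: a letter index $k$ and two words $I,J$ with $w = I^* x_k J$, $I^* = w_{<\ell}$, $x_k$ the $\ell$-th letter of $w$, and $J = w_{>\ell}$. Absolute convergence makes this rearrangement legitimate, so the double series equals $\sum_k\sum_{I,J} c_{I^* x_k J}\, X^{I^*} H_k X^J$. Since $X\in\RR^d$, each $X_i=X_i^*$, so $(X^I)^* = X^{I^*}$; unwinding the vertical-tensor notation with $m_X=(X^I)_{I\in\mathcal I}$ and $(c_{I^* x_k J})_{I,J}$ viewed as an operator on $\ell^2(\mathcal I)$, this is exactly
$$\sum_k m_X^*\, \tensor{(c_{I^* x_k J})_{I,J}}{H_k}\, m_X,$$
which is the asserted identity. (That the localizing matrix is a bounded operator — say by a Schur test using the same decay of its entries — makes the quadratic form on the right well-defined; equivalently, it \emph{is} literally the absolutely convergent triple series just obtained, so one need not worry whether $m_X$ lies in $\ell^2$ when $\|X\|=\tfrac1d$.)

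The real work here is analytic, not combinatorial: everything rests on the two interchanges — differentiating the power series term by term, and rearranging the unordered triple sum — and these are precisely what the quantitative decay in Lemma \ref{powerSeriesConverge} is invoked to license (note $\tfrac1d < 1 < 1 + \tfrac{\epsilon}{2d}$, so the relevant geometric ratios stay below $1$). The algebra — matching (word, position) pairs with the row and column indices of $(c_{I^* x_k J})_{I,J}$, together with the identity $(X^I)^* = X^{I^*}$ — is then routine.
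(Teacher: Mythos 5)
Your proof is correct and its core is the same as the paper's: apply the noncommutative Leibniz rule to each monomial $X^w$, reindex the resulting (word, position) pairs as triples $(k,I,J)$ with $w=I^{*}x_kJ$, and sum. The only real difference is the bridging step: the paper invokes the compactness of $(c_{I^*x_kJ})_{I,J}$ (from Lemma~\ref{heltonHankelXCompact}) to reduce to the monomial identity, whereas you justify the same interchanges directly by absolute convergence of the triple series using the coefficient decay from Lemma~\ref{powerSeriesConverge} — a more elementary and, I'd say, more transparent argument. You also make explicit a hypothesis the paper leaves implicit: the passage from $(X^I)^{*}$ to $X^{I^{*}}$ requires $X$ self-adjoint, which is consistent with $D$ being a real free domain in the surrounding L\"owner-theorem context but is worth flagging.
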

	\begin{proof}
		By Lemma \ref{heltonHankelXCompact} $(c_{I^*x_kJ})_{I,J}$ is compact and thus it will be sufficient to show
		that, for the functions $g_K(X) = X^K,$ $$Dg_K(X) = \sum_k m_X^* \tensor{(\chi(I^*x_kJ=K))_{I,J}}{H_k} m_X.$$
		where $\chi$ is the indicator function. Since
		$$\sum_k m_X^* \tensor{(\chi(I^*x_kJ=K))_{I,J}}{H_k} m_X = \sum_k X^J\chi(I^*x_kJ=K)H_kX^I$$
		and the right hand side of the preceding equation is the derivative by the product rule, we are done.
	\end{proof}

	The following gives a characterization of monotone functions in terms of their power series, similarly to
	Nevanlinna's solution to the Hamburger moment problem\cite{nev22}.
	\begin{theorem}\label{heltonHankelX}
		If $f(X) =\sum_I c_IX^I$ is a real analytic locally monotone free function for $\|X\|<d+\epsilon$,
		then for each $k,$ the $x_k$-localizing matrix of coefficients $(c_{I^*x_kJ})_{I,J}$
		is positive semidefinite and compact.
	\end{theorem}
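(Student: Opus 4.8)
The plan is to identify the $x_k$-localizing matrix of coefficients with an operator on the free coefficient Hardy space and then to deduce its positivity by patching together the local data coming from the derivative. Set $T_k := (c_{I^*x_kJ})_{I,J}$, viewed as an operator on $H^2_d = l^2(\mathcal I)$. Compactness of $T_k$ is precisely Lemma \ref{heltonHankelXCompact}, so in particular $T_k$ is bounded; it remains only to show $\ip{T_k v}{v} \geq 0$ for every $v \in H^2_d$, which together with that compactness is exactly the assertion of the theorem.

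First I would take $m$ to be the standard monomial basis vector $m_X = (X^I)_{I \in \mathcal I}$. Since $\sum_{|I| = n} \norm{X^I}^2 \leq (d\norm{X}^2)^n$ and $d\norm{X}^2 < 1$ on $D' := \set{X \in \M^d : \norm{X} < \tfrac1d}$, the map $X \mapsto \norm{m_X}$ is locally bounded on $D'$, so $m$ is a global monomial basis vector for $D'$. By Proposition \ref{derivFormula},
$$Df(X)[H] = \sum_k m_X^* \tensor{T_k}{H_k} m_X$$
for every $X \in D'\cap\M_n^d$ and every $H \in \M_n^d$. Thus the tuple $(T_k)_k$ realizes the Hamburger model formula of Lemma \ref{unique} at each such $X$; since $f$ is locally monotone, that lemma forces $P^m_X T_k P^m_X = A^k_X$, the canonical positive operator produced there. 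In particular $\ip{T_k v}{v} = \ip{A^k_X v}{v} \geq 0$ for all $v \in \vecspace^m_X$ and all $X \in D'$.

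Next I would globalize. By Proposition \ref{denseSpan} the union $\bigcup_{X \in D'} \vecspace^m_X$ is dense in $H^2_d$, so given an arbitrary $v \in H^2_d$ one may choose $v_n \to v$ with each $v_n \in \vecspace^m_{X_n}$ for some $X_n \in D'$. Since $T_k$ is bounded, the inner product is continuous along norm-convergent sequences, whence $\ip{T_k v}{v} = \lim_n \ip{T_k v_n}{v_n} \geq 0$. Therefore $T_k \geq 0$, and combined with Lemma \ref{heltonHankelXCompact} this proves the theorem.

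Most of this step is bookkeeping on top of results already in hand, so the substantive difficulties are upstream: the Choi--Kraus reduction behind Lemmas \ref{finitehamburger} and \ref{unique}, and the coefficient-decay estimate behind the compactness of $T_k$. The one point inside this step that is not purely formal is the passage from positivity of the compressions $P^m_X T_k P^m_X$ to global positivity of $T_k$: this transfer needs the a priori norm control on $T_k$ (boundedness, here even compactness) together with the density of $\bigcup_X \vecspace^m_X$, since positivity level by level on the Szeg\"o spaces is not by itself enough. This is precisely why the compactness lemma is proved before, and invoked in, this theorem rather than obtained as a corollary afterward.
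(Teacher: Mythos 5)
Your proof is correct and follows essentially the same route as the paper's: invoke Proposition \ref{derivFormula} to express $Df(X)[H]$ via the localizing matrices, use Lemma \ref{unique} to force $P^m_X (c_{I^*x_kJ})_{I,J} P^m_X = A^k_X \geq 0$, and pass to global positivity via the density in Proposition \ref{denseSpan}, with compactness supplied by Lemma \ref{heltonHankelXCompact}. Your added remark that boundedness (here compactness) is what makes the density-to-global transfer legitimate is a correct and useful clarification that the paper leaves implicit.
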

	\begin{proof}
		Let $m_X = (X^I)_I.$
		Note, for $\|X\| < \frac{1}{d},$ $m_X$ is bounded and
			\beq \label{heltonHankel}
				Df(X)[H] = \sum_k m_X^* \tensor{(c_{I^*x_kJ})_{I,J}}{H_k} m_X
			\eeq
		via Lemma \ref{derivFormula}.
		
		By Lemma \ref{unique}
			$$P_X (c_{I^*x_kJ})_{I,J} P_X$$
		is positive semidefinite, and thus since $\vecspace = \cup_X \vecspace_X$ is dense in
		$H_d^2$ by Proposition \ref{denseSpan}, $(c_{I^*x_kJ})_{I,J}$ is positive semidefinite.
	\end{proof}
	We remark that expressions of the form \eqref{heltonHankel} have been used to analyze the
	noncommutative Hessian to obtain results on convex functions \cite{helmconvex04}. 
	
	Reinterpreting Theorem \ref{heltonHankelX}, we immediately obtain a model for a locally monotone function.
	\begin{corollary}
		Let $f(X) =\sum_I c_IX^I$ be a  real analytic locally monotone free function for $\|X\|<d+\epsilon$.
			The Hamburger model for $f$ on $\|X\|<\frac{1}{d}$ is given by the formula	
			$$Df(X)[H] = \sum_k m_X^* \tensor{(c_{I^*x_kJ})_{I,J}^{1/2}}{I} \tensor{I}{H_k} \tensor{(c_{I^*x_kJ})^{1/2}_{I,J}}{I} m_X.$$
	\end{corollary}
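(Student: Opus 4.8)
The plan is to read the formula directly off Theorem \ref{heltonHankelX} and Proposition \ref{derivFormula}, the only genuine work being to factor the localizing matrix through its positive square root and to check that the resulting functions constitute a Hamburger model in the sense of Section \ref{construction}. Write $C_k := (c_{I^*x_kJ})_{I,J}$, regarded as an operator on $H^2_d = l^2(\mathcal I)$. By Theorem \ref{heltonHankelX} each $C_k$ is positive semidefinite and compact; in particular it is bounded and self-adjoint, so it has a unique positive semidefinite square root $C_k^{1/2}$, which is again bounded, self-adjoint and compact, with $C_k^{1/2} C_k^{1/2} = C_k$.

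By Proposition \ref{derivFormula}, for $\|X\| \le \tfrac 1d$ the monomial basis vector $m_X = (X^I)_I$ of Definition \ref{szego} is bounded and
\[
Df(X)[H] = \sum_k m_X^*\,\tensor{C_k}{H_k}\,m_X .
\]
I would then factor each summand using the multiplicativity of the vertical tensor, $\tensor{AC}{BD} = \tensor{A}{B}\tensor{C}{D}$, together with the self-adjointness of $\tensor{C_k^{1/2}}{I}$:
\[
m_X^*\,\tensor{C_k}{H_k}\,m_X
= m_X^*\,\tensor{C_k^{1/2}}{I}\,\tensor{I}{H_k}\,\tensor{C_k^{1/2}}{I}\,m_X
= \Bigl(\tensor{C_k^{1/2}}{I}\,m_X\Bigr)^{*}\,\tensor{I}{H_k}\,\Bigl(\tensor{C_k^{1/2}}{I}\,m_X\Bigr).
\]
Setting $u_k(X) := \tensor{C_k^{1/2}}{I_n}\,m_X$ for $X \in D_n$ then yields $Df(X)[H] = \sum_k u_k(X)^*\,\tensor{I}{H_k}\,u_k(X)$, which is exactly the asserted formula.

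Finally I would verify that $u_1, \dots, u_d$ genuinely form a Hamburger model. This is routine: $m$ is a locally bounded free function on $\{\|X\| < \tfrac1d\}$, and tensoring on the extrinsic slot by the fixed bounded operator $C_k^{1/2}$ commutes both with direct sums and with the conjugations $\tensor{I}{U^*}(\cdot)\tensor{I}{U}$ acting on the intrinsic slot, so each $u_k$ is again a real free function of the kind appearing in the definition. To justify calling this \emph{the} Hamburger model, I would invoke Lemma \ref{unique} applied to $m$: since the compression $P_X C_k P_X$ is the uniquely determined operator $A^k_X$, the model $u$ agrees with the one constructed in Section \ref{construction} up to the isometry furnished by Theorem \ref{lurk}. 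I expect the only delicate point to be the bookkeeping in the infinite-dimensional extrinsic slot — that $\tensor{C_k^{1/2}}{I}m_X$ is well defined and that the tensor manipulations remain valid for the compact (rather than merely finite-rank) operator $C_k^{1/2}$ — which is supplied precisely by the boundedness of $C_k^{1/2}$ from Theorem \ref{heltonHankelX} and the boundedness of $m_X$ for $\|X\| \le \tfrac1d$ from Lemma \ref{powerSeriesConverge}.
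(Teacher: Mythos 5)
Your argument is correct and follows exactly the route the paper intends: the paper itself only remarks that the corollary is obtained ``immediately'' by reinterpreting Theorem \ref{heltonHankelX}, and your proposal supplies precisely the implicit steps --- take the positive square root of each compact positive semidefinite localizing matrix $C_k$, use $\tensor{A}{B}\tensor{C}{D}=\tensor{AC}{BD}$ to split $\tensor{C_k}{H_k}$ as $\tensor{C_k^{1/2}}{I}\tensor{I}{H_k}\tensor{C_k^{1/2}}{I}$ in Proposition \ref{derivFormula}'s formula, and observe that $u_k(X)=\tensor{C_k^{1/2}}{I}m_X$ is a well-defined free function because $C_k^{1/2}$ is bounded and $m_X$ is bounded for $\|X\|\le\frac1d$. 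The appeal to Lemma \ref{unique} to identify this with \emph{the} Hamburger model is a reasonable extra remark, though not strictly required to verify the displayed identity.
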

	
	\begin{lemma} \label{analCont}
		Let $f: D \rightarrow \RR$ be a  real analytic locally monotone free function.
		For any $X \in D$, the Hamburger model for $f$ analytically continues to a free domain containing
		$X.$
	\end{lemma}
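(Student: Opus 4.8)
The plan is to use the explicit description of the Hamburger model furnished by its construction and to continue it piece by piece: the model is $\flattensor{T_i}{I}m_X$, where $m$ is a monomial vector that visibly continues holomorphically and $T_i$ is a fixed operator, so everything comes down to showing $T_i$ is bounded. Fix $X_0\in D$. First I would shrink $D$ to a bounded open neighborhood of $X_0$ inside the original domain, so $D\subseteq\{\|X\|<R\}$ for some $R>0$; the hypotheses of the lemma are inherited. Since $f$ is real analytic, the remark following the definition of a real analytic free function together with Corollary \ref{commieCor} gives a complex analytic free function $\tilde f$ on a complex free domain $\Omega$ with $X_0\in\Omega$, and I would shrink $\Omega$ so that $\Omega\subseteq\{Z:\|Z\|<R\}$. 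Next I would fix $\rho>R\sqrt d$ and use the global monomial basis vector $m_X=(\rho^{-|I|}X^I)_I$; the rapid decay of the coefficients makes $Z\mapsto m_Z=(\rho^{-|I|}Z^I)_I$ an $\ell^2(\mathcal I)$-valued series converging uniformly on the free ball $B=\{Z:\|Z\|<\rho/\sqrt d\}$, so $m$ is a bounded holomorphic free function on $B$, and $\overline\Omega\subseteq B\ni X_0$. Running the construction of the Hamburger model with this $m$, $f$ acquires a Hamburger model on $D$ of the form $u_i(X)=\flattensor{T_i}{I}m_X$, where $T_i$ is the inclusion of $\vecspace=\bigcup_{X\in D}\vecspace^m_X$ into the completion $K_i$ of $\vecspace/\ker q^i$, and $q^i$ is the direct limit of the forms $\ip{A^i_Xu}{v}$ with $A^i_X\ge0$ the operators of Lemma \ref{unique}.

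The crux is the bound $\sup_{X\in D}\|A^i_X\|<\infty$, which forces $q^i(v,v)\le C\|v\|^2$ on $\vecspace$ and hence extends $T_i$ to a bounded operator $\overline T_i\colon H^2_m\to K_i$. To obtain it I would run the argument of Section \ref{hankel} in coordinates centered at $X_0$: expanding $\tilde f$ in its noncommutative power series about $X_0$ and combining Lemma \ref{unique} with the analogue of Proposition \ref{derivFormula} for that expansion, one identifies $A^i_X$ --- up to conjugation by the weights $\rho^{-|I|}$ and compression to $\vecspace^m_X$ --- with the $x_k$-localizing matrices of the recentered expansion; the coefficient-decay estimate in the spirit of Lemma \ref{powerSeriesConverge} then shows, exactly as in Lemma \ref{heltonHankelXCompact}, that those localizing matrices are compact, hence uniformly bounded in $X$. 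Once $\overline T_i$ is in hand I would set $\tilde u_i(Z)=\flattensor{\overline T_i}{I}m_Z$ for $Z\in B$; each $\tilde u_i$ is a bounded holomorphic free function on the free domain $B\ni X_0$ (a composition of the holomorphic free function $m$ with the bounded operator $\overline T_i$) and restricts to $u_i$ on $B\cap D$, so it is the required analytic continuation. The Hamburger model identity then extends from $B\cap\RR^d$ to all of $B$ by analytic continuation: for each fixed $H$, both $D\tilde f(Z)[H]$ and $\sum_i\tilde u_i(Z\ad)\ad\flattensor{I}{H_i}\tilde u_i(Z)$ are holomorphic in $Z\in B$ and coincide on the totally real maximal submanifold $B\cap\RR^d$.

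The step I expect to be the genuine obstacle is the identification of the $A^i_X$ with compact localizing matrices when the expansion point $X_0$ is a matrix rather than a scalar tuple: recentering a free function at a matrix point replaces the ordinary monomial power series used in Section \ref{hankel} by the noncommutative Taylor--Taylor expansion, so one must build localizing matrices from the multilinear coefficients of that expansion and re-prove the combinatorial decay estimate (the analogue of Lemma \ref{powerSeriesConverge}) in that setting before compactness can be concluded. Everything after that --- boundedness of $\overline T_i$, the definition of $\tilde u_i$, and the analytic continuation of the model identity --- is routine.
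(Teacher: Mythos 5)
Your proposal takes a genuinely different route from the paper, and the step you yourself flag as ``the genuine obstacle'' is in fact a real gap, not a routine technicality. You want to prove $\sup_{X\in D}\|A^i_X\|<\infty$ by identifying the $A^i_X$ with compressions of $x_k$-localizing matrices of a power series for $f$ recentered at the matrix point $X_0$, then invoking analogues of Lemma \ref{powerSeriesConverge} and Lemma \ref{heltonHankelXCompact}. But the machinery of Section \ref{hankel} --- the monomial vector $m_X=(X^I)_I$, the localizing matrices $(c_{I^*x_kJ})_{I,J}$, and the derivative formula in Proposition \ref{derivFormula} --- is built specifically for a \emph{plain} free power series centered at the origin. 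When the expansion point is a matrix tuple $X_0\in\M_n^d$ the correct expansion is a noncommutative Taylor--Taylor series whose terms are multilinear in the increment $H$; these are not monomials $X^I$ times scalar coefficients, so there is no $m_X$, no $x_k$-localizing matrix of the form $(c_{I^*x_kJ})$, and Proposition \ref{derivFormula} and Lemma \ref{heltonHankelXCompact} do not apply. Developing the corresponding objects and estimates from scratch would be a substantial project, and you have not sketched how it would go, so this remains an open hole in the argument.

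The paper avoids this obstacle entirely by a frame trick: it chooses positive tuples $F_1,\dots,F_{dn^2}$ spanning $\RR_n^d$ and passes to the scalar-valued functions $h_i(X)=(\flattensor{e_i}{I})^*f(\flattensor{X_0}{I}+X_F)(\flattensor{e_i}{I})$ of $dn^2$ noncommuting variables. This converts an expansion about the matrix point $X_0$ into an expansion of scalar functions about the scalar origin, where Section \ref{hankel} applies verbatim and produces the explicit localizing-matrix models for each $h_i$. It then applies the lurking isometry argument for linear forms (Theorem \ref{lurk}) to relate the abstract model $u_i$ to the explicit localizing-matrix model: since the explicit side $\ph(X)=\bigoplus_{j,k}\tensor{(c^j_{I^*x_kJ})^{1/2}_{I,J}}{I}m_X$ continues holomorphically by its formula, the isometry carries the continuation across to $\theta$ and hence to each $u_i$. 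Note that this route never needs to show the model operators are bounded, never needs $\sup_X\|A^i_X\|<\infty$, and never needs to extend $T_i$ to all of $H^2_m$; the lurking isometry transfers the continuation property directly. If you want to pursue your boundedness-of-$T_i$ route, the natural fix is to import the paper's frame idea to reduce your recentered expansion to a genuine power series at the origin in $dn^2$ variables before invoking the Section \ref{hankel} estimates, rather than trying to redo those estimates for Taylor--Taylor expansions.
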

	\begin{proof}
		Let $f$ have the Hamburger model
			$$Df(X)[H] = \sum_i u_i(X)^* \tensor{I}{H_i} u_i(X).$$
			
		Let $X_0 \in D\cap \RR^d_n.$
		Let $F_1, \ldots F_{dn^2}$ be tuples of positive semidefinite matrices which span $\RR^d_n$
		large enough so that
		the function of $dn^2$ variables
		 $$h_{i}(X) = (\flattensor{e_i}{I})^*f(\flattensor{X_0}{I} + X_F)(\flattensor{e_i}{I}) $$
		is analytic for all $\|X\| \leq dn^2 + \epsilon.$
		Note that each $h_i$ is a monotone function. 
		Thus for each $i$ each
			$$Dh_i(X)[H] = \sum_k m_X^* \tensor{(c^i_{I^*x_kJ})_{I,J}^{1/2}}{I} \tensor{I}{H_k} 			
			\tensor{(c^i_{I^*x_kJ})^{1/2}_{I,J}}{I} m_X.$$
		Note that
			$$(\flattensor{e_j}{I})^*Df(\flattensor{X_0}{I} + X_F)[H_F](\flattensor{e_j}{I}) =  Dh_j(X)[H].$$
		So,
			\begin{align*}
			&\sum_j \sum_i (\flattensor{e_j}{I})^*u_i(\flattensor{X_0}{I} + X_F)^*
			\tensor{I}{(H_F)_i} u_i(\flattensor{X_0}{I} + X_F)(\flattensor{e_j}{I}) \\
			&=
			\sum_j \sum_k m_X^* \tensor{(c^j_{I^*x_kJ})_{I,J}^{1/2}}{I}	\tensor{I}{H_k} 		
			\tensor{(c^j_{I^*x_kJ})^{1/2}_{I,J}}{I} m_X.
			\end{align*}
		Expanding out the frame gives
			\begin{align*}
			&\sum_k \sum_j \sum_i (\flattensor{e_j}{I})^*u_i(\flattensor{X_0}{I} + X_F)^*
			\tensor{I}{(\flattensor{F_k}{H})_i} u_i(\flattensor{X_0}{I} + X_F)(\flattensor{e_j}{I}) \\ 
			&= \sum_j \sum_k m_X^* \tensor{(c^j_{I^*x_kJ})_{I,J}^{1/2}}{I}	\tensor{I}{H_k} 		
			\tensor{(c^j_{I^*x_kJ})^{1/2}_{I,J}}{I} m_X.
			\end{align*}
		Thus, by the lurking isometry argument for linear forms, there is a unitary $U$ such that
		$\tensor{U}{\flattensor{I}{I}}$
	    takes
	    $$\theta(X) = \bigoplus_{i,j,k} \tensor{I}{(\flattensor{F_k}{I})_i^{1/2}} u_i(\flattensor{X_0}{I} + X_F)(\flattensor{e_j}{I})$$
	    to $$\ph(X) = \bigoplus_{j,k} \tensor{(c^j_{I^*x_kJ})^{1/2}_{I,J}}{I} m_X.$$ Since
	    $\ph(X)$  analytically continues to a neighborhood of $0$ via its formula, so does 
	    $\theta(X).$
	    
	    The above implies each $u_i$ itself must analytically continue, since the value
	    of $\theta(X)$ determines the values of each $u_i(X).$
	    The continuation of $u_i$ a free function by Corollary \ref{commieCor}.
	    Thus, the analytic continuation of the Hamburger model is given by the formula
	    		$$Df(Z)[H] = \sum_i u_i(Z^*)^* \tensor{I}{H_i} u_i(Z)$$
	    	since this agrees with the Hamburger model on a neighborhood of $X_0.$
	\end{proof}
	
	Thus, we obtain the desired L\"owner extension.	
	Thus, we obtain L\"owner's theorem by a rearrangement argument.
	\begin{theorem}\label{nevrawr}
		Let $D$ be a free domain.
		If $f:D \rightarrow \RR$ is a  real analytic locally monotone free function, then $f$ has a L\"owner extension.
	\end{theorem}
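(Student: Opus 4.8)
The plan is to assemble three things already in hand. By the construction of Section \ref{construction}, $f$ has a Hamburger model, hence by Lemma \ref{nevham} a boundary Nevanlinna model $u_1,\dots,u_d$ on $D$; and by Lemma \ref{analCont} this model continues analytically. First I would patch the (unique) local continuations furnished by Lemma \ref{analCont} into a single free domain $\tilde D\subset\M^d$ which contains $D$ and is closed under the adjoint, on which the $u_i$ extend to free functions (Corollary \ref{commieCor}) and $f$ extends to a free function $F$, so that the continued Hamburger model
$$Df(Z)[H]=\sum_i u_i(Z^*)^*\,\tensor{I}{H_i}\,u_i(Z)$$
holds on $\tilde D$. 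Since $F(Z^*)=F(Z)^*$ on $D$ (where $f$ is real), uniqueness of analytic continuation, applied to the holomorphic map $Z\mapsto f(Z^*)^*$, shows $F(Z^*)=F(Z)^*$ on all of $\tilde D$.

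Next I would rerun the rearrangement of Lemma \ref{nevham} for the continued objects: plugging $Z=\bpm X & 0 \\ 0 & W \epm$ and $H=\bbm 0 & X-W \\ X-W & 0 \ebm$ into the continued Hamburger model and invoking Proposition \ref{derUnit} gives $F(X)-F(W)=\sum_i u_i(W^*)^*\,\tensor{I}{X_i-W_i}\,u_i(X)$, and replacing $W$ by $W^*$ together with $F(W^*)=F(W)^*$ yields the boundary Nevanlinna identity
$$F(X)-F(W)^*=\sum_i u_i(W)^*\,\tensor{I}{X_i-W_i^*}\,u_i(X),\qquad X,W\in\tilde D,$$
which is precisely the form of the model in Theorem \ref{ncschur2} with data $u_i$. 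Taking $W=X$ with $X\in\tilde D\cap\Pi^d$ gives $\IM F(X)=\sum_i u_i(X)^*\,\tensor{I}{\IM X_i}\,u_i(X)\ge 0$, so $F$ carries the free set $\tilde D\cap\Pi^d\subset\Pi^d$ into $\cc{\Pi^1}$.

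It remains to pass from a neighborhood of $D$ inside $\Pi^d$ to all of $\Pi^d$. If every $u_i$ vanishes identically the displayed identity forces $F$ to be a real constant, so $f$ is constant and a constant function is a L\"owner extension; otherwise, since $\IM X_i$ at a point of $\tilde D\cap\Pi^d$ may be any positive matrix, a zero of $\IM F$ there would force all $u_i$ to vanish at that point and hence identically, so $\IM F>0$ throughout $\tilde D\cap\Pi^d$ and $F$ restricts to a free function $\tilde D\cap\Pi^d\to\Pi^1$ obeying the model of Theorem \ref{ncschur2}. That theorem provides a free extension $\hat F\colon\Pi^d\to\Pi^1$. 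I would then set $F^\sharp$ equal to $\hat F$ on $\Pi^d$ and to $f$ on $D$, noting both take values in $\cc{\Pi^1}$. Since $\hat F=F$ on $\tilde D\cap\Pi^d$ and $F$ is continuous on the open set $\tilde D\supset D$ with $F|_D=f$, the map $F^\sharp$ is continuous on $\Pi^d\cup D$; it is a free function because each piece is, and no direct sum or similarity preserving $\Pi^d\cup D$ mixes the two pieces (a self-adjoint tuple has real spectrum, so it is not similar into $\Pi^d$). Hence $F^\sharp$ is a L\"owner extension of $f$.

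The step I expect to be the main obstacle is the bookkeeping of the continuation: verifying that the local continuations of Lemma \ref{analCont} patch to a single globally defined free $F$ (and $u_i$) on a $*$-closed free domain satisfying the continued Hamburger model, and then that the restriction to $\Pi^d$ really satisfies the hypotheses of Theorem \ref{ncschur2}, in particular the dichotomy that keeps $\IM F$ off the real axis. The genuinely analytic work has already been carried out in Lemma \ref{analCont}; what remains is the "rearrangement" of the Hamburger model into Pick-model form plus one application of the model theorem.
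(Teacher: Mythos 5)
Your proposal assembles the same ingredients the paper uses — Hamburger model from Section \ref{construction}, boundary Nevanlinna model via Lemma \ref{nevham}, local analytic continuation from Lemma \ref{analCont}, the model theorem \ref{ncschur2}, and a final patching argument — so it is essentially the paper's route. The one organizational difference is the order in which you globalize. You try to patch all of the local continuations from Lemma \ref{analCont} into one $*$-closed free domain $\tilde D$ carrying a single coherent family $u_i$ and a single continued $F$, and only then invoke Theorem \ref{ncschur2}. The paper instead applies Theorem \ref{ncschur2} \emph{locally}, obtaining a L\"owner extension $F_X:\Pi^d\to\cc{\Pi}$ from each $D^X$, and patches those: given $X_0,Y_0\in D$, the extension coming from $D^{X_0\oplus Y_0}$ restricts to the ones from $D^{X_0}$ and $D^{Y_0}$, which forces them all to agree. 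That ordering is cleaner precisely where you flag the main obstacle — one never has to globally assemble the $u_i$ on overlapping complex neighborhoods, only to compare already-globally-defined maps on $\Pi^d$, where direct sums and identity theorems at each matrix level do the work. So your self-identified gap is real as stated, but the paper avoids it rather than resolves it; if you insist on globalizing $\tilde D$ first, you would still owe a uniqueness-of-continuation argument showing the overlaps of the local $u_i$ agree, essentially the same uniqueness the paper uses at the last step.

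One more point in your favor: the explicit verification that $F(Z^*)=F(Z)^*$ and the dichotomy handling the $\overline{\Pi}$-versus-$\Pi$ mismatch are things the paper glosses over when it cites Theorem \ref{ncschur2} (whose statement sends $D\subset\Pi^d$ into $\Pi$, not $\cc\Pi$). However, your dichotomy as written only treats the case $\IM F(X)=0$ entirely; to run Theorem \ref{ncschur2} you really need $\IM F(X)>0$, i.e.\ no kernel. From $\IM F(X)v=0$ with $\IM X_i>0$ you get $u_i(X)v=0$ for all $i$, not $u_i(X)=0$, and that alone does not force $F$ to be constant — it only pins down the quadratic form $\ip{F(\cdot)v}{v}$. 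So either one must show that free Pick functions obey the interior rigidity principle (a kernel in $\IM F$ at one interior point forces global degeneracy), or one should note that Theorem \ref{ncschur2} in fact holds, with the same proof, for maps into $\cc\Pi$ since only the Nevanlinna identity is used. Either fix is routine, but the argument as you wrote it does not quite close that case.
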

	\begin{proof}
		Let $X_0 \in D.$	
	
		Note that formula
		$$Df(Z)[H] = \sum_i u_i(Z^*)^* \tensor{I}{H_i} u_i(Z)$$
		can be used to derive a Nevanlinna model
		$$f(X) - f(Y)^* = \sum_i u_i(Y)^* \tensor{I}{X_i-Y_i^*} u_i(X).$$
		by Proposition \ref{derUnit}.
		Thus, there is some $D^X$ containing $X_0$ such that $f|_{D^X}$ has an analytic continuation to $\Pi^d$ by Lemma \ref{analCont} via Theorem \ref{ncschur2}.
		
		Note that for $X_0, Y_0 \in D$ there is a L\"owner extension at $X_0 \oplus Y_0$ which induces the
		L\"owner extension at $X_0$ and $Y_0$ and thus must be the same.
	\end{proof}

\section{The Nevanlinna representations} \label{NevSect}
Rolf Nevanlinna characterized the class of Pick functions in terms of three parameters: a real number, a nonnegative real number and a finite positive Borel measure on the real line.
	\begin{theorem}[R. Nevanlinna \cite{nev22}] \label{NevanlinnaRepComp}
	Let $h: \mathbb{H} \to \C$. The function $h$ is a Pick function if and only if there exist $a \in \R, b \geq 0$, and a finite positive Borel measure $\mu$ on $\R$ such that 
	\[
	h(z) = a + bz + \int\!\frac{1 + tz}{t - z}\, \dd\mu(t)
	\]
	for all $z \in \mathbb{H}$. Moreover, for any Pick function $h$, the numbers $a \in \R, b \geq 0$ and the measure $\mu \geq 0$ are uniquely determined.
	\end{theorem}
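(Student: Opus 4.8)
The plan is to prove the two implications and the uniqueness clause in turn; only one ingredient is not purely computational.

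\emph{Sufficiency.} Suppose $h(z) = a + bz + \int_\R \frac{1+tz}{t-z}\,\dd\mu(t)$ with $a \in \R$, $b \geq 0$, and $\mu \geq 0$ finite. For fixed $z \in \mathbb{H}$ the integrand is a bounded continuous function of $t$ (it extends continuously to $t = \infty$ with value $z$), so the integral is well defined; on compact subsets of $\mathbb{H}$ it is uniformly bounded in $t$ and holomorphic in $z$, so $h$ is holomorphic by Morera's theorem. For the imaginary part, the identity $\IM \frac{1+tz}{t-z} = \frac{(1+t^2)\,\IM z}{\abs{t-z}^2}$ gives
\[
\IM h(z) = \left( b + \int_\R \frac{1+t^2}{\abs{t-z}^2}\,\dd\mu(t) \right) \IM z \geq 0,
\]
so $h$ maps $\mathbb{H}$ into $\cc{\mathbb{H}}$, i.e.\ $h$ is a Pick function.

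\emph{Necessity.} Let $h$ be a Pick function. The first step is to represent $u := \IM h$, which is a nonnegative harmonic function on $\mathbb{H}$. By the Herglotz--Riesz representation of positive harmonic functions on a half-plane (obtainable from the classical Herglotz theorem on the disk via a Cayley transform), there are a constant $b \geq 0$ and a positive Borel measure $\sigma$ on $\R$ with $\int_\R \frac{\dd\sigma(t)}{1+t^2} < \infty$ such that $u(x+iy) = by + \int_\R \frac{y}{(x-t)^2+y^2}\,\dd\sigma(t)$, and here $b = \lim_{y\to\infty} u(iy)/y$ extracts the linear term. Next, set $g(z) = bz + \int_\R \left( \frac{1}{t-z} - \frac{t}{1+t^2} \right)\dd\sigma(t)$; since the integrand equals $\frac{1+tz}{(t-z)(1+t^2)} = O(t^{-2})$ for large $t$, this integral converges absolutely, $g$ is holomorphic on $\mathbb{H}$, and using $\IM\frac{1}{t-z} = \frac{y}{(x-t)^2+y^2}$ one checks $\IM g = u = \IM h$. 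Hence $h - g$ is holomorphic on $\mathbb{H}$ with zero imaginary part, so $h - g \equiv a$ for some $a \in \R$. Finally, putting $\dd\mu(t) = \dd\sigma(t)/(1+t^2)$ (a finite positive measure) and using the algebraic identity $(1+t^2)\left( \frac{1}{t-z} - \frac{t}{1+t^2} \right) = \frac{1+tz}{t-z}$ converts the $\sigma$-integral into $\int_\R \frac{1+tz}{t-z}\,\dd\mu(t)$, which yields $h(z) = a + bz + \int_\R \frac{1+tz}{t-z}\,\dd\mu(t)$.

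\emph{Uniqueness and the main obstacle.} For uniqueness, evaluating any such representation at $z = i$ and using $\frac{1+ti}{t-i} = i$ for real $t$ gives $h(i) = a + i\left( b + \mu(\R) \right)$, so $a = \RE h(i)$; the limit $b = \lim_{y\to\infty}\IM h(iy)/y$ pins down $b$; and $\sigma = (1+t^2)\,\dd\mu$ is then recovered from the nontangential boundary values of $\IM h$ via Stieltjes--Perron inversion (equivalently, $b$ and $\mu$ are unique by the uniqueness clause of the Herglotz--Riesz theorem). The one genuinely nontrivial point is the Herglotz--Riesz representation of $\IM h$ together with the remark that a Pick function is determined by its imaginary part up to an additive real constant; note that one cannot shortcut this by subtracting $bz$ and invoking Theorem \ref{NevanlinnaRep}, since $h(z) - bz$ need not satisfy the residue condition \eqref{NevanlinnaRepFormulaAsy}. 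Everything else is the bookkeeping identity above and routine estimates.
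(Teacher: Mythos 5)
The paper does not prove Theorem~\ref{NevanlinnaRepComp}; it is quoted as a classical result of Nevanlinna with a citation, so there is no in-paper argument to compare against. Your proof is correct and is the standard one: sufficiency by the direct computation $\IM\frac{1+tz}{t-z}=\frac{(1+t^2)\IM z}{\abs{t-z}^2}$; necessity via the Herglotz--Riesz representation of the nonnegative harmonic function $\IM h$, followed by the change of measure $\dd\mu=\dd\sigma/(1+t^2)$ and the identity $(1+t^2)\bigl(\tfrac{1}{t-z}-\tfrac{t}{1+t^2}\bigr)=\tfrac{1+tz}{t-z}$ to pass from the conditionally normalized Cauchy kernel to the compact one; uniqueness by reading off $a=\RE h(i)$, $b=\lim_{y\to\infty}\IM h(iy)/y$, and recovering $\mu$ from boundary values. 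Your aside that one cannot shortcut the necessity step by subtracting $bz$ and invoking Theorem~\ref{NevanlinnaRep} is also correct, since $h(z)-bz$ generically still has a nonzero finite limit along $iy$ (namely $a-\int t\,\dd\mu$ plus imaginary part $\mu(\R)$), so $s\abs{h(is)-bis}$ is unbounded and the hypothesis~\eqref{NevanlinnaRepFormulaAsy} fails.
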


This representation parametrizes all Pick function, generalizing Theorem \ref{NevanlinnaRep}, which represents Pick functions satisfying the growth condition \eqref{NevanlinnaRepFormulaAsy}.

The two representations given in Theorem \ref{NevanlinnaRepComp} and Theorem \ref{NevanlinnaRep}, which we refer to as Nevanlinna representations, were recently generalized to several variables in the work of Agler, {\mc}Carthy, and Young \cite{amyloew}, and later in Agler, Tully-Doyle, and Young for multivariable Pick functions that are also in the so-called L\"owner class \cite{aty12,aty13}. 

The $d$-variable L\"owner class $\mathcal L_d$ is the set of analytic functions that lift via the functional calculus to act on $d$-tuples of commuting operators with the property that they possess analytic extensions that take $\Pi^d \to \overline{\Pi}$.  In the case of one-variable Pick functions, the L\"owner class is the the entire Pick class by von Neumann's inequality \cite{vonN51, szn-foi}. For two variables, the classes coincide by And\^o's Theorem \cite{and63}. In three or more variables, the L\"owner class is a proper subset of the Pick class \cite{par70} \cite{var74}. The L\"owner class is conformally equivalent to the well-studied Schur-Agler class via a M\"obius transform taking the upper half plane $\mathbb{H}$ to the disk to obtain a map $\mathbb{D}^d \to \mathbb{D}.$

The multivariable Nevanlinna representations discussed in \cite{aty13} partitions the L\"owner class $\mathcal L^d$ into four types depending on asymptotic behavior at infinity. Note that these not only give representations of Pick functions, but also give formulas for constructing new Pick functions.
\begin{definition}
A \emph{positive decomposition} is a collection of positive operators $Y_1, \ldots, Y_d$ on $\hilbert$ so that $\sum_i Y_i = I_\hilbert$, and the operator $z_Y: \C^d \to \hilbert$ is given by 
\[
z_Y = \sum_{i=1}^d z_i Y_i.
\] An \emph{orthogonal decomposition} is a positive decomposition where each $Y_i$ is a projection.
\end{definition}

By the following theorem, every $h$ in $\mathcal L_d$ has a representation of at least one type. Recall that $\mathbb H^d = \Pi_1^d$.
\begin{theorem}[Agler, Tully-Doyle, Young \cite{aty13}]\label{oldnevreps}
Let $h$ be a function defined on $\Pi_1^d$. 
\begin{description}
\item[Type 4]  The function $h \in \mathcal L_d$ if and only if there exist an orthogonally decomposed
Hilbert space $\hilbert$ and a vector $v \in \hilbert$ such that 
\[
h(z) = \ip{M(z)v}{v},
\]
where $M(z)$ is the matricial resolvent as defined in Proposition 3.1 in \cite{aty13}.
\item[Type 3] There exist $a \in \R$, a Hilbert space $\hilbert$, a self-adjoint operator $A$ on $\hilbert$, a vector $v \in \hilbert$, and a positive decomposition
 $Y$ of $\hilbert$ so that
\[
h(z) = a + \ip{(1-iA)(A- z_Y)\inv(1 + z_Y A)(1 - iA)\inv v}{v}
\]
if and only if $h \in \mathcal L_d$ and
\[
\liminf_{s\to\infty} \frac{1}{s} \IM h(is, \ldots, is) = 0.
\]
\item[Type 2]  There exist $a \in \R$, a Hilbert space $\hilbert$, a self-adjoint operator $A$ on $\hilbert$, a vector $v \in \hilbert$, and a positive decomposition $Y$ of $\hilbert$ so that
\[
h(z) = a + \ip{(A - z_Y)\inv v}{v}
\]
if and only if $h \in \mathcal L_d$ and
\[
\liminf_{s \to \infty} s \IM h(is, \ldots, is) < \infty.
\]
\item[Type 1]  There exist a Hilbert space $\hilbert$, a self-adjoint operator $A$ on $\hilbert$, a vector $v \in \hilbert$, and a positive decomposition $Y$ of $\hilbert$ so that 
\[
h(z) = \ip{(A - z_Y)\inv v}{v}
\]
if and only if $h \in \mathcal L_d$ and
\[
\liminf_{s \to \infty} s\abs{h(is, \ldots, is)} < \infty.
\]
\end{description}
\end{theorem}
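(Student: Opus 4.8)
The plan is to pass to the polydisk, where $\mathcal L_d$ is the Schur--Agler class and the four displayed formulas become operator realizations, to read off the Type 4 representation from Agler's realization theorem via a lurking isometry, and then to recover Types 1--3 by a Julia--Carath\'eodory analysis at the distinguished boundary point.

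The ``if'' directions are direct computations. For Type 1, write $h(z)=\ip{(A-z_Y)\inv v}{v}$; the resolvent identity gives $\IM h(z)=\ip{\IM z_Y\,w}{w}$ with $w=(A-\cc{z_Y})\inv v$, which is $\geq 0$ since $\IM z_Y=\sum_i(\IM z_i)Y_i\geq 0$, and substituting $A\otimes I-\sum_i Y_i\otimes Z_i$ for $A-z_Y$ shows the formula extends to commuting tuples with values in $\overline{\Pi}$, so $h\in\mathcal L_d$; because $\sum_iY_i=I$, putting $z=(is,\ldots,is)$ gives $s\,h(is,\ldots,is)\to i\|v\|^2$, so the Type 1 growth bound holds. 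Types 2 and 3 are the same computation after absorbing the real constant $a$ and, for Type 3, observing that $(1-iA)(A-z_Y)\inv(1+z_YA)(1-iA)\inv$ is the operator form of the kernel $\tfrac{1+tz}{t-z}$, so that $\tfrac1s\IM h(is,\ldots,is)\to 0$; the Type 4 ``if'' direction is Proposition 3.1 of \cite{aty13}, where $M(z)$ is built precisely so that $\ip{M(z)v}{v}$ lies in $\mathcal L_d$.

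For the ``only if'' directions, apply the Cayley transform $w_j=(z_j-i)/(z_j+i)$ in each variable together with a Cayley transform on the range, turning $h\in\mathcal L_d$ into a function $\psi$ in the Schur--Agler class of $\mathbb{D}^d$; by Agler's realization theorem (the $d$-variable form of Theorem \ref{classicalschurmodel}) $\psi$ has an Agler decomposition
\[
1-\cc{\psi(\nu)}\psi(w)=\sum_{i=1}^{d}(1-\cc{\nu_i}w_i)\,\ip{u_i(w)}{u_i(\nu)}.
\]
Rearranging this and extracting a lurking isometry (the commutative analogue of Theorem \ref{lurk}) gives a unitary colligation with transfer function $\psi$ whose internal space carries an orthogonal decomposition; transferring the colligation back to $\Pi_1^d$ produces the matricial resolvent of Proposition 3.1 in \cite{aty13}, which is the Type 4 representation. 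For Types 1--3 one then runs an asymptotic analysis of this representation along the diagonal --- equivalently a Julia--Carath\'eodory analysis of $\psi$ at $\mathbf{1}=(1,\ldots,1)$, since $z=(is,\ldots,is)$ corresponds to $w\to\mathbf{1}$ --- in which each successive growth bound forces a piece of the model to vanish: the Type 3 bound $\tfrac1s\IM h(is,\ldots,is)\to 0$ kills the linear-at-infinity component of $M(z)$ (the $bz$ term of the classical one-variable representation, Theorem \ref{NevanlinnaRepComp}), reducing it to $a+\ip{(1-iA)(A-z_Y)\inv(1+z_YA)(1-iA)\inv v}{v}$ with $A$ a possibly unbounded self-adjoint operator and $Y$ a positive decomposition; the Type 2 bound $\liminf_s s\,\IM h<\infty$ removes the $z_YA$ part, leaving $a+\ip{(A-z_Y)\inv v}{v}$ with $a=\lim_s\RE h(is,\ldots,is)$, the limit existing by monotonicity of $\IM h$ along the diagonal imaginary ray; and the Type 1 bound $\liminf_s s|h|<\infty$ forces $a=0$.

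The main obstacle is this boundary analysis: one must show the finite-$\liminf$ conditions are not merely necessary but actually force the global structural collapse of $M(z)$, which calls for careful control of domains and self-adjointness for the unbounded operators that appear at Type 3, and for handling the fact that only a $\liminf$, not a $\lim$, is hypothesized --- so every quantity extracted along the ray (the constant $a$, and the weak operator limits of resolvents $(A-isI)\inv$) must be argued to exist along the whole ray. Everything downstream of this is bookkeeping built on the Agler realization and the lurking-isometry machinery.
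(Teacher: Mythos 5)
Theorem \ref{oldnevreps} is cited from \cite{aty13}; the paper states it without proof, so there is no internal proof to compare against. That said, your outline does capture the strategy of \cite{aty13}, which is also the template the paper follows for the free generalization (Theorem \ref{newnevrep}): Cayley-transform to pass to the Schur--Agler class (or, in the paper's free case, to the free Herglotz class and Popescu's realization), extract a unitary colligation via a lurking isometry, transport back to the half-plane to obtain the Type 4 matricial resolvent, and then run an asymptotic analysis along $(is,\ldots,is)$ to descend through Types 3, 2, 1. Your ``if''-direction computations for Type 1, including $s\,h(is,\ldots,is)\to i\norm{v}^2$ via the resolvent identity and $\sum_i Y_i = I$, are correct.

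The one place the sketch misstates the mechanism is the passage from Type 3 to Type 2. The growth bound $\liminf_{s}s\,\IM h(is,\ldots,is)<\infty$ does not act by ``removing the $z_YA$ part'' of the Type 3 formula. What actually happens --- both in \cite{aty13} and in the paper's free proof --- is that reading this bound against the spectral measure $\nu$ of $A$ at $v$ gives $\int(1+t^2)\,d\nu<\infty$, i.e.\ $v\in\mathcal{D}(A)$, and only then does a purely algebraic manipulation (Proposition \ref{type32} here, corresponding to Propositions 5.3 and 5.5 in \cite{aty13}) rewrite the Type 3 formula in the Type 2 form $a+\ip{(A-z_Y)^{-1}v}{v}$. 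Relatedly, the claim that $\lim_s\RE h(is,\ldots,is)$ exists ``by monotonicity of $\IM h$ along the diagonal imaginary ray'' is not correct as stated: what is monotone increasing in $s$ is $s\,\IM h(is\chi)=\int\frac{s^2(1+t^2)}{t^2+s^2}\,d\nu$, and only \emph{after} a Type 3 representation is already in hand; and extracting the constant $a$ at the Type 1 stage requires its own argument (the paper's free version does it along a subsequence where $s_n h\to 0$). You correctly flag the $\liminf$-versus-$\lim$ tension as the main obstacle in your closing paragraph, but the body of the sketch treats these two steps as more automatic than they are.
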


In the following sections, we will extend this theorem to characterize free Pick functions. 

\subsection{The free Herglotz representation formula}

A \emph{Herglotz function} is a holomorphic function $h$ on $\D$ with $\RE h \geq 0.$ Herglotz functions were characterized by C. Carath\'eodory in \cite{car07} and given an integral representation by Gustav Herglotz in \cite{herg11}. J. Agler generalized this representation for functions in the $d$-variable strong Herglotz class, a conformal equivalent of the Schur-Agler class, in \cite{ag90}.  

To define free Herglotz functions, we need an analogue of the right halfplane. Denote by $\Psi \subset \M$ the right matrix polyhalfplane
\[
	\Psi = \{X \in \M | \RE X = \frac{1}{2}(X + X\ad) > 0\}.
\]
Recall that $\B^d$ is the set of $d$-tuples of strict contractions. Say that $h$ is a \emph{free Herglotz function} if $h$ is a free holomorphic function from $\B^d \to \Psi$. 

The representation of a classical Herglotz function $h$ with $h(0) = 1$ is given by a probability measure $\mu$ on the unit circle so that
	$$h(z) = \int_0^{2\pi} \frac{1 + e^{-i\theta}}{1 - e^{-i\theta}} \,\dd\mu(\theta).$$
The following theorem gives an analogous formula in the free case. The noncommutative Herglotz representation was originally proved by
G. Popescu in \cite[Theorem 3.1]{popescumajorant}.
We express the representation in terms of the geometry from the commutative case in Agler, Tully-Doyle and Young's proof of the Nevanlinna representations for L\"owner functions in several variables\cite{aty13}.

\begin{theorem}[Popescu \cite{popescumajorant}]
	Let $h$ be a free holomorphic function with $h(0) = 1$. The function $h$ is a free Herglotz function, that is $h:\B^d \to \Psi$, if and only if there exist a Hilbert space $\hilbert_d = \oplus_{i=1}^d \hilbert$, a unitary operator $U$ on $\hilbert_d$ and an isometry $V: \C \to \hilbert_d$ so that 
		\beq \label{herglotz}
		h(X) = \tensor{V\ad}{I} \left( I + \tensor{U}{I}\delta(X)\right)\left( I - \tensor{U}{I}\delta(X)\right)\inv \tensor{V}{I}, 
		\eeq
	where 
		$$\delta(X): \M^d \to \tensor{\hilbert_d}{\M} = \bigoplus_{i=1}^d \tensor{I}{X^i}.$$
\end{theorem}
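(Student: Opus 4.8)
The plan is to prove the free Herglotz representation by the lurking isometry method, in direct analogy with the classical Herglotz/Carathéodory argument and Agler's model-theoretic proof of the Schur--Agler representation. The forward direction (that the formula \eqref{herglotz} defines a free Herglotz function with $h(0)=1$) is routine: one checks $h(0) = V\ad V = 1$ since $V$ is an isometry, and $\RE h(X) > 0$ follows because $(I + W)(I-W)\inv$ has positive real part whenever $\|W\| < 1$ (here $W = \tensor{U}{I}\delta(X)$, which is a strict contraction since $U$ is unitary and $\|X_i\| < 1$); gradedness and respect for direct sums and similarity are inherited from the free structure of $\delta$. So the content is the converse.

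For the converse, first I would reduce the positivity of $\RE h$ to a single operator-valued kernel inequality. Writing $g = \half(h+1)$, the condition $\RE h \geq 0$ is equivalent to $\RE g \geq \half$, and after a Cayley-type rearrangement one extracts the statement that the kernel
\[
K(X,Y) = g(X) + g(Y)\ad - I
\]
is a positive kernel on $\B^d$ in the free sense, i.e. for all finite tuples $X^{(1)}, \ldots, X^{(m)} \in \B^d \cap \M_n^d$ the associated block matrix is positive semidefinite. The next step is the crucial one: I would invoke the free Agler-type model/realization machinery (Theorem \ref{ncschur3} and its proof technique, or equivalently the Schur--Agler realization in the free ball) to produce, from this positive kernel, free functions $u_1, \ldots, u_d$ and an auxiliary kernel identity of Herglotz type, namely
\[
g(X) + g(Y)\ad - I = \sum_{i=1}^d u_i(Y)\ad \tensor{I}{I} u_i(X) - \sum_{i=1}^d u_i(Y)\ad\tensor{I}{Y_i\ad X_i} u_i(X),
\]
or more precisely the rearranged form that exhibits a relation of the shape $\theta(X)\ad \tensor{I}{H}\theta(X) = \ph(X)\ad\tensor{I}{H}\ph(X)$ to which Theorem \ref{lurk} (the lurking isometry argument for linear forms) applies. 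Applying that theorem yields an isometry, and combining it with the scalar normalization $h(0)=1$ (which forces the relevant vector to be a unit vector $V\ad$) lets one assemble the pieces into a single operator: the isometry from the lurking argument, together with the shift structure coming from $\delta(X) = \bigoplus_i \tensor{I}{X^i}$, is exactly what packages into the unitary $U$ on $\hilbert_d = \oplus_{i=1}^d \hilbert$ and the isometry $V: \C \to \hilbert_d$ of the statement. Finally I would expand the geometric series $(I - \tensor{U}{I}\delta(X))\inv = \sum_{n\geq 0}(\tensor{U}{I}\delta(X))^n$ for $\|X\|$ small to verify that the resulting free power series matches $g$, hence $h$, on a neighborhood of $0$, and conclude by the identity theorem for free functions that \eqref{herglotz} holds on all of $\B^d$.

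The main obstacle is the middle step: converting the positive-kernel condition into a clean Herglotz-type kernel identity with the correct "$I - U\delta(X)$" denominator structure, rather than the "$I - \sum Y_i\ad X_i$" structure that Theorems \ref{ncschur3} and \ref{ncschur2} produce directly. In the classical one-variable case this is the passage from the Schur class to the Herglotz/Carathéodory class via a Cayley transform on the function side; in several free variables one must additionally keep track of the $d$-fold grading so that the lurking isometry's source and target spaces can be organized into $\oplus_{i=1}^d \hilbert$ with a genuinely \emph{unitary} (not merely isometric) $U$. Ensuring unitarity typically requires a dilation step — enlarging $\hilbert$ if necessary — and checking that this enlargement does not disturb the normalization $h(0)=1$; this is where I would expect the technical bookkeeping to concentrate. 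Since Popescu's original proof in \cite{popescumajorant} already establishes the result, an alternative acceptable route is simply to cite that theorem and merely verify that our geometric formulation \eqref{herglotz}, phrased in the language of \cite{aty13}, is equivalent to Popescu's; I would present the lurking-isometry proof as the self-contained argument but note this equivalence.
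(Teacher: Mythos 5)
The paper itself does not reprove this theorem: it is stated with attribution to Popescu and cited to \cite{popescumajorant}, so there is no internal proof to check against. The route you sketch --- function-side Cayley transform $\phi=(h-1)(h+1)^{-1}$ to a free Schur function, Agler--{\mc}Carthy model (Theorem \ref{ncschur3}), then the lurking isometry argument (Theorem \ref{lurk}), then linear-algebraic elimination to solve for $h$ --- is in fact exactly the argument one would give with the tools in this paper, and your intermediate Herglotz-type identity $\frac12\bigl(h(X)+h(Y)\ad\bigr)=\sum_i u_i(Y)\ad\tensor{I}{I-Y_i\ad X_i}u_i(X)$ is correct: it drops out of the Schur identity after multiplying by $(h(Y)+I)\ad$ on the left and $(h(X)+I)$ on the right.

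Where your outline stops short is precisely where you flag "the main obstacle," but I would push back on how you frame it. Converting the $I-\sum Y_i\ad X_i$ structure into the $I-\tensor{U}{I}\delta(X)$ denominator is \emph{not} an obstacle requiring a new idea; it is the expected output of the lurking isometry argument applied to the rearranged Gramian
\[
\vectwo{h(Y)+I}{\delta(Y)u(Y)}\ad\vectwo{h(X)+I}{\delta(X)u(X)}
=\vectwo{h(Y)-I}{u(Y)}\ad\vectwo{h(X)-I}{u(X)},
\]
where $u=(u_1,\dots,u_d)$ is stacked into $\tensor{\hilbert_d}{\M}$ and $\delta(X)=\bigoplus_i\tensor{I}{X_i}$. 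Theorem \ref{lurk} then produces a colligation $L=\bbm A&B\\C&D\ebm$ on $\C\oplus\hilbert_d$, and solving the resulting two-equation system for $h$ (eliminating the auxiliary $u$) yields the representation with $U$ built as a Schur-type complement of $L$. Writing this out is mechanical; it is not a conceptual gap. Your second worry --- that the lurking isometry argument gives an isometry rather than a unitary --- is the one genuine gap in the outline, and it is not addressed by "a dilation step" as vaguely as you put it. The standard fix is to extend the isometry to a unitary on a possibly enlarged $\hilbert_d'\supseteq\hilbert_d$ (choose any unitary extension agreeing with the isometry on its initial space), after which one checks that $h(0)=1$ still forces $V$ to be an isometry since $V=C(I+A)^{-1}$ only depends on the first column of $L$ restricted to $\C$, which the extension does not change. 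You should say this explicitly rather than gesture at it.

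Finally, the closing move "verify by expanding the geometric series and invoke the identity theorem for free functions" is unnecessary if the algebra is done honestly: the realization formula is derived as an identity on all of $\B^d$, not just near $0$, so no continuation step is needed. As written, your proposal is a plausible outline whose two unresolved points (the explicit colligation/elimination, and the unitary extension) are both fillable by standard arguments; it is not yet a proof, and given that the paper simply cites Popescu, citing \cite{popescumajorant} as you suggest at the end is the economical and equally acceptable route.
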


\begin{corollary} \label{altherg}
If $h$ is a free Herglotz function on $\B^d$, then there exists a Hilbert space $\hilbert_d = \oplus_i \hilbert_i$, a real constant $a$, a unitary operator $L \in \B(\hilbert_d)$, and a vector $v \in \hilbert_d$ such that for all $X \in \B^d$, 
\beq  \label{altherg1}
h(X) = -i\tensor{a}{I} + \tensor{v\ad}{I}\left(\tensor{L}{I} - \delta(X)\right)\inv\left(\tensor{L}{I} + \delta(X)\right)\tensor{v}{I},
\eeq
where $\delta(X) = \bigoplus_{i=1}^d \tensor{I}{X_i}$.

Conversely, any $h$ defined by an equation of the form \eqref{altherg1} is a free Herglotz function.
\end{corollary}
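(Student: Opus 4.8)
The plan is to obtain \eqref{altherg1} from \eqref{herglotz} by a Cayley-type algebraic rearrangement, followed by an absorption of the scalar value $h(0)$. First I would reduce to the normalized case: given an arbitrary free Herglotz function $h$, note that $h(0) \in \Psi \cap \M_1 = \{w \in \C \mid \RE w > 0\}$, so we may write $h(0) = \RE h(0) + i\,\IM h(0)$ with $\RE h(0) > 0$. Set $a = -\IM h(0) \in \R$ and $g(X) = (\RE h(0))^{-1}\bigl(h(X) + i\tensor{a}{I}\bigr)$. Then $g$ is again a free holomorphic map $\B^d \to \Psi$ (since $\Psi$ is closed under addition of skew-adjoint scalars and under multiplication by positive scalars) and $g(0) = 1$. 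Apply the Popescu representation to $g$ to obtain a Hilbert space $\hilbert_d = \oplus_{i=1}^d \hilbert$, a unitary $U$, and an isometry $V:\C \to \hilbert_d$ with
\[
g(X) = \tensor{V\ad}{I}\bigl(I + \tensor{U}{I}\delta(X)\bigr)\bigl(I - \tensor{U}{I}\delta(X)\bigr)\inv\tensor{V}{I}.
\]

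Next I would massage the resolvent expression into the form appearing in \eqref{altherg1}. Writing $L = U\ad$ (still unitary) and multiplying inside by $\tensor{L}{I}$ on the appropriate side, one has
\[
\bigl(I + \tensor{U}{I}\delta(X)\bigr)\bigl(I - \tensor{U}{I}\delta(X)\bigr)\inv = \bigl(\tensor{L}{I} + \delta(X)\bigr)\bigl(\tensor{L}{I} - \delta(X)\bigr)\inv,
\]
which is valid because $\tensor{L}{I}$ commutes with itself and the algebraic identity $(I+UD)(I-UD)\inv = (U\ad + D)(U\ad - D)\inv$ holds whenever $U\ad - D$ (equivalently $I - UD$) is invertible; invertibility is guaranteed on $\B^d$ since $\|\tensor{U}{I}\delta(X)\| = \max_i \|X_i\| < 1$. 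Then conjugating the whole thing appropriately and setting $v = $ (the vector $\RE h(0)^{1/2}$ times the image of $1$ under $V$, i.e. $v = (\RE h(0))^{1/2} V(1) \in \hilbert_d$), we get
\[
h(X) = -i\tensor{a}{I} + \tensor{v\ad}{I}\bigl(\tensor{L}{I} - \delta(X)\bigr)\inv\bigl(\tensor{L}{I} + \delta(X)\bigr)\tensor{v}{I},
\]
using $\RE h(0)\cdot \tensor{V\ad}{I}(\cdots)\tensor{V}{I} = \tensor{v\ad}{I}(\cdots)\tensor{v}{I}$ and absorbing the scalar prefactor into $v$. One should double-check that the resolvent $(\tensor{L}{I}-\delta(X))\inv$ and the factor $(\tensor{L}{I}+\delta(X))$ commute — they do, since both are built from the single operator $\tensor{L}{I}$ and the single operator $\delta(X)$, and $\tensor{L}{I}(\tensor{L}{I}-\delta(X)) = (\tensor{L}{I} - \tensor{L}{I}\delta(X)\tensor{L\ad}{I}\cdot\tensor{L}{I})\tensor{L}{I}$; more simply, $(\tensor{L}{I}-\delta(X))\inv(\tensor{L}{I}+\delta(X)) = (\tensor{L}{I}+\delta(X))(\tensor{L}{I}-\delta(X))\inv$ because $p(\tensor{L}{I},\delta(X))$ for any two noncommuting symbols only requires that $\tensor{L}{I}+\delta(X)$ and $\tensor{L}{I}-\delta(X)$ commute, which follows from expanding both orders and cancelling. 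The splitting $\hilbert_d = \oplus_i \hilbert_i$ is just the given direct-sum structure of $\hilbert_d$.

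For the converse, suppose $h$ is defined by \eqref{altherg1} with $L$ unitary and $v\in\hilbert_d$ arbitrary. Since $-i\tensor{a}{I}$ contributes only a skew-adjoint scalar (which has zero real part), it suffices to show $\RE\bigl[\tensor{v\ad}{I}(\tensor{L}{I}-\delta(X))\inv(\tensor{L}{I}+\delta(X))\tensor{v}{I}\bigr] \ge 0$ for all $X\in\B^d$. Reversing the algebra above, set $w = \tensor{L}{I}\tensor{v}{I}$ (or renormalize $v$) and $D = \tensor{L\ad}{I}\delta(X)$, a strict contraction; then the expression equals $\tensor{w\ad}{}(I+D)(I-D)\inv\tensor{w}{}$ after suitable conjugation by the unitary $\tensor{L}{I}$, and the real part of $(I+D)(I-D)\inv$ is
\[
\half\Bigl[(I+D)(I-D)\inv + (I-D\ad)\inv(I+D\ad)\Bigr] = (I-D\ad)\inv\bigl(I - D\ad D\bigr)(I-D)\inv \ge 0
\]
since $I - D\ad D \ge 0$ for a contraction $D$; this is the standard Cayley-transform positivity computation. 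Hence $\RE h(X) \ge 0$ and $h$ maps $\B^d$ into $\Psi$, and $h$ is manifestly a free holomorphic (free rational, singularity-free on $\B^d$) function, so it is a free Herglotz function.

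The main obstacle I anticipate is purely bookkeeping: keeping the vertical-tensor conjugations by $\tensor{L}{I}$ straight so that the unitary gets absorbed correctly and the vector $v$ ends up on the correct side, and making sure the normalization constant $\RE h(0)$ is folded into $v$ rather than left as a stray scalar. None of these steps is deep — the content is entirely the Cayley transform relating $\RE\ge 0$ to the contractive picture — but the noncommutative/graded setting means one must verify that every identity used ($(I+UD)(I-UD)\inv = (U\ad+D)(U\ad - D)\inv$, the commutation of $\tensor{L}{I}\pm\delta(X)$, the real-part formula) survives the functional calculus, which it does because all operators involved are generated by the two symbols $\tensor{L}{I}$ and $\delta(X)$ together with $X$-independent unitaries.
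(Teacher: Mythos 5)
Your approach is the right one and yields the corollary; since the paper leaves the proof to the reader (the proof block is commented out in the source), there is no competing paper argument to compare against — the intended derivation is precisely what you sketch: normalize $h$ to $h(0)=1$, apply Popescu's representation, algebraically rearrange, and reabsorb the scalar normalization into $v$ and $a$. The normalization step (subtract $i\,\IM h(0)$, divide by $\RE h(0) > 0$) and the reverse Cayley positivity computation in the converse are both correct.

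There is, however, a genuine algebraic slip in the intermediate identity you display. You assert
\[
\bigl(I + \tensor{U}{I}\delta(X)\bigr)\bigl(I - \tensor{U}{I}\delta(X)\bigr)\inv = \bigl(\tensor{L}{I} + \delta(X)\bigr)\bigl(\tensor{L}{I} - \delta(X)\bigr)\inv,
\]
with $L=U\ad$, but this is false. Factoring $I \pm UD = U(U\ad \pm D)$ gives
\[
(I+UD)(I-UD)\inv = U(U\ad + D)(U\ad - D)\inv U\ad,
\]
which is not $(U\ad+D)(U\ad-D)\inv$. What you should use instead is the fact that $I+UD$ and $I-UD$ commute (both being polynomials in the single operator $UD$), so that
\[
(I+UD)(I-UD)\inv = (I-UD)\inv(I+UD) = (U\ad-D)\inv(U\ad+D) = \bigl(\tensor{L}{I}-\delta(X)\bigr)\inv\bigl(\tensor{L}{I}+\delta(X)\bigr),
\]
which is exactly the inverse-first ordering appearing in \eqref{altherg1}. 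Your claimed commutativity of $\tensor{L}{I}+\delta(X)$ with $\tensor{L}{I}-\delta(X)$ is likewise false in general (expanding gives $\pm(L\delta(X)-\delta(X)L)$, which vanishes only if $L$ and $\delta(X)$ commute); this spurious claim is not actually needed, because the target formula fixes the order. Since your final displayed formula for $h$ already has the correct $(\tensor{L}{I}-\delta(X))\inv(\tensor{L}{I}+\delta(X))$ ordering, these are expository slips rather than fatal gaps, but they should be corrected so the argument reads cleanly.
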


Connections between Pick functions, Herglotz functions, and Schur functions are given by the \emph{Cayley transform}. The particular Cayley transform given by
\[
	\la = \frac{z - 1}{z + 1}, \,\, z = \frac{1 + \la}{1 - \la}
\]
is a conformal mapping between the right half-plane and the disk. G. Popescu showed that the noncommutative Cayley transform is a well-defined bijection between free Herglotz functions and free Schur functions in \cite[Section 1]{po08}. Via the Cayley transform, we will use representation of free Herglotz functions to derive our Nevanlinna representations.
\subsection{The structured resolvents}

The expression $f(z) = (t-z)\inv$ in Theorem \ref{NevanlinnaRep} suggests the resolvent operator
$R(Z) = (A - z)\inv$. We will present, in the following section, four representations based on the following \emph{structured resolvents} (properties of structured resolvents are discussed at length in \cite{aty13}).

\begin{definition}
Let $\hilbert$ be a Hilbert space. A \emph{positive decomposition} $Y$ of $\hilbert$ is a collection of positive operators $Y_1, \ldots, Y_d$ summing to $I.$ An \emph{orthogonal decomposition} $P$ of $\hilbert$ is a collection of pairwise orthogonal projections $P_1, \ldots, P_d$ on $\hilbert$ summing to $I.$
\end{definition}

\begin{definition}[structured resolvents]\hfill
\begin{description}
\item[Type 2] \label{type2res}
Let $A$ be a closed densely defined self-adjoint operator on a Hilbert space $\hilbert$ and let $Y$ be a positive decomposition of $\hilbert$. The \emph{structured resolvent of type $2$} corresponding to $Y$ is the function from $\Pi^d \to \tensor{\B(\hilbert)}{\M}$ given by
\beq \label{type2formula}
	M_2(Z) =  \left(\tensor{A}{I} - \delta_Y(Z)\right)\inv,
\eeq
where 
	$$\delta_Y(Z) = \sum_{i=1}^d \tensor{Y_i}{Z_i}.$$
\item[Type 3] \label{type3res}
Let $A$ be a closed, densely defined, self-adjoint operator on a Hilbert space $\hilbert$, and let $Y$ be positive decomposition of $\hilbert$. The \emph{structured resolvent of type $3$} corresponding to $Y$ is the function from $\Pi^d \to \tensor{\B(\hilbert)}{\M}$ given by
\beq \label{type3formula}
M_3(Z) = \tensor{(I - iA)}{I}\left(\tensor{A}{I} - \delta_Y(Z)\right)\inv \left(I + \delta_Y(Z)\tensor{A}{I}\right) \tensor{(I - iA)\inv}{I},
\eeq
where 
	$$\delta_Y(Z) = \sum_i \tensor{Y_i}{Z_i}.$$
\item[Type 4] \label{type4res}
Let $\hilbert$ be a Hilbert space decomposed orthogonally as $\hilbert = \N \oplus \K$. Let $A$ be a densely defined, self-adjoint operator on $\K$ with domain $\mathcal D(A)$, and let $P$ be an orthogonal decomposition of $\hilbert$. The structured resolvent of type $4$ resolvent is a function from $\Pi^d \to \tensor{\B(\hilbert)}{\M}$ of the form
\begin{align}
M_4(Z) &= \tensor{\bbm -i & 0 \\ 0 & I - iA \ebm}{I}\left( \tensor{\bbm I & 0 \\ 0 & A \ebm}{I} -\delta_P(Z)\tensor{\bbm 0 & 0 \\ 0 & I \ebm}{I} \right)\inv \times \notag \\ 
&\hspace{1.5in} \left(\delta_P(Z) \tensor{\bbm I & 0 \\ 0 & A \ebm}{I} + \tensor{\bbm 0 & 0 \\ 0& I \ebm}{I}\right)\tensor{\bbm -i & 0 \\ 0 & I-iA \ebm\inv}{I}, \label{type4formula}
\end{align}
where 
	$$\delta_P(Z) = \sum_i \tensor{P_i}{Z_i}.$$
\end{description}

For each $2 \leq i \leq 4$ and each $Z \in \Pi^d$, the expression $M_i(Z)$ is a bounded operator and 
has positive imaginary part, which follows directly from proofs given in \cite{aty13}.
\end{definition}

With the structured resolvents \eqref{type2res}, \eqref{type3res}, and \eqref{type4formula}, we now present representations for free Pick functions that generalize the classical Nevanlinna representations given in Theorems \ref{NevanlinnaRep} and\ref{NevanlinnaRepComp}. 

Recall that the free Pick class $\Pick_d$ consists of analytic functions $h: \Pi^d \to \cc{\Pi^1}$.
\begin{definition}
The following are the representations for functions in $\Pick^d.$
\begin{description}
	\item[Type 4] A Nevanlinna representation of type $4$ of a function $h$ in the free Pick class on $\M^d$ 
	is
\[
h(z) = \tensor{a}{I} + \tensor{v^*}{I} M_4(Z) \tensor{v}{I},
\]
where $M_4(Z)$ is a type $4$ structured resolvent as given in Definition \ref{type4res} with associated Hilbert space $\hilbert$, $v$ is a vector in $\hilbert$,  and $a \in \R$.
	\item[Type 3] A Nevanlinna representation of type $3$ of a function $h$ in the free Pick class on $\M^d$ 
	is
\[
h(z) = \tensor{a}{I} + \tensor{v^*}{I} M_3(Z) \tensor{v}{I},
\]
where $M_3(Z)$ is a type $3$ structured resolvent as given in Definition \ref{type3res} with associated Hilbert space $\hilbert$, $v$ is a vector in $\hilbert$,  and $a \in \R$.
	\item[Type 2]
	A Nevanlinna representation of type $2$ of a function $h$ in the free Pick class on $\M^d$ 
	is
\[
h(Z) = \tensor{a}{I} + \tensor{v^*}{I} M_2(Z) \tensor{v}{I},
\]
where $M_2(Z)$ is a type $2$ structured resolvent as given in Definition \ref{type2res} with associated Hilbert space $\hilbert$, $v$ is a vector in $\hilbert$,   and $a \in \R$.
	\item[Type 1] A Nevanlinna representation of type $1$ of a function $h$ in the free Pick class on $\M^d$ 
	is
\[
h(Z) = \tensor{v^*}{I} M_2(Z) \tensor{v}{I},
\]
where $M_2(Z)$ is a type $2$ structured resolvent as given in Definition \ref{type2res} with associated vector space $\hilbert$, and $v$ is a vector in $\hilbert$.
\end{description}
\end{definition}

In \cite[Section 6]{aty13}, one of the authors with Agler and Young discussed the connections between asymptotic behavior of a  function $h$ in $\mathcal L_n$ along the upper imaginary polyaxis and the existence of representations of a given type for $h$. It turns out that those results lift to results about functions in the free Pick class $\Pick_d$. In fact, the structure of free Pick functions is determined by their behavior on the first level of $\Pi^d$, that is, $d$-tuples of complex numbers. In the following discussion, we will denote by 
	$$\chi = (1, \ldots, 1)$$ 
the element of $\M^d_1$ consisting of all ones. When evaluating a function $h$ on the ray 
$$is\chi = (is, is, \ldots, is) \in \M_1^d,$$ 
we will make the identification $\tensor{\C}{\M_1} \cong \C$. Every $h \in \Pick_d$ has a representation of type $4$.
We show the following analogue of Theorem \ref{oldnevreps}.
\begin{theorem} \label{newnevrep}
The following criteria characterize representations for functions in the free Pick class $\Pick_d.$
Let $h$ be a function defined on $\Pi^d$.
	\begin{description}
		\item[Type 4] \label{type4rep} The following are equivalent:
			\begin{enumerate}
				\item The function $h$ has a representation of type $4$.
				\item The function $h$ is a free Pick function.
			\end{enumerate}
		\item[Type 3] \label{type3asym} The following are equivalent:
			\begin{enumerate}
				\item The function $h$ has a Nevanlinna representation of type $3$;
				\item The function $h$ is a Pick function such that
				\beq
					\liminf_{s \to \infty} \frac{1}{s}\IM h(is\chi) = 0;
				\eeq
				\item The function $h$ is a Pick function such that
				\beq
				\lim_{s \to \infty} \frac{1}{s} \IM h(is\chi) = 0.
				\eeq
			\end{enumerate}
		\item[Type 2] \label{type2asym} The following are equivalent:
			\begin{enumerate}
				\item The function $h$ has a Nevanlinna representation of type $2$;
				\item The function $h$ is a Pick function such that
				\beq
					\liminf_{s \to \infty} s \IM h(is\chi) \leq \infty.
				\eeq
				\item The function $h$ is a Pick function such that
				\beq
				\lim_{s \to \infty} s \IM h(is\chi)
				\eeq
				exists and
				\beq
				\lim_{s \to \infty} s \IM h(is\chi) \leq \infty.
				\eeq
			\end{enumerate}
		\item[Type 1] \label{type1asym} The following are equivalent:
							\begin{enumerate}
				\item The function $h$ has a Nevanlinna representation of type $1$;
				\item The function $h$ is a Pick function such that
				\beq
					\liminf_{s \to \infty} |s h(is\chi)| \leq \infty.
				\eeq
				\item The function $h$ is a Pick function such that
				\beq
				\lim_{s \to \infty} |s h(is\chi)|
				\eeq
				exists and
				\beq
				\lim_{s \to \infty} |s h(is\chi)| \leq \infty.
				\eeq
			\end{enumerate}
	\end{description}
\end{theorem}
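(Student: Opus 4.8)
The cornerstone is the Type 4 representation, which holds for \emph{every} free Pick function and from which the other three types are obtained by operator-theoretic refinement. The organizing observation is that evaluating any of the four representations on the diagonal ray $is\chi = (is,\dots,is) \in \M_1^d$ collapses it: since every positive or orthogonal decomposition satisfies $\sum_i Y_i = I$, we have $\delta_Y(is\chi) = is\,I$, so $M_k(is\chi)$ reduces to the resolvent of a single fixed operator paired against a fixed vector, and $h(is\chi)$ is literally the scalar quantity produced by the one-dimensional reduction already analyzed in \cite{aty13}. Thus every asymptotic clause in the theorem becomes one handled there. The plan is: (i) prove the Type 4 equivalence via Popescu's free Herglotz representation and the Cayley transform; (ii) for Types 1--3, prove $(1)\Rightarrow(2)$ and the stronger $(1)\Rightarrow(3)$ by direct evaluation on $is\chi$; (iii) prove $(2)\Rightarrow(1)$ for Types 1--3 by refining the Type 4 representation from (i) under the liminf hypothesis; $(3)\Rightarrow(2)$ is immediate.

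For Type 4, the direction $(1)\Rightarrow(2)$ is routine: a representation $h(Z)=\tensor{a}{I}+\tensor{v^*}{I}M_4(Z)\tensor{v}{I}$ is a free function since it is assembled from operators in the extrinsic slot tensored with analytic functions of $Z$ in the intrinsic slot, so direct sums and similarities are respected automatically; it is analytic because its Cayley transform is a bounded free function; and $\IM h(Z)=\tensor{v^*}{I}\,\IM M_4(Z)\,\tensor{v}{I}\geq 0$ on all of $\Pi^d$ by the boundedness and positive-imaginary-part properties of $M_4$ stated with Definition \ref{type4res}. For $(2)\Rightarrow(1)$ I would Cayley-transform $h$ into a free Herglotz function on $\B^d$, apply Corollary \ref{altherg} --- whose data $\hilbert_d=\bigoplus_{i=1}^d\hilbert$ and $\delta(X)=\bigoplus_i\tensor{I}{X_i}$ already constitute an orthogonal decomposition in the sense of the structured resolvents --- and then carry out the spectral-theoretic rearrangement of \cite{aty13}, splitting the unitary into its ``finite'' and ``infinite'' parts to manufacture the $\N\oplus\K$ block structure and the self-adjoint operator $A$, thereby putting the model into the precise shape \eqref{type4formula} with the intrinsic tensor leg carried along unchanged.

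For Types 1, 2, 3, suppose first that $h$ has a type $k$ representation with data $(a,v,\hilbert,A,Y)$, where $a=0$ for type 1. Evaluating on $is\chi$, the identifications $\tensor{\C}{\M_1}\cong\C$ and $\delta_Y(is\chi)=is\,I$ reduce $M_k(is\chi)$ to the scalar-parametrized resolvent of \cite{aty13}; the spectral theorem then gives $\lim_{s\to\infty}s\,\IM h(is\chi)=\norm{v}^2$ in the type 2 case, $\lim_{s\to\infty}\tfrac1s\IM h(is\chi)=0$ in the type 3 case, and the analogous limits for types 1 and 4, which proves $(1)\Rightarrow(2)$ and $(1)\Rightarrow(3)$ at once. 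Conversely, if $h$ is a free Pick function satisfying the type $k$ liminf condition, then by (i) it has a Type 4 representation; evaluated on $is\chi$ this reproduces exactly the scalar function $h_0(is,\dots,is)$ with $h_0=h|_{\Pi_1^d}$, so the liminf hypothesis is verbatim the hypothesis of the matching clause of Theorem \ref{oldnevreps}. The refinement of the Type 4 resolvent into a Type $k$ resolvent carried out in \cite{aty13} --- decomposing $v$ against the block structure of $M_4$, showing the relevant component vanishes, and collapsing the resolvent accordingly --- is a purely operator-theoretic argument about $A$, $v$, and the decomposition, so it applies to our data verbatim; tensoring with $I$ on the intrinsic slot yields the Type $k$ representation of $h$ itself. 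Finally $(3)\Rightarrow(2)$ is trivial, closing each cycle of equivalences.

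I expect the main obstacle to be bookkeeping rather than new mathematics: faithfully transcribing both the Herglotz-to-resolvent rearrangement and the Section 6 refinement arguments of \cite{aty13} into the vertical-tensor formalism, and checking at each step that the operators produced actually lie in $\tensor{\B(\hilbert)}{\M}$ with the boundedness and positive-imaginary-part properties needed for the structured resolvents to be well defined on all of $\Pi^d$ --- properties asserted to follow from \cite{aty13} but which must be re-examined in the presence of the extra tensor factor. A secondary subtlety is that the ``liminf $\Rightarrow$ lim'' strengthenings in Types 2 and 3 are obtained only indirectly, by chaining $(2)\Rightarrow(1)\Rightarrow(3)$.
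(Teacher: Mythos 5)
Your proposal matches the paper's proof in both structure and substance. The paper proves Type 4 exactly as you describe — Cayley transform to the free Herglotz class, Popescu's representation (Corollary~\ref{altherg}), spectral split of the unitary into $\N = \ker(I-L)$ and its complement, and algebraic rearrangement into the form~\eqref{type4formula}; the converse uses the positive imaginary part of $M_4$. For Types 1--3 the paper, like you, closes each cycle via $(1)\Rightarrow(3)$ (or $(1)\Rightarrow(2)$) by appealing to Theorem~\ref{oldnevreps} after noting that $\delta_Y(is\chi) = is\, I$ collapses every structured resolvent to the scalar resolvent of~\cite{aty13}, takes $(3)\Rightarrow(2)$ as trivial, and for $(2)\Rightarrow(1)$ refines the higher-type representation (Type 4 for Type 3, Type 3 for Type 2, Type 2 for Type 1) by the same vanishing-component and domain-of-$A$ arguments you outline; your ``refining Type 4'' encompasses this chain.
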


The representations have some geometric relationships. Type $3$ representations are the restrictions of type $4$ representations onto subspaces. Type $1$ representations are a special case of type $2$. We can complete the hierarchical description by considering the connection between representations of type $3$ and type $2$. Proofs of the following propositions use the same arguments as in Propositions 5.3 and 5.5 in \cite{aty13}.

\begin{proposition} \label{type32}
If a free Pick function $h$ has a type $2$ representation, then it has  type $3$ representation. Conversely, if a free Pick function $h$ has a type $3$ representation and in addition the vector $v \in \mathcal D(A)$, then $h$ has a type $2$ representation. 
\end{proposition}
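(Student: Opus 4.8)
The plan is to follow the commutative arguments of Agler, {\mc}Carthy and Young (\cite[Propositions 5.3 and 5.5]{aty13}), which transfer \emph{mutatis mutandis} to the free setting because every manipulation below takes place in the extrinsic (operator) slot and hence commutes with the intrinsic variable $Z$. The one algebraic fact driving both implications is the identity
\[
I + \delta_Y(Z)\tensor{A}{I} = \tensor{(I + A^2)}{I} - \left(\tensor{A}{I} - \delta_Y(Z)\right)\tensor{A}{I},
\]
valid on the domain of $\tensor{A}{I}$, together with the factorization $I + A^2 = (I - iA)(I + iA) = (I + iA)(I - iA)$. Substituting the identity into the defining formula \eqref{type3formula} for $M_3$ and collapsing the functions of $A$ (which commute with one another) yields the clean relation
\[
M_3(Z) = \tensor{(I - iA)}{I}\left(\tensor{A}{I} - \delta_Y(Z)\right)\inv\tensor{(I + iA)}{I} - \tensor{A}{I},
\]
which displays $M_3$ as a conjugated, shifted copy of $M_2$.

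\textbf{Type $2\Rightarrow$ type $3$.} Suppose $h(Z) = \tensor{a}{I} + \tensor{v^*}{I}M_2(Z)\tensor{v}{I}$ with $M_2(Z) = \left(\tensor{A}{I} - \delta_Y(Z)\right)\inv$. Keep $A$ and $Y$ and set $\tilde v = (I + iA)\inv v$. Since $A$ is self-adjoint, $(I + iA)\inv$ is bounded with range $\mathcal D(A)$, so $\tilde v$ is automatically a well-defined vector lying in $\mathcal D(A)$ and no hypothesis on $v$ is needed. Using $(I + iA)\tilde v = v$, the relation $\tilde v^*(I - iA) = \big((I + iA)\tilde v\big)^* = v^*$, and $\tilde v^* A \tilde v = \langle A(I + A^2)\inv v, v\rangle \in \R$ (the operator $A(I + A^2)\inv$ being bounded and self-adjoint), the displayed relation for $M_3$ gives
\[
\tensor{\tilde v^*}{I}M_3(Z)\tensor{\tilde v}{I} = \tensor{v^*}{I}M_2(Z)\tensor{v}{I} - \langle A(I + A^2)\inv v, v\rangle,
\]
so $h(Z) = \tensor{a + \langle A(I + A^2)\inv v, v\rangle}{I} + \tensor{\tilde v^*}{I}M_3(Z)\tensor{\tilde v}{I}$ is a type $3$ representation.

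\textbf{Type $3\Rightarrow$ type $2$ when $v\in\mathcal D(A)$.} Now $w := (I + iA)v$ is a genuine element of $\hilbert$ and $\langle Av, v\rangle$ is a well-defined real number --- and it is exactly here that the hypothesis $v \in \mathcal D(A)$ is used. Feeding $v$ into the same relation for $M_3$ yields $\tensor{v^*}{I}M_3(Z)\tensor{v}{I} = \tensor{w^*}{I}M_2(Z)\tensor{w}{I} - \langle Av, v\rangle$ with the same $A$ and $Y$, whence $h(Z) = \tensor{a - \langle Av, v\rangle}{I} + \tensor{w^*}{I}M_2(Z)\tensor{w}{I}$ is a type $2$ representation.

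In both directions the closed densely defined self-adjoint operator $A$ and the positive decomposition $Y$ are unchanged, so the expressions produced are bona fide structured resolvents and the boundedness and positive-imaginary-part properties recorded after Definition \ref{type4res} are inherited verbatim from \cite{aty13}. The only point requiring genuine care is the bookkeeping of the domain of the unbounded operator $A$ --- that $(I + iA)\inv$ carries $\hilbert$ onto $\mathcal D(A)$ in the first implication, and that $v \in \mathcal D(A)$ is precisely what makes $(I + iA)v$ and $\langle Av, v\rangle$ meaningful in the converse --- together with the (elementary) verification that the two additive constants are real; I do not anticipate any serious obstacle beyond this.
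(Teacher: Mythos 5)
Your proof is correct. Note that the paper itself does not reproduce a proof of this proposition---it simply remarks that the argument is the same as in Propositions 5.3 and 5.5 of \cite{aty13}---so there is no line-by-line comparison to make, but your computation is exactly the kind of algebraic rearrangement that the cited reference carries out in the commutative setting.

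The key identity
\[
I + \delta_Y(Z)\tensor{A}{I} = \tensor{(I + A^2)}{I} - \left(\tensor{A}{I} - \delta_Y(Z)\right)\tensor{A}{I}
\]
checks out (expand the right-hand side), and the resulting rewriting
\[
M_3(Z) = \tensor{(I - iA)}{I}\left(\tensor{A}{I} - \delta_Y(Z)\right)\inv\tensor{(I + iA)}{I} - \tensor{A}{I}
\]
holds as an identity on $\tensor{\mathcal D(A)}{\M}$, which is precisely where you apply it: $\tilde v = (I+iA)\inv v$ lies in $\mathcal D(A)$ automatically in the forward direction, and $v \in \mathcal D(A)$ is assumed in the converse. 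The realness of the additive constants $\langle A(I+A^2)\inv v, v\rangle$ and $\langle Av, v\rangle$ follows from self-adjointness of the relevant operators, as you note, and since you keep the same $A$ and $Y$, the new expression is a legitimate structured resolvent of the target type. Your domain bookkeeping is accurate; in particular, your observation that the hypothesis $v \in \mathcal D(A)$ is used only (and precisely) to make $(I+iA)v$ and $\langle Av, v\rangle$ meaningful pins down why the converse requires it while the forward direction does not. The argument is complete.
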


We now begin the proof of Theorem \ref{type4rep}. We essentially follow \cite{aty13}.

\subsection{Proof of Theorem \ref{type4rep}}
\subsubsection{Type 4}
\begin{proof}[Proof of \ref{type4rep}: Type 4]
We use a Cayley transform to connect the free Pick class to the free Herglotz class. The Cayley transform between the disc and the right halfplane is given by
\[
z = i \frac{1+\la}{1 - \la}, \, \, \la = \frac{z-i}{z+i}, 
\]
for $z \in \Pi$, $\la \in \D$. For a given Pick function $f$, define a Herglotz function $h$ for $X \in \B^d$ by
\[
h(X) = -if(Z),
\]
where $Z$ is the coordinate-wise Cayley transform of $X$, i.e.
\[
Z_i = i (I - X_i)\inv(I + X_i).
\]

Let $f$ be a free Pick function, and let $h$ be the associated Herglotz function. Then by Corollary \ref{altherg},
\begin{align*}
f(Z) &= ih(X) \\
&= \tensor{a}{I} + i \tensor{v\ad}{I}\left(\tensor{L}{I} - \delta_P(X)\right)\inv\left(\tensor{L}{I} + \delta_P(X)\right)\tensor{v}{I} \\
&= \tensor{a}{I} + i \tensor{v^*}{I}\left(\tensor{L}{I} - \delta_P((Z +i)\inv(Z - i))\right)\inv \times \\
& \hspace{1.5in} \left(\tensor{L}{I} + \delta_P((Z +i)\inv(Z - i))\right)\tensor{v}{I} \\
&= \tensor{a}{I} + i \tensor{v^*}{I}\left(\tensor{L}{I} - \delta_P((Z +i)\inv)\delta_P((Z - i))\right)\inv \times \\
& \hspace{1.5in}\left(\tensor{L}{I} + \delta_P((Z +i)\inv)\delta_P((Z - i))\right)\tensor{v}{I},
\end{align*}
where 
	$$\delta_P(Z) = \sum_i \tensor{P_i}{Z_i}.$$
Now let $M(Z)$ be the expression
\begin{align*}
&M(Z) = i\left(\tensor{L}{I} - \delta_P((Z +i)\inv)\delta_P((Z - i))\right)\inv \times \\
& \hspace{1.5in} \left(\tensor{L}{I} + \delta_P((Z +i)\inv)\delta_P((Z - i))\right).
\end{align*}
With the notation $P_i = P_{\hilbert_i}$, the operator $M(Z)$ can be written
\begin{align*}
&M(Z) = i\left[\delta_P((Z+i)\inv)\left(\delta_P(Z + i)\tensor{L}{I} -\delta_P((Z - i))\right)\right]\inv  \times \\ &\hspace{1in}\delta_P((Z+i)\inv)\left(\delta_P(T + i)\tensor{L}{I} + \delta_P((Z - i))\right) \\
&= i\left(\delta_P(Z + i)\tensor{L}{I} -\delta_P((Z - i))\right)\inv \left(\delta_P(Z + i)\tensor{L}{I} + \delta_P((Z - i))\right) \\
&= i\left(\sum_i\tensor{P_i}{Z_i + i}\sum_i \tensor{P_i L}{I} -\sum_i\tensor{P_i}{Z_i- i}\right)\inv \times \\
&= \hspace{1.5in} \left(\sum_i\tensor{P_i}{Z_i + i}\sum_i\tensor{P_i L}{I} + \sum_i\tensor{P_i}{Z_i - i}\right) \\
&= i\left(\sum_i\tensor{P_i L}{Z_i + i} -\sum_i\tensor{P_i}{Z_i- i}\right)\inv \left(\sum_i\tensor{P_i L}{Z_i + i}+ \sum_i\tensor{P_i}{Z_i - i}\right) \\
&= i\left(\sum_i\tensor{P_i (L - I)}{Z_i} -\sum_i\tensor{P_i(L + I)}{i}\right)\inv \left(\sum_i\tensor{P_i (L + I)}{Z_i}+ \sum_i\tensor{P_i (L - I)}{i}\right) \\
&= i\left(\delta_P(Z)\tensor{L - I}{I} -i\tensor{L + I}{I}\right)\inv \left(\delta_P(Z)\tensor{L + I}{I}+ i\tensor{L - I}{I}\right). \\
\end{align*}

Let $\N = \ker(I - L)$. Decompose $L$ according to $\hilbert = \N \oplus \K$, where $\K = \N^\perp$, so that 
\[
L = \begin{bmatrix} I & 0 \\ 0 & L_0 \end{bmatrix} \begin{array}{c} \N \\ \K \end{array}
\]
where $L_0$ is unitary and $\ker(I - L_0) = \{0\}$.

Then
\begin{align}
M(Z) &= i\left(\delta_P(Z)\tensor{\bbm 0 & 0 \\ 0 & L_0 - I \ebm}{I} + \tensor{i\bbm 2 & 0 \\ 0 & L_0 + I \ebm}{I}\right)\inv \times \notag \\
& \hspace{1in} \left(\delta_P(Z)\tensor{\bbm 2 & 0 \\ 0 & I + L_0 \ebm}{I}+ \tensor{i\bbm 0 & 0 \\ 0 & L_0 + I \ebm}{I}\right) \notag \\
&= \left(-\delta_P(Z)\tensor{\bbm 0 & 0 \\ 0 & I- L_0 \ebm}{I} + \tensor{\bbm 2i & 0 \\ 0 & i(I+L_0) \ebm}{I}\right)\inv \notag \times \\ &\hspace{1in} \left(\delta_P(Z)\tensor{\bbm 2i & 0 \\ 0 & i(I + L_0) \ebm}{I}+ \tensor{\bbm 0 & 0 \\ 0 & I - L_0 \ebm}{I}\right). \label{prefactor}
\end{align}
We would like to continue by writing
\begin{align}
M(Z) &= \tensor{\bbm -\frac{1}{2}i & 0 \\ 0 & (I - L_0)\inv \ebm}{I}\left(-\delta_P(Z)\tensor{\bbm 0 & 0 \\ 0 & I \ebm}{I} + \tensor{\bbm I & 0 \\ 0 & i\frac{I + L_0}{I - L_0} \ebm}{I}\right)\inv \times \notag \\ 
&\hspace{1in} \left(\delta_P(Z) \tensor{\bbm I & 0 \\ 0 & i\frac{I+L_0}{I - L_0} \ebm}{I} + \tensor{\bbm 0 & 0 \\ 0& I \ebm}{I}\right)\tensor{\bbm 2i & 0 \\ 0 & I- L_0 \ebm}{I}, \label{postfactor}
\end{align}
but to do so, we need to show that the unbounded, partially defined operator above makes sense. Let 
\[
A = i\frac{I + L_0}{I - L_0}.
\]
$A$ is self-adjoint and densely defined on $\K$ as $L_0$ is unitary on $\K$ and $\ker (1-L_0) = \{0\}$  \cite[Section 22]{rienag}. Let $\mathcal D(A)$ be the domain of $A$. Then $\mathcal D(A)$ is the dense subspace $\ran(I - L_0)$ of $\K$. Then by the definition of $A$, 
\[
(I - L_0)\inv = \frac{1}{2}(I - iA),
\]
which is an equation between bijective operators from $\mathcal D(A) \to \M$. Likewise, the equation
\[
I + L_0 = -2iA(I - iA)\inv
\]
relates bounded operators from $\mathcal D(A) \to \M$.

We will now justify factoring the expression in \eqref{prefactor}. Since $\ker(I - L_0) = \{0\}$, 
\[
\bbm 2i & 0 \\ 0 & I - L_0 \ebm
\]
is a bijection between $\hilbert$ and $\N \oplus \mathcal D(A)$. Now decompose the $P_i$ with respect to $\hilbert = \N \oplus \K$, so that 
for each $i = 1, \ldots, d$,
\[
P_i = \bbm X_i & B_i \\ B_i\ad & Y_i \ebm,
\]
which allows us to write $\delta_P(Z)$ as
\[
\delta_P(Z) = \sum_{i = 1}^{d} \tensor{P_i}{Z_i} = \sum_i \tensor{\bbm X_i & B_i \\ B_i\ad & Y_i \ebm}{Z_i}.
\]
Then 
\begin{align}
 &\left( -\delta_P(Z) \tensor{\bbm 0 & 0 \\ 0 & I \ebm}{I} + \tensor{\bbm I & 0 \\ 0 & A \ebm}{I}\right)\inv =  \left[-\left(\sum_i\tensor{P_i}{Z_i}\right)\tensor{\bbm 0 & 0 \\ 0 & I \ebm}{I} + \tensor{\bbm I & 0 \\ 0 & A \ebm}{I}\right]\inv \notag \\
&= \left[-\left(\sum_i\tensor{\bbm X_i & B_i \\ B_i\ad & Y_i \ebm}{Z_i}\right)\tensor{\bbm 0 & 0 \\ 0 & I \ebm}{I} + \tensor{\bbm I & 0 \\ 0 & A \ebm}{I}\right]\inv \notag \\
&= \left[-\left(\sum_i \tensor{\bbm 0 & B_i \\ 0 & Y_i \ebm}{Z_i}\right) + \tensor{\bbm I & 0 \\ 0 & A \ebm}{I}\right]\inv \notag \\ 
&= \bbm I & -\sum_i\tensor{B_i}{Z_i} \\ 0 & \tensor{A}{I} - \sum_i\tensor{Y_i}{Z_i} \ebm\inv \notag \\
&= \bbm I & -\sum_i\tensor{B_i}{Z_i} \left(\tensor{A}{I} - \delta_Y(Z)\right)\inv \\ 0 & \left(\tensor{A}{I} - \delta_Y(Z)\right)\inv \ebm. \label{leftfactorinverse}
\end{align}
That $g(Z) = \left(\tensor{A}{I} - \delta_Y(Z)\right)\inv$ is a well-defined function bounded on $\mathcal D(A)$ follows by an argument similar to that in \cite{aty13}. Thus 
\begin{align}
&\left( -\delta_P(Z) \tensor{\bbm 0 & 0 \\ 0 & I \ebm}{I} + \tensor{\bbm I & 0 \\ 0 & A \ebm}{I}\right)\inv \notag \\
&= \tensor{\bbm -\frac{1}{2}i & 0 \\ 0 & (I - L_0)\inv \ebm}{I}\left(-\delta_P(Z)\tensor{\bbm 0 & 0 \\ 0 & I \ebm}{I} + \tensor{\bbm I & 0 \\ 0 & i\frac{I + L_0}{I - L_0} \ebm}{I}\right)\inv \notag \\
&= \tensor{\bbm -\frac{1}{2}i & 0 \\ 0 & \frac{1}{2}(I - iA) \ebm}{I}\left(\tensor{\bbm I & 0 \\ 0 & A \ebm}{I} - \delta_P(Z)\tensor{\bbm 0 & 0 \\ 0 & I \ebm}{I} \right)\inv \label{left}
\end{align}
is also a bijection from $\N \oplus \mathcal D(A) \to \hilbert$, and so we can apply inverses to the left factor of the right-hand side of \eqref{prefactor}. Similar reasoning allows us to conclude that 
\begin{align} 
&\left(\delta_P(Z)\tensor{\bbm 2i & 0 \\ 0 & i(I + L_0) \ebm}{I}+ \tensor{\bbm 0 & 0 \\ 0 & I - L_0 \ebm}{I}\right)  \notag \\
&= \left(\delta(Z) \tensor{\bbm I & 0 \\ 0 & i\frac{I+L_0}{I - L_0} \ebm}{I} + \tensor{\bbm 0 & 0 \\ 0& I \ebm}{I}\right)\tensor{\bbm 2i & 0 \\ 0 & I- L_0 \ebm}{I} \notag \\
&= \left(\delta_P(Z) \tensor{\bbm I & 0 \\ 0 & A \ebm}{I} + \tensor{\bbm 0 & 0 \\ 0& I \ebm}{I}\right)\tensor{\bbm -\frac{1}{2}i & 0 \\ 0 & \frac{1}{2}(I - iA) \ebm\inv}{I} \label{right}
\end{align}
as operators on $\hilbert$. Thus, upon combining \eqref{prefactor}, \eqref{left}, and \eqref{right} and pre- and post-multiplying by $2$ and $\frac{1}{2}$, we have established 
\begin{align}
M(Z) &= \tensor{\bbm -i & 0 \\ 0 & I - iA \ebm}{I}\left( \tensor{\bbm I & 0 \\ 0 & A \ebm}{I} -\delta_P(Z)\tensor{\bbm 0 & 0 \\ 0 & I \ebm}{I} \right)\inv \times \notag \\ 
&\hspace{1in} \left(\delta_P(Z) \tensor{\bbm I & 0 \\ 0 & A \ebm}{I} + \tensor{\bbm 0 & 0 \\ 0& I \ebm}{I}\right)\tensor{\bbm -i & 0 \\ 0 & I-iA \ebm\inv}{I}.
\end{align}
Thus, we have shown that $M$ is a type $4$ resolvent, and therefore that $h$ has a Nevanlinna representation of type $4$, i.e.
\[
h(Z) = \tensor{a}{I} + \tensor{v\ad}{I} M(Z) \tensor{v}{I}.
\] 

The converse follows from the fact that the imaginary part of $M_4$ is positive as was remarked in Definition \ref{type4res}.
\end{proof}

\subsubsection{Type 3}
\begin{proof}[Proof of Theorem \ref{type3asym}: Type 3]
(1) $\Rightarrow$ (3): Follows from Theorem \ref{oldnevreps}.

(3) $\Rightarrow$ (2) is trivial.

(2) $\Rightarrow$ (1): Suppose that condition (2) holds, that is $h$ is a free Pick function with
\[
\liminf_{s\to \infty} \IM \frac{1}{s} h(is\chi) = 0.
\]
As a Pick function, $h$ has a type $4$ Nevanlinna representation, that is there exist $a \in \R, \hilbert, \N \subset \hilbert,$ operators $A, Y$ on $\N^\perp$, an orthogonal decomposition $P$ of $\hilbert$ and a vector $v\in\hilbert$ such that 
\[
\tensor{a}{I} + \tensor{ v\ad}{I} M(Z) \tensor{v}{I},
\]
where
\begin{align*}
M(Z) &= \tensor{\bbm -i & 0 \\ 0 & 1 - iA \ebm}{I}\left( \tensor{\bbm 1 & 0 \\ 0 & A \ebm}{I} -\delta_P(Z)\tensor{\bbm 0 & 0 \\ 0 & 1 \ebm}{I} \right)\inv \times \notag \\ 
&\hspace{1in} \left(\delta_P(Z) \tensor{\bbm 1 & 0 \\ 0 & A \ebm}{I} + \tensor{\bbm 0 & 0 \\ 0& 1 \ebm}{I}\right)\tensor{\bbm -i & 0 \\ 0 & 1-iA \ebm\inv}{I},
\end{align*}
For $Z = is\chi$,
\[
\delta_P(Z) = \tensor{is}{1_\C},
\]
so for $s>0$, $M(is\chi)$ becomes
\begin{align*}
M(is\chi) &=  \tensor{\bbm -i & 0 \\ 0 & 1 - iA \ebm}{1}\left( \tensor{\bbm 1 & 0 \\ 0 & A \ebm}{1} -\tensor{is}{1}\tensor{\bbm 0 & 0 \\ 0 & 1 \ebm}{1} \right)\inv \times \notag \\ 
&\hspace{1in} \left(\tensor{is}{1} \tensor{\bbm 1 & 0 \\ 0 & A \ebm}{1} + \tensor{\bbm 0 & 0 \\ 0& 1 \ebm}{1}\right)\tensor{\bbm -i & 0 \\ 0 & 1-iA \ebm\inv}{1} \\
&= \bbm -i & 0 \\ 0 & 1 - iA \ebm\left( \bbm 1 & 0 \\ 0 & (A - is)\inv \ebm\right)  \left( \bbm is & 0 \\ 0 & 1 + is A \ebm\right)\bbm -i & 0 \\ 0 & 1-iA \ebm\inv \\
&= \bbm is & 0 \\ 0 & (1-iA)(A - is)\inv(1+isA)(1-iA)\inv \ebm.
\end{align*}
Now let $v_1 = P_\N v$ and $v_2 = P_{\N^\perp} v$. Then
\begin{align*}
&h(is\chi) = \tensor{a}{1} + \tensor{v\ad}{1} M(is\chi) \tensor{v}{1} \\
&= a + \bbm is & 0 \\ 0 & (1-iA)(A - is)\inv(1+isA)(1-iA)\inv \ebm \bpm v_1 \\ v_2 \epm \\
&= a + is v_1\ad  v_1 + v_2\ad(1-iA)(A - is)\inv(1+isA)(1-iA)\inv v_2 \\
&= a + is\norm{v_1}^2 + \ip{(1-iA)(A-is)\inv(1 + isA)(1 - iA)\inv v_2}{v_2}_{\N^\perp}.
\end{align*}
To compute $1/s \IM h(is\chi)$, we find
\begin{align*}
\frac{1}{s} \IM &\left(a + is\norm{v_1}^2 + \ip{(1-iA)(A - is)\inv(1+isA)(1-iA)\inv v_2}{v_2}\right) \\
&= \norm{v_1}^2 + \frac{1}{s}\IM \ip{(1-iA)(A - is)\inv(1+isA)(1-iA)\inv v_2}{v_2}\\
&\geq \norm{v_1}^2
\end{align*}
by Corollary 2.7 in \cite{aty13}. By hypothesis,
\begin{align*}
0 &= \liminf_{s \to \infty} \frac{1}{s} h(is\chi) \\
&= \liminf_{s \to \infty} \norm{v_1}^2 + \frac{1}{s}\IM \ip{(1-iA)(A - is)\inv(1+isA)(1-iA)\inv v_2}{v_2} \\ &\geq \norm{v_1}^2,
\end{align*}
and so $v_1 = 0$. We claim that with the Hilbert space $\N^\perp$, the positive decomposition $Y$ of $\N^\perp$ given by the compression of the orthogonal decomposition $P$ to $\N^\perp$, the operator $A$ on $\N^\perp$, the vector $v_2 \in \N^\perp$, and the real number $a$, we get that the compression of the type $4$ representation to $\tensor{\N^\perp}{\M}$ is a type $3$ representation for $h$.  Recall that $(\tensor{A}{I} - \delta_Y(Z))\inv$ is a well-defined, bounded operator on $\Pi^d$. Then
\begin{align*}
h(Z) &= \tensor{a}{I} + \tensor{v\ad}{I} M(Z) \tensor{v}{I} \\
&= \tensor{a}{I} + \tensor{ v\ad}{I} \tensor{\bbm -i & 0 \\ 0 & 1 - iA \ebm}{I}\left( \tensor{\bbm 1 & 0 \\ 0 & A \ebm}{I} -\delta_P(Z)\tensor{\bbm 0 & 0 \\ 0 & 1 \ebm}{I} \right)\inv \times \notag \\ 
&\hspace{1in} \left(\delta_P(Z) \tensor{\bbm 1 & 0 \\ 0 & A \ebm}{I} + \tensor{\bbm 0 & 0 \\ 0& 1 \ebm}{I}\right)\tensor{\bbm -i & 0 \\ 0 & 1-iA \ebm\inv}{I} \tensor{v}{I} \\
&= \tensor{a}{I} + \tensor{ v\ad}{I} \tensor{\bbm -i & 0 \\ 0 & 1 - iA \ebm}{I}\left( \tensor{\bbm 1 & 0 \\ 0 & A \ebm}{I} -\sum \tensor{\bbm 0 & B_i \\ 0 & Y_i \ebm}{Z^i} \right)\inv \times \notag \\ 
&\hspace{1in} \left(\sum \tensor{\bbm X_i & B_i A \\ B_i\ad & Y_i A \ebm}{Z^i} + \tensor{\bbm 0 & 0 \\ 0& 1 \ebm}{I}\right)\tensor{\bbm i & 0 \\ 0 & (1-iA)\inv \ebm}{I} \tensor{v}{I} \\
&= \tensor{a}{I} + \tensor{ v\ad}{I} \bbm \tensor{-i}{I} & 0 \\ 0 & \tensor{1 - iA}{I} \ebm \bbm 1 & \delta_B(Z) \left(\tensor{A}{I} - \delta_Y(Z)\right)\inv \\ 0 & \left(\tensor{A}{I} - \delta_Y(Z)\right)\inv \ebm \times \notag \\ 
&\hspace{1in} \bbm \delta_X(Z) & \delta_B(Z)\tensor{A}{I} \\ \delta_{B\ad}(Z) & \tensor{I}{I} + \delta_Y(Z)\tensor{A}{I} \ebm \bbm \tensor{i}{I} & 0 \\ 0 & \tensor{(1-iA)\inv}{I} \ebm \tensor{v}{I} \\
&= \tensor{a}{I} + \tensor{ v\ad}{I} \bbm \tensor{-i}{I} & -i\delta_B(Z)\left(\tensor{A}{I} - \delta_Y(Z)\right)\inv \\ 0 & \tensor{1 - iA}{I}\left(\tensor{A}{I} - \delta_Y(Z)\right)\inv \ebm \times \\
&\hspace{1in} \bbm i\delta_X(Z) & \delta_B(Z)\tensor{A(1- iA)\inv}{I} \\ i\delta_{B\ad}(Z) & \left(\tensor{I}{I} + \delta_Y(Z)\tensor{A}{I}\right)\tensor{(1 - iA)\inv}{I} \ebm \tensor{v}{I}. \\
\end{align*}
Since $v_1 = 0$, we can compress the rather unwieldy operator resulting from the multiplication to $\tensor{\N^\perp}{\M}$, which gives
\[
h(Z) = \tensor{a}{I} + \tensor{v_2\ad(1-iA)}{I}\left(\tensor{A}{I} - \delta_Y(Z)\right)\inv\left(\tensor{I}{I} + \delta_Y(Z)\tensor{A}{I}\right)\tensor{(1-iA)\inv v_2 }{I},
\]
that is, $h$ has a type $3$ Nevanlinna representation.
\end{proof}

\subsubsection{Type 2}
\begin{proof}[Proof of Theorem \ref{type2asym}: Type 2]
(1) $\Rightarrow$ (2): Follows from Theorem \ref{oldnevreps}.

(3) $\Rightarrow$ (2) is trivial.

(2) $\Rightarrow$ (1): Suppose that for $h \in \Pick_d$, 
\beq \label{cond2}
\liminf_{s \to \infty} s \IM h(is\chi) < \infty.
\eeq
This obviously implies that
\[
\liminf_{s \to \infty} \frac{1}{s} \IM h(is\chi) = 0,
\]
and so by Theorem \ref{type3asym} Type 3, there exist $a, \hilbert, Y, A,$ and $v$ so that $h$ has a type $3$ representation. As
\[
\delta_Y(is\chi) = \tensor{is}{1_\C},
\]
\begin{align*}
h(is\chi) &= a + \alpha\ad(1 - iA)(A- is)\inv(1+isA)(1-iA)\inv\alpha \\
&= a + \ip{(1 - iA)(A - is)\inv(1 + isA)(1 - iA)\inv v}{v}_\hilbert.
\end{align*}

Let $\nu_{v, v} = \nu$ be the scalar spectral measure for $A$. Then for $s > 0$,
\begin{align*}
s\,\IM h(is\chi) &= s\,\IM \int \frac{1 + ist}{t - si} \,\,\dd\nu(t) \\
&= \int \frac{s^2(1 + t^2)}{t - is}\,\,\dd\nu(t).
\end{align*}
As $s \to \infty,$ the integrand increases monotonically to $1 + t^2$. Then by \eqref{cond2}
\[
\int (1+t^2)\,\,\dd\nu(t) < \infty,
\]
and so by the Spectral Theorem
\[
\ip{(1+A^2)v}{v} < \infty,
\]
which gives $v \in \mathcal D(A)$. Therefore, by Theorem \ref{type32}, $h$ has a type $2$ representation.
\end{proof}

\subsubsection{Type 1}
\begin{proof}[Proof of \ref{type1asym} Type 1]

(1) $\Rightarrow$ (3) follows from \ref{oldnevreps}.

(3) $\Rightarrow$ (2) is trivial.

(2) $\Rightarrow$ (1): Suppose that 
\beq \label{type1ineq}
\liminf_{s \to infty} s \abs{h(is\chi)} < \infty.
\eeq
As 
\[
\liminf_{s \to \infty} s\,\IM h(is\chi) \leq \liminf{s \to \infty} s\abs{h(is\chi)},
\]
by Theorem \ref{type2asym} Type 2, $h$ has a Nevanlinna representation of type $2$, that is there exist $\hilbert, Y, A,$ and $\alpha \in \mathcal D(A)$ such that
\[
h(Z) = \tensor{a}{I} + \tensor{\alpha\ad}{I}\left(\tensor{A}{I} - \delta_Y(Z)\right)\inv\tensor{\alpha}{I}.
\]
It remains to show that $a = 0$. For $Z = is\chi$, 
\[
\delta_Y(is\chi) = \tensor{is}{1_\C}.
\]
Then 
\[
h(is\chi) = a + \ip{(A - is)\inv\alpha}{\alpha}_\hilbert.
\]
By \eqref{type1ineq}, there must exist a sequence $s_n \to \infty$ such that $h(is_n\chi) \to 0$. But on this sequence, 
\[
\RE h(is_n\chi) = a + \ip{A(A^2 + s_n^2)\inv\alpha}{\alpha} \to a,
\]
and therefore it must be the case that $a = 0$. Thus $h$ has a type $1$ representation.

\end{proof}


\bibliography{references}
\bibliographystyle{plain}

\end{document}